\documentclass[11pt]{article}

\usepackage[margin=1.0in]{geometry}
\usepackage[utf8]{inputenc}            % allow utf-8 input
\usepackage[T1]{fontenc}               % use 8-bit T1 fonts
\usepackage[hidelinks,backref=page]{hyperref}    	  \hypersetup{
	colorlinks,
	linkcolor={violet!50!pink},
	citecolor={violet!50!pink},
	urlcolor={violet!50!pink}
}
\usepackage[mathscr]{euscript}
\usepackage[T1]{fontenc}
\usepackage{doi}
\usepackage[version=4]{mhchem}
\usepackage{url, xurl}
\usepackage{booktabs}
\usepackage{nicefrac}
\usepackage{microtype}
\usepackage[dvipsnames,table]{xcolor} % colored text
\usepackage{xspace}
\usepackage{comment} %comment environment

\usepackage[numbers, sort&compress]{natbib}
\usepackage{amsmath, amssymb, amsfonts, stmaryrd, dsfont}
\usepackage{amsthm, thmtools, thm-restate}
\usepackage{mathtools, nccmath, extarrows}
\usepackage[shortlabels]{enumitem} %resume enumerate
\usepackage{subcaption}
\usepackage{scalerel}
\usepackage{lipsum}
\usepackage{graphicx}
\usepackage{epstopdf}
\usepackage{cleveref}

\usepackage{mathrsfs, mleftright,booktabs,soul}
\usepackage{algorithm, algpseudocode, algorithmicx}

\ifpdf
  \DeclareGraphicsExtensions{.eps,.pdf,.png,.jpg}
\else
  \DeclareGraphicsExtensions{.eps}
\fi
\crefformat{footnote}{#2\footnotemark[#1]#3}

\usepackage{stackengine}
\numberwithin{equation}{section}
\stackMath

\DeclareFontEncoding{LS1}{}{}
\DeclareFontSubstitution{LS1}{stix}{m}{n}

\makeatletter
  % the current math version is saved in \math@version
  \newcommand*\textmathversion{\csname textmv@\math@version\endcsname}
  \newcommand*\textmv@normal{m}
  \newcommand*\textmv@bold{b}
\makeatother

%%% JAT CHANGES
%%% November 2023
\usepackage{fourier}
\usepackage{bm}

\usepackage{titlesec}
\titleformat{\section}{\normalfont\scshape}{\thesection.}{1ex}{\centering}[]
\titlespacing*{\section}{0pt}{20pt}{6pt}
\titleformat{\subsection}[runin]{\normalfont\bfseries}{\thesubsection.}{6pt}{}[.]
\titlespacing{\subsection}{0pt}{6pt}{6pt}
\titleformat{\subsubsection}[runin]{\normalfont\itshape}{\thesubsubsection.}{6pt}{}[.]
\titlespacing{\subsubsection}{0pt}{6pt}{6pt}

%%%
%%%

\newcommand{\proofbox}{\vbox{\hrule height0.6pt\hbox{\vrule height1.3ex width0.6pt\hskip0.8ex\vrule width0.6pt}\hrule height0.6pt}}

\newtheorem{theorem}{Theorem}[section]
\newtheorem{proposition}[theorem]{Proposition}
\newtheorem{lemma}[theorem]{Lemma}
\newtheorem{corollary}[theorem]{Corollary}

\theoremstyle{definition}

\newtheorem{question}[theorem]{Question}

\newtheorem{property}[theorem]{Property}

% Editing

% Number systems

\newcommand{\real}{\mathbb{R}}
\newcommand{\complex}{\mathbb{C}}

\newcommand{\onevec}{\mathbb{1}}

% Linear algebra
\DeclareMathOperator{\tr}{tr}

\newcommand{\mat}[1]{\boldsymbol{#1}}
\renewcommand{\vec}[1]{\boldsymbol{#1}}
\DeclareMathOperator{\diag}{diag}
\newcommand{\lowrank}[1]{\llbracket #1 \rrbracket}

\DeclareMathOperator{\rank}{rank}
\newcommand{\Id}{\mathbf{I}}

% Matrices

% Probability

\DeclareMathOperator{\expect}{\mathbb{E}}
\DeclareMathOperator{\prob}{\mathbb{P}}

% Asymptotics
\newcommand{\order}{\mathcal{O}}

% General

\DeclareMathOperator*{\argmin}{argmin}
\newcommand{\set}[1]{\mathsf{#1}}

\newcommand{\diff}{{\mathrm{d}}}

\renewcommand{\hat}[1]{\widehat{#1}}

% Specialized notation for this document
\newcommand{\RPCholesky}{\textsc{RP\-Chol\-esky}\xspace}

\usepackage{listings}
\usepackage{color} %red, green, blue, yellow, cyan, magenta, black, white
\definecolor{mygreen}{RGB}{28,172,0} % color values Red, Green, Blue
\definecolor{mylilas}{RGB}{170,55,241}
\lstset{language=Matlab,%
    basicstyle=\ttfamily,
    breaklines=true,%
    morekeywords={matlab2tikz},
    keywordstyle=\color{blue},%
    morekeywords=[2]{1}, keywordstyle=[2]{\color{black}},
    identifierstyle=\color{black},%
    stringstyle=\color{mylilas},
    commentstyle=\color{mygreen},%
    showstringspaces=false,%without this there will be a symbol in the places where there is a space
    % numbers=left,%
    % numberstyle={\tiny \color{black}},% size of the numbers
    % numbersep=9pt, % this defines how far the numbers are from the text
    emph=[1]{for,end,break},emphstyle=[1]\color{blue}, %some words to emphasise
    % emph=[2]{...}, emphstyle=[2]{\color{red}},  
    captionpos=b,
}

\usepackage{amsopn}

% Customize the backref format
\renewcommand*{\backref}[1]{}
\renewcommand*{\backrefalt}[4]{%
	\ifcase #1 %
	(No citations.)% use this if no citations
	\or
	(Cited on page #2.)% use this if only one citation
	\else
	(Cited on pages #2.)% use this if multiple citations
	\fi
}
\newcommand*{\email}[1]{\href{mailto:#1}{\nolinkurl{#1}} }

\begin{document}

\title{Randomly pivoted Cholesky: \\ Practical approximation of a kernel matrix \\ with few entry evaluations\footnote{YC acknowledges support from the Caltech Kortschak Scholar program and the Courant Instructorship. ENE acknowledges support from the Department of Energy through Award DE-SC0021110.
JAT and RJW acknowledge support from the Office of Naval Research through BRC Award N00014-18-1-2363
and from the National Science Foundation through FRG Award 1952777.}}
\author{Yifan Chen\thanks{Courant Institute of Mathematical Sciences, New York University, NY 10012 USA (\email{yifan.chen@nyu.edu}).}
\and Ethan N. Epperly\thanks{Computing and Mathematical Sciences, California Institute of Technology, Pasadena, CA 91125 USA (\email{eepperly@caltech.edu}, \email{jtropp@caltech.edu}).} \and Joel A. Tropp\footnotemark[3] 
\and Robert J. Webber\thanks{Department of Mathematics, University of California San Diego, La Jolla, CA 92093 (\email{rwebber@ucsd.edu})}}

\date{}

\maketitle

\begin{abstract}
The randomly pivoted Cholesky algorithm (\RPCholesky) computes a factorized rank-$k$ approximation of an $N \times N$ positive-semidefinite (psd) matrix.
\RPCholesky requires only $(k + 1)N$ entry evaluations and $\order(k^2 N)$ additional arithmetic operations, and it can be implemented with just a few lines of code.  The method is particularly useful for approximating a kernel matrix.

This paper offers a thorough new investigation
of the empirical and theoretical behavior of this fundamental algorithm.
For matrix approximation problems that arise in scientific machine learning,
experiments show that \RPCholesky matches or beats the performance
of alternative algorithms.
Moreover, \RPCholesky provably returns low-rank approximations
that are nearly optimal. 
The simplicity, effectiveness, and robustness of \RPCholesky strongly support its use in scientific computing and machine learning applications.
\end{abstract}

\section{Motivation}
\label{sec:introduction}

Kernel methods~\cite{SS02} are a class of machine learning
tools for interpolation, regression, clustering, and summarization
of data.
For small to medium data sets (say, with fewer than $10^5$ points),
the literature contains evidence that kernel methods are
effective for many learning tasks in scientific
computing~\cite{SS02,rasmussen2005gaussian,RZMC11,STR+19,RSBU22,BDHO23}.
For particular problems, carefully designed kernel methods can compete
with or exceed the performance of neural networks~\cite{LSP+20,ADL+20,radhakrishnan2023mechanism,AJSW24a}.

Kernel methods distill information about the pairwise similarities
of $N$ data points into a dense positive-semidefinite
(psd) kernel matrix with dimensions $N \times N$.
We must compare two data points to determine each entry of
the kernel matrix, so it can be burdensome to compute
and store all $N^2$ entries.
To perform data analysis tasks with the kernel matrix,
we solve linear systems or least-squares problems,
or we perform eigenvalue decompositions.
With direct algorithms, these linear algebra primitives
require $\mathcal{O}(N^3)$ arithmetic.
This scaling makes it prohibitive %ly expensive
to apply kernel methods to the largest data sets.

Yet the situation is not hopeless.  Even for high-dimensional
data, the eigenvalues of the kernel matrix
can decay surprisingly quickly~\cite{williams2000effect,altschuler2023kernel}.
This phenomenon has a profound consequence for computation.
\textbf{When spectral decay is present, we can replace
the full kernel matrix with a low-rank approximation
to accelerate kernel methods without much loss of accuracy.}
This approach can be used to accelerate kernel interpolation \cite{abedsoltan2023large}, kernel ridge regression \cite{rudi2017falkon,meanti2020kernel,diaz2023robust}, and kernel spectral clustering \cite{fowlkes2004spectral}.
Indeed,
``low-rank kernel methods'' can run millions (!) of times faster
than direct methods that require a full decomposition
of the dense kernel matrix (see, e.g., \Cref{sec:spectral_clustering}).

With this context in view, we pose a computational question:
\textbf{What are the best algorithms for finding a low-rank
approximation of a large psd kernel matrix?}  Here are some
desiderata:

\begin{enumerate} \setlength{\itemsep}{0pt}
\item   \textbf{Entry evaluations.}  We would
like to compute a rank-$k$ approximation after revealing only
$\mathcal{O}(kN)$ entries of the kernel matrix, such as
the $k$ most salient columns.

\item   \textbf{Arithmetic and storage.}
The method should only expend $\mathcal{O}(k^2 N)$
additional arithmetic, which is the cost to orthogonalize
$k$ vectors of dimension $N$.
The method should return the approximation in factored form,
using only $\mathcal{O}(kN)$ storage.

\item   \textbf{Approximation quality.}
The error in the computed rank-$k$ approximation
should be competitive with the best
approximation that has rank $r$, where $r$ is a number not much smaller than $k$. 

\item   \textbf{Reliability, robustness, simplicity.}
The method should have consistent performance, and
it should succeed for all inputs.  The method should
be easy to implement, and it should not require the
user to adjust parameters.
\end{enumerate}

\textbf{The randomly pivoted Cholesky algorithm (\RPCholesky)
is a fundamental numerical method that enjoys all
four of these qualities.}  \RPCholesky enhances the
classic pivoted partial Cholesky method with an adaptive,
probabilistic rule for selecting the next pivot column.
We have found that this simple modification consistently
produces excellent low-rank matrix approximations, even
when alternative pivot rules fail.
See Algorithm~\ref{alg:rpcholesky} for pseudocode.

\begin{algorithm}[t]
  \caption{\RPCholesky}
  \label{alg:rpcholesky}
  \textbf{Input:} Psd matrix $\mat{A} \in \mathbb{C}^{N\times N}$; approximation rank $k$
  
  \textbf{Output:} Pivot set $\set{S} =\{s_1, \dots, s_k\}$; matrix $\mat{F} \in \mathbb{C}^{N\times k}$ defining Nystr\"om approximation $\mat{\hat{A}} = \mat{F}\mat{F}^*$
  \begin{algorithmic}
    \State Initialize $\mat{F} \leftarrow \mat{0}_{N\times k}$ and $\vec{d} \leftarrow \diag \mat{A}$ 
    \Comment{Evaluate diagonal of input matrix}
    \For{$i = 1$ to $k$}
    \State Sample pivot $s_i \sim \vec{d} / \sum_{j=1}^N \vec{d}(j)$
    \Comment{With probability proportional to residual diagonal}
    \State $\vec{g} \leftarrow \mat{A}(:,s_i)$
    \Comment{Evaluate column $s$ of input matrix}
    \State $\vec{g} \leftarrow \vec{g} - \mat{F}(:,1:i-1) \mat{F}(s_i,1:i-1)^*$
    \Comment{Remove overlap with previously chosen columns}
    \State $\mat{F}(:,i) \leftarrow \vec{g} / \sqrt{\vec{g}(s_i)}$
    \Comment{Update approximation}
    \State $\vec{d} \leftarrow \vec{d} - |\mat{F}(:,i)|^2$
    \Comment{Track diagonal of residual matrix}
    \State $\vec{d}\leftarrow \max\{\vec{d},\vec{0}\}$ \Comment{Ensure diagonal remains nonnegative}
    \EndFor
  \end{algorithmic}
\end{algorithm}

The \RPCholesky algorithm has a subtle history
(\Cref{sec:context}), but it is fair to say that
this method has never received the attention
that it deserves. % in computational practice.
Our purpose is to bring this powerful algorithm into the light.
We offer two main contributions:

\begin{enumerate} \setlength{\itemsep}{0pt}
\item   \textbf{Empirical performance.}  We provide the first
numerical evidence that \RPCholesky is a competitive technique
for approximating large kernel matrices that arise in
scientific machine learning.
Compared to alternative algorithms, \RPCholesky has greater
speed, accuracy, or reliability.

\item   \textbf{Rigorous error bounds.}  We develop a new
theoretical analysis that describes the performance of the
\RPCholesky algorithm, as given in \Cref{alg:rpcholesky}.
Our analysis gives a clear picture of
why \RPCholesky is effective.
\end{enumerate}

\noindent
In summary, we present a slate of results to suggest that
\RPCholesky is the leading method for low-rank approximation
of large, psd kernel matrices.  The effectiveness, robustness,
and simplicity of \RPCholesky strongly recommend it for
modern applications in scientific computing and machine learning.

\subsection{Plan for paper}

The rest of the paper is organized as follows.
\Cref{sec:motivation} introduces \RPCholesky and its basic properties,
and \Cref{sec:context} outlines history and related work.
\Cref{sec:rpc} applies \RPCholesky to kernel ridge regression and kernel spectral clustering problems, and \Cref{sec:theoretical-analysis} establishes error bounds. \Cref{sec:conclusion} offers some conclusions.

\subsection{Notation}

The elements of a matrix $\mat{A} \in \mathbb{C}^{N \times N}$ are written $[\mat{A}(i,j)]_{1 \leq i,j \leq N}$, while the submatrices of $\mat{A}$ are expressed using MATLAB notation.
For example, $\mat{A}(:,i)$ represents the $i$th column of $\mat{A}$ and $\mat{A}(\set{S},:)$ denotes the submatrix of $\mat{A}$ with rows indexed by the set $\set{S}$.
The conjugate transpose of a (rectangular) matrix $\mat{F}$ is denoted as $\mat{F}^*$, and the Moore--Penrose pseudoinverse is $\mat{F}^{\dagger}$.  We write $\mat{\Pi}_{\mat{F}}$ for the orthogonal projector onto the column span of $\bm{F}$.

The function $\lambda_i(\mat{A})$ outputs the $i$th largest eigenvalue of a psd matrix $\mat{A}$.
The symbol $\preceq$ denotes the psd order on Hermitian matrices: $\mat{H} \preceq \mat{A}$ if and only if $\mat{A} - \mat{H}$ is psd.
We say $\mat{A}$ is rank-$r$ when $\rank \mat{A} \leq r$.  The symbol
$\lowrank{\mat{A}}_r$ refers to a best rank-$r$ approximation of a psd matrix $\mat{A}$, which can be obtained from an $r$-truncated eigendecomposition.
A best rank-$r$ approximation may not be unique, so we employ this notation
only in contexts where it leads to an unambiguous interpretation.

\section{Randomly Pivoted Cholesky} \label{sec:motivation}

In large-scale kernel methods, it is expensive to evaluate
all the entries of the psd kernel matrix.
As a cheaper alternative, we can try to approximate the
kernel matrix by adaptively evaluating a small number of
the columns.
In \Cref{sec:column_psd}, we introduce the \textit{column Nystr\"om approximation}, which is optimal among all approximations using a given set of columns.
In \Cref{sec:partial_cholesky}, we describe the \emph{pivoted partial Cholesky algorithm} as an efficient strategy for forming a column Nystr\"om approximation.
In \Cref{sec:pivot-rules}, we design a rule for selecting the columns (aka ``pivots'') in the pivoted partial Cholesky algorithm that leads to the \RPCholesky algorithm.
In \Cref{sec:compare}, we summarize simple numerical experiments 
with \RPCholesky.
Last, in \Cref{sec:theoretical}, we present a new error bound for \RPCholesky that explains why the method is effective.

\subsection{Nystr\"om approximation of a psd matrix} \label{sec:column_psd}

Let $\mat{A} \in \complex^{N \times N}$ be an arbitrary psd matrix
(not necessarily arising from a kernel computation).  To approximate the
matrix using a given subset $\set{S}$ of the column indices, we can employ the \emph{column Nystr\"om approximation} \cite[\S19.2]{MT20}:
\begin{equation}
\label{eq:nystrom}
\mat{\hat{A}} \coloneqq %\mat{\hat{A}}\langle \set{S} \rangle \coloneqq
\mat{A}(:, \set{S}) \mat{A}(\set{S}, \set{S})^{\dagger} \mat{A}(\set{S}, :)
\quad\text{where}\quad \set{S} \subseteq \{1,\ldots,N\}.
\end{equation}
Since psd matrices are self-adjoint, we note that $\mat{A}(\set{S}, :) = \mat{A}(:, \set{S})^*$.
When the set $\set{S}$ contains $k$ column indices, the Nystr\"om approximation $\mat{\hat{A}}$ yields a rank-$k$ psd approximation with the following desirable properties:
\begin{enumerate}\setlength{\itemsep}{0pt}
\item   The Nystr{\"o}m approximation $\mat{\hat{A}}$ agrees with the target matrix $\mat{A}$ in the distinguished columns.  That is, $\mat{\hat{A}}(:, \set{S}) = \mat{A}(:,\set{S})$.

\item   The range of the Nystr{\"o}m approximation $\mat{\hat{A}}$ coincides with the span of the distinguished columns: $\operatorname{range}(\mat{\hat{A}}) = \operatorname{range}(\mat{A}(:,\set{S}))$.

\item   With respect to the psd order, the Nystr{\"o}m approximation $\mat{\hat{A}}$ satisfies the bounds $\mat{0} \preceq \mat{\hat{A}} \preceq \mat{A}$. 
\end{enumerate}

\noindent
In fact, the Nystr{\"o}m approximation is the maximal psd matrix that satisfies properties (2) and (3).  See~\cite[Thm.~5.3]{And05:Schur-Complements} for a rigorous statement.

We measure the quality of a Nystr{\"o}m approximation $\mat{\hat{A}}$
using the \textit{trace-norm error}:
\begin{equation} \label{eqn:trace-error}
\tr( \mat{A} - \mat{\hat{A}} ).
\end{equation}
Since $\mat{\hat{A}} \preceq \mat{A}$,
the trace-norm error is always nonnegative.
Other norms are possible, but the trace-norm error is especially meaningful in the kernel learning context \cite[\S 5.2.4]{JMLR:v24:22-0937}.
Our goal is to find a set $\set{S}$ of $k$ column indices
that makes the trace-norm error as small as possible.

\subsection{The pivoted partial Cholesky algorithm} \label{sec:partial_cholesky}

We can efficiently compute a Nystr\"om approximation \eqref{eq:nystrom} via the \emph{pivoted partial Cholesky algorithm}, presented as \Cref{alg:cholesky}.
\begin{algorithm}[t]
  \caption{Pivoted partial Cholesky algorithm}
  \label{alg:cholesky}
  \textbf{Input:} Psd matrix $\mat{A} \in \mathbb{C}^{N\times N}$; approximation rank $k$
  
  \textbf{Output:} Pivot set $\set{S} = \{s_1, \dots, s_k\}$ and matrix $\mat{F} \in \mathbb{C}^{N\times k}$ defining Nystr\"om approximation $\mat{\hat{A}} = \mat{F}\mat{F}^*$
  \begin{algorithmic}
    \State Initialize $\mat{F} \leftarrow \mat{0}_{N\times k}$
    \For{$i = 1$ to $k$}
    \State Select a pivot $s_i \in \{1, \ldots, N\}$
    \Comment{See \Cref{sec:pivot-rules} for pivot rules}
    \State $\vec{g} \leftarrow \mat{A}(:,s_i)$
    \Comment{Evaluate the $s_i$ column of the input matrix $\mat{A}$}
    \State $\vec{g} \leftarrow \vec{g} - \mat{F}(:,1:(i-1)) \mat{F}(s_i,1:(i-1))^*$
    \Comment{Remove the influence of previously chosen columns}
    \State $\mat{F}(:,i) \leftarrow \vec{g} / \sqrt{\vec{g}(s_i)}$
    \EndFor
  \end{algorithmic}
\end{algorithm}

Conceptually, the algorithm begins with an initial approximation $\mat{\hat{A}}^{(0)} = \mat{0}$ and an initial residual $\mat{A}^{(0)} = \mat{A}$.
At each step $i= 1,2, 3, \dots, k$, we adaptively select a new column index
$s_i \in \{1, \dots, N\}$, called a \textit{pivot}, using some pivot rule (\Cref{sec:pivot-rules}).
Then we evaluate the $s_i$ column $\mat{A}^{(i-1)}(:, s_i)$
of the current residual, and we use this column to update
the approximation and the residual:
\begin{equation} \label{eqn:chol-updates}
\begin{aligned}
\mat{\hat{A}}^{(i)} &= \mat{\hat{A}}^{(i-1)} + \frac{\mat{A}^{(i-1)}(:,s_i)\mat{A}^{(i-1)}(s_i,:)}{\mat{A}^{(i-1)}(s_i, s_i)}; \\
{\mat{A}}^{(i)} &= {\mat{A}}^{(i-1)} - \frac{\mat{A}^{(i-1)}(:,s_i)\mat{A}^{(i-1)}(s_i,:)}{\mat{A}^{(i-1)}(s_i, s_i)}.
\end{aligned}
\end{equation}
We can also track the diagonal of the residual $\mat{A}^{(i)}$ using
the formula
\begin{equation} \label{eqn:cholesky-diag}
\diag( \mat{A}^{(i)} ) = \diag( \mat{A}^{(i-1)} ) - \frac{1}{\mat{A}^{(i-1)}(s_i, s_i)} \cdot \vert \mat{A}^{(i-1)}(:, s_i) \vert^2.
\end{equation}
The function $\vert \cdot \vert^2 : \complex^N \to \real_+^N$
returns the entrywise squared magnitude of a vector.
This observation supports stopping rules based on functions
of the diagonal entries of the residual, such as its trace.

The practical implementation of pivoted partial Cholesky (\Cref{alg:cholesky})
maintains the approximation in factored form:
$\mat{\hat{A}}^{(i)} = \mat{F}(:, 1:i) \mat{F}(:,1:i)^*$
where $\mat{F} \in \complex^{N \times k}$.  We generate
the columns of the factor $\mat{F}$ sequentially.
The $i$th column $\mat{F}(:, i)$ is obtained
by evaluating the $s_i$ column $\mat{A}(:, s_i)$
of the input matrix and removing the influence of
the previously selected columns.

The following classic result~\cite[p.~24]{HZ05:Basic-Properties}
connects the pivoted partial Cholesky algorithm with the
column Nystr{\"o}m approximation.

\begin{property}[Pivoted partial Cholesky computes a Nystr{\"o}m approximation]
    The pivoted partial Cholesky algorithm (\Cref{alg:cholesky}) with psd input matrix $\mat{A}$ and with pivot set $\set{S} = \{s_1,\ldots,s_k\}$ returns the column Nystr\"om approximation $\mat{\hat{A}} = \mat{A}(:, \set{S}) \mat{A}(\set{S}, \set{S})^{\dagger} \mat{A}(\set{S}, :)$ in the factorized form $\mat{\hat{A}} = \mat{FF}^*$.
\end{property}

\noindent
Indeed, at each step of the pivoted partial Cholesky algorithm, $\mat{\hat{A}}^{(i)}$ is the Nystr{\"o}m approximation
of $\mat{A}$ using the columns indexed by $\{s_1, \dots, s_i\}$.

What are the computational costs?  For $k$ steps of the pivoted partial Cholesky algorithm, we evaluate $kN$ entries of the input matrix $\mat{A}$, plus any other entries required to implement the pivot rule.  We expend $\mathcal{O}(k^2 N)$ additional
arithmetic operations, and the algorithm needs $\mathcal{O}(kN)$ storage.

In practice, we typically run the pivoted partial Cholesky method until the trace-norm error
falls below a specified threshold:
$\tr (\mat{A} - \mat{\hat{A}} ) \leq \eta \cdot \tr \mat{A}$.
This modification requires an unpredictable number of steps,
but it controls the error level and ensures
that $\mat{\hat{A}}$ can be reliably used in place
of $\mat{A}$ for downstream computations.

\subsection{Pivot selection rules}
\label{sec:pivot-rules}

In the pivoted partial Cholesky algorithm, we select a new pivot at each step.
Although there are several natural strategies, it has long remained unclear
how to quickly and reliably select pivots that result in matrix approximations
that control the trace-norm error~\eqref{eqn:trace-error}.

We have already seen that we can track the diagonal of the residual matrix via~\eqref{eqn:cholesky-diag}.
One principled approach for pivot selection is to exploit the information
contained in the diagonal.  Indeed, the diagonal entries of a psd matrix $\mat{A} \in \mathbb{C}^{N \times N}$ are nonnegative, and they control the off-diagonal entries via the inequality
\begin{equation*}
    |\mat{A}(i,j)| \leq \sqrt{\mat{A}(i,i) \mat{A}(j,j)} \quad \text{for each } 1 \leq i,j \leq N.
\end{equation*}
Thus, a large diagonal entry $\mat{A}(j,j)$ indicates that column $j$ might contain large-magnitude entries.
By eliminating such a column, we can hope to substantially reduce the trace of the residual at each step.  We outline several established strategies based on this intuition.

\subsubsection{Greedy pivoting}

Because of the significance of large diagonal entries,
we might be tempted to use a \emph{greedy} pivoting strategy.
At step $i$ of the pivoted partial Cholesky method, the greedy method selects the pivot $s_{i}$ by finding the position of the largest diagonal entry of the residual matrix $\mat{A}^{(i-1)}$,
with ties broken arbitrarily.
In symbols, the greedy pivot rule is
\begin{equation}
\label{eq:greedy}
    s_{i} \in \argmin\nolimits_{1 \leq j \leq N}\ \mat{A}^{(i-1)}(j, j).
\end{equation}
This greedy pivoting strategy has long been used in scientific computing and kernel machine learning, under the name ``Cholesky with complete pivoting'' \cite{high90c}.
The strategy is entirely based on \emph{exploiting} the large diagonal entries without \emph{exploring} any smaller ones.
The overemphasis on exploitation makes the greedy method brittle,
as this 
algorithm is often derailed by the presence of outlier columns.

\subsubsection{Uniform random pivoting}

The opposite strategy from greedy pivoting
is to sample each pivot
uniformly at random:
\begin{equation*}
    s_{i} \sim \textsc{uniform}\{1, \ldots, N\} \qquad \text{for each } i = 1, \ldots, k.
\end{equation*}
If we select a pivot that we have already seen, we can draw a sample again.
Uniform sampling has been popular in kernel machine learning since the works of Williams \& Seeger \cite{WS00}
and Drineas \& Mahoney \cite{DM05}.
% (See also the work of Drineas \& Mahoney \cite{DM05}, whose approach reduces to uniform sampling in the case in which all diagonal entries are equal.)
Uniform sampling has the reverse problem from the greedy method: it randomly \textit{explores} without \textit{exploiting} any information from the diagonal.
As a result, uniform sampling leads to a poor approximation when there are small sets of columns that are distinct from all the others.

\subsubsection{Adaptive random pivoting}

This paper advocates for a third way.  The randomly pivoted Cholesky (\RPCholesky) algorithm balances \emph{exploration} of the columns with \emph{exploitation} of the information available from the diagonal.  At the $i$th step, \RPCholesky adaptively samples the pivot $s_{i}$ according to the probability distribution that is proportional to the diagonal entries of the current residual $\mat{A}^{(i-1)}$:
    \begin{equation}
        \label{eq:sampling_dist}
        \mathbb{P}\{s_{i} = j\} = \frac{\mat{A}^{(i-1)}(j, j)}{\tr \mat{A}^{(i-1)}}
        \quad \text{for } j = 1, \ldots, N.
    \end{equation}
\Cref{alg:rpcholesky} contains a simple implementation of \RPCholesky.
See \Cref{sec:context} for the history and some related algorithms.

\subsubsection{Pivoting with a Gibbs distribution}

In retrospect, we realize that the greedy method, uniform sampling, and \RPCholesky are connected.
Consider the pivot rule that selects the next pivot $s_{i}$ from a Gibbs probability distribution that is proportional to the diagonal entries of the residual matrix $\mat{A}^{(i-1)}$, after raising them to a power $\beta \in [0, \infty]$:
\begin{equation}
\label{eq:parametric_family}
    \mathbb{P}\{s_{i} = j\} = \frac{\mat{A}^{(i-1)}(j,j)^\beta}{\sum_{k=1}^N \mat{A}^{(i-1)}(k,k)^\beta}
    \quad \text{for each $j = 1, \ldots, N$.}
\end{equation}
The greedy method arises from the $\beta \rightarrow \infty$ limit (``zero temperature''), while uniform sampling arises from the $\beta \rightarrow 0$ limit (``infinite temperature'').  \RPCholesky takes the intermediate value $\beta = 1$ (``not too hot, not too cold'').
Our analysis (\Cref{sec:theoretical-analysis})
proves that \RPCholesky ($\beta=1$) yields rigorous error bounds,
while we provide examples (\Cref{sec:related-work}) where
the extreme strategies ($\beta \to 0$ and $\beta \to \infty$) fail.
The literature contains empirical evidence that values $\beta \notin \{ 0,1,\infty \}$ can be effective for some matrices \cite{Ste24}.

\subsection{Numerical results: Illustrative examples}
\label{sec:compare}

\begin{figure}[t]
    \centering
    \includegraphics[width=0.98\textwidth]{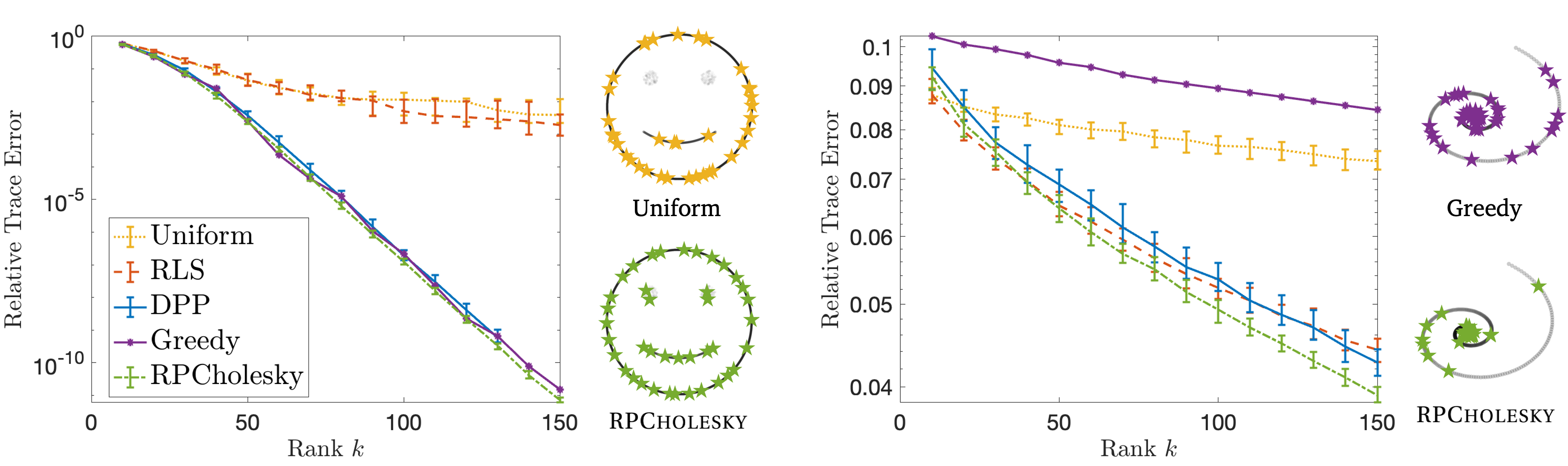}
    
    \caption{\textbf{Rank-$k$ approximation of Gaussian kernel matrices.} Median relative trace-norm error $\tr \bigl(\mat{A} - \mat{\hat{A}}^{(k)}\bigr) \slash \tr \mat{A}$ and 20-80\% quantile bars for several Nystr\"om-based column approximation methods for \textbf{Smile} (\emph{left}) and \textbf{Spiral} (\emph{right}) examples.
    Selected pivots (colored stars) and data points (gray circles) for uniform, greedy, and \RPCholesky methods are shown next to each panel.
    \label{fig:comparison}}
\end{figure}

In this section, we present two stylized examples to highlight the benefits of
the \RPCholesky method and the potential weaknesses of some other matrix approximation
algorithms.
Real-world examples appear in \Cref{sec:testbed,sec:rpc}.

We applied several column Nystr\"om approximation schemes to two kernel matrices:
\begin{enumerate}
    \item \textbf{Smile:} A Gaussian kernel matrix constructed from $10^4$ data points depicting a smile in $\real^2$.
    The smile is located in $[-10, 10] \times [-10, 10]$ and the kernel bandwidth is $\sigma = 2.0$.
    The eyes are constructed from $10^2$ points, making them easy to miss for certain sampling methods.
    \item \textbf{Spiral:} A Gaussian kernel matrix constructed from $10^4$ data points in $\real^2$ depicting the logarithmic spiral $({\rm e}^{0.2t}\cos t,{\rm e}^{0.2t}\sin t)$ for non-equispaced parameter values $t \in [0, 64]$.
    The kernel bandwidth is $1000$, making the outer edge of the spiral a region of outliers in the data.
\end{enumerate}
For each data set, \Cref{fig:comparison} tracks the mean relative trace-norm error $\tr(\mat{A}-\mat{\hat{A}})/\tr(\mat{A})$ over $100$ independent trials.
The pivots selected by different Nystr\"om-based column selection methods with $k = 40$ are shown next to each panel.
See \Cref{app:numerical_details} for additional computational details.

These tests bring to light the failure modes of uniform sampling and the greedy method.
Uniform sampling fails to select the pivots representing less populated regions of the data space,
such as the eyes in the \textbf{Smile} example.
The greedy method heavily emphasizes outliers,
leading to poor approximation accuracy in the \textbf{Spiral} example.
By contrast, \RPCholesky avoids these failure modes and achieves high accuracy for both test problems.

\Cref{fig:comparison} also evaluates two other randomized column selection schemes for Nystr\"om approximation: \emph{determinantal point process} (DPP) sampling \cite{DM21a} and \emph{ridge leverage score} (RLS) sampling \cite{MM17,AM15,RCCR18} (using the RRLS algorithm \cite{MM17}).
% (see also \cite{AM15,RCCR18}).
These methods offer strong theoretical guarantees, yet existing samplers are complicated and 
%prone to fail.
require care in implementation to avoid failure.
In practice, they also require far more entry evaluations than \RPCholesky to achieve the same approximation quality.

% \begin{figure}[t]
%     \centering
%     \includegraphics[width = .49\textwidth]{figures/entries_smile.png}
%     \hfill
%     \includegraphics[width = .49\textwidth]{figures/entries_spiral.png}
    
%     \caption{\textbf{Computational cost of Nystr{\"o}m methods.}
%     Entry evaluation counts for several Nystr\"om methods
%     applied to the \textbf{Smile} (\textit{left}) and \textbf{Spiral} (\textit{right}) matrices.
%     The \texttt{vfx} DPP sampler failed in at least one of our $100$ trials for $k\ge 50$ on the \textbf{Smile} matrix and for only worked for $k=20$ for the \textbf{Spiral} matrix.}
%     \label{fig:entries}
% \end{figure}

Computational cost is another important difference between various Nystr\"om column selection methods.
One simple measure of cost is the total number of entry evaluations required to generate the pivots and then form the Nystr\"om approximation.
Uniform sampling is the cheapest Nystr\"om method, requiring just $kN$ entry evaluations.
The greedy method and \RPCholesky follow closely behind, requiring $(k+1)N$ entry evaluations.
For the examples in \Cref{fig:comparison}, RLS sampling requires roughly $3kN$ entry evaluations and DPP sampling (using the \texttt{vfx} sampler \cite{GPBV19}) requires between $80kN$ and $200kN$ entry evaluations,
making these latter two methods comparatively expensive.

RLS sampling and DPP sampling exhibit other performance issues in addition to the high cost.
RLS sampling often fails to provide a high-quality approximation for the $\textbf{Smile}$ kernel matrix.
With an approximation rank of $k = 40$, there is a $95\%$ chance of missing both eyes, which is better than the $99.6\%$ chance of missing both eyes with uniform sampling, but still indicates a failure mode for the RLS algorithm.
DPP sampling using the \texttt{vfx} algorithm
\cite{GPBV19}
often fails to produce any output, generating an error that the matrix is close to rank-deficient.
With existing software, it is necessary to switch to a slower
\texttt{GS} sampler \cite{GPBV19} based on a complete eigendecomposition of the matrix $\mat{A}$.
Even the \texttt{GS} sampler fails for the \textbf{Smile} kernel matrix for $k \geq 140$.

\subsection{Numerical results: Real data} \label{sec:testbed}

\begin{table}[t]
    \centering

\begin{tabular}{rcccccc}
\toprule
                                             & Unif    & RLS     & Greedy  & \textsc{B-RPChol} & \textsc{RPChol} & Opt     \\ \midrule
\texttt{sensit\_vehicle}        & 1.57e-1 & 1.40e-1 & 2.07e-1 & \textbf{1.37e-1}               & \textbf{1.37e-1}             & 8.77e-2 \\
\texttt{yolanda}                & 1.46e-1 & 1.39e-1 & 2.08e-1 & 1.37e-1                        & \textbf{1.36e-1}             & 8.41e-2 \\
\texttt{YearPredictionMSD}      & 1.30e-1 & 1.20e-1 & 1.73e-1 & 1.18e-1                        & \textbf{1.16e-1}             & 6.79e-2 \\
\texttt{w8a}                    & 1.42e-1 & 1.17e-1 & 1.91e-1 & 1.14e-1                        & \textbf{1.05e-1}             & 6.09e-2 \\
\texttt{MNIST}                  & 1.21e-1 & 1.10e-1 & 1.67e-1 & 1.07e-1                        & \textbf{1.06e-1}             & 5.83e-2 \\
\texttt{jannis}                 & 1.11e-1 & 1.10e-1 & 1.28e-1 & \textbf{1.09e-1}               & \textbf{1.09e-1}             & 5.45e-2 \\
\texttt{HIGGS}                  & 6.73e-2 & 6.31e-2 & 8.51e-2 & 6.14e-2                        & \textbf{6.07e-2}             & 2.96e-2 \\
\texttt{connect\_4}             & 5.81e-2 & 5.07e-2 & 6.48e-2 & 4.85e-2                        & \textbf{4.81e-2}             & 2.25e-2 \\
\texttt{volkert}                & 5.35e-2 & 4.42e-2 & 5.92e-2 & 4.26e-2                        & \textbf{4.17e-2}             & 1.99e-2 \\
\texttt{creditcard}             & 4.83e-2 & 3.71e-2 & 5.77e-2 & 3.26e-2                        & \textbf{3.01e-2}             & 1.31e-2 \\
\texttt{Medical\_Appointment}   & 1.74e-2 & 1.43e-2 & 1.92e-2 & 1.31e-2                        & \textbf{1.29e-2}             & 4.59e-3 \\
\texttt{sensorless}             & 1.20e-2 & 7.78e-3 & 8.70e-3 & 8.23e-3                        & \textbf{5.80e-3}             & 2.11e-3 \\
\texttt{ACSIncome}              & 9.93e-3 & 5.55e-3 & 8.35e-3 & 4.26e-3                        & \textbf{4.02e-3}             & 1.27e-3 \\
 \texttt{Airlines\_DepDelay\_1M} & 4.19e-3 & 2.37e-3 & 2.64e-3 & 1.93-3                         & \textbf{1.78e-3}             & 5.08e-4 \\
\texttt{covtype\_binary}        & 9.10e-3 & 2.12e-3 & 1.41e-3 & 1.26e-3                        & \textbf{1.04e-3}             & 2.97e-4 \\
\texttt{diamonds}               & 1.31e-3 & 2.40e-4 & 1.12e-4 & 1.70e-4                        & \textbf{5.85e-5}             & 1.30e-5 \\
\texttt{hls4ml\_lhc\_jets\_hlf} & 3.78e-4 & 7.30e-5 & 6.68e-5 & 5.36e-5                        & \textbf{4.38e-5}             & 1.04e-5 \\
\texttt{ijcnn1}                 & 3.91e-5 & 3.09e-5 & 2.86e-5 & 2.62e-5                        & \textbf{2.13e-5}             & 4.67e-6 \\
\texttt{cod\_rna}               & 6.00e-4 & 1.36e-5 & 9.19e-6 & 1.37e-5                        & \textbf{5.05e-6}             & 9.86e-7 \\
\texttt{COMET\_MC\_SAMPLE}      & 3.65e-3 & 2.44e-7 & 1.2e-10 & 6.43e-5                        & \textbf{4.3e-11}             & 3.5e-12 \\ \bottomrule
\end{tabular}
    \caption{\textbf{Comparison of Nystr\"om methods.} Relative trace error of rank-$1000$ Nystr\"om approximation of kernel matrices produced by the RLS, uniform, \RPCholesky, greedy, and block \RPCholesky methods on a testbed of examples.
    The error of the optimal rank-$1000$ approximation is shown for reference, and we report the median error of ten trials.
    \RPCholesky achieves the smallest approximation error for every test problem.}
    \label{tab:many-matrices}
\end{table}

To confirm that the findings of \Cref{sec:compare} extend to real data sets, we compare different Nystr\"om methods on a testbed of kernel matrices formed from twenty data sets in the LIBSVM \cite{chang2011libsvm}, OpenML \cite{vanschoren2014openml}, and UCI \cite{Dua:2019} repositories.
These examples are cataloged in \cite[Tab.~1]{diaz2023robust}.
We subsample each data set to 
%at most 
$N=10^4$ points, standardize each feature, and form the Gaussian kernel matrix with bandwidth $\sigma = \sqrt{d}$, where $d$ is the number of features.
(See \Cref{sec:kernel_basics} for a primer on kernel matrices.)
We omit DPP sampling due to its high computational cost, and we include the block \RPCholesky method (introduced below in \Cref{sec:block}) with block size $T=100$.
For each method, we report the median relative trace error over ten trials.

Results are shown in \Cref{tab:many-matrices}.
\RPCholesky achieves the lowest trace error on every problem in the testbed.
It achieves $8\times 10^7$ and $5\times 10^3$ times lower error than uniform and RLS sampling for the data set \texttt{COMET\_MC\_SAMPLE} with the fastest spectral decay.
\RPCholesky achieves a more modest $3\times$ improvement over the greedy method in this example.

\subsection{Theoretical results} \label{sec:theoretical}

Given the excellent empirical performance of the \RPCholesky algorithm, it is natural to seek a more rigorous explanation.
We present the first proof that \RPCholesky (\Cref{alg:rpcholesky}) with the standard parameter choices attains error bounds that are nearly optimal within the class of column Nystr{\"o}m approximations.  See \Cref{sec:rpqr-theory} and \Cref{sec:related-work} for prior theoretical work.

Let $\mat{A}$ be a psd target matrix, and let $\mat{\hat{A}}$ be a rank-$k$ column Nystr{\"o}m approximation of the form~\eqref{eq:nystrom}.
Then it is appropriate to compare the approximation error, $\tr(\mat{A} - \mat{\hat{A}})$,
against the error $\tr(\mat{A} - \lowrank{\mat{A}}_r)$ attained by a best rank-$r$ psd approximation $\lowrank{\mat{A}}_r$ where the parameter $r \leq k$.

A Nystr{\"o}m approximation $\mat{\hat{A}}$ using $k$ randomly chosen columns
is called an \textit{$(r, \varepsilon)$-approximation} of the target matrix $\mat{A}$ when
\begin{equation} \label{eq:1+eps}
\expect \tr( \mat{A} - \mat{\hat{A}} ) \leq (1 + \varepsilon) \cdot \tr( \mat{A} - \lowrank{\mat{A}}_r )
\end{equation}
for parameters $r \in \mathbb{N}$ and $\varepsilon > 0$.
The expectation averages over the random choice of columns (e.g., the random pivots in \RPCholesky).
Our theory addresses the following question:

\begin{question}
How many columns $k$ are sufficient to guarantee that a randomized column Nystr{\"o}m method attains
an $(r, \varepsilon)$-approximation~\eqref{eq:1+eps} for every $N \times N$ psd input matrix?
\end{question}

To achieve an $(r, \varepsilon)$-approximation for a worst-case matrix,
a column Nystr{\"o}m approximation must use at least $k \geq r \slash \varepsilon$
columns.  For example, see \Cref{thm:lower-bound}.
We will establish that \RPCholesky achieves an $(r,\varepsilon)$-approximation
for every psd matrix after accessing only a little more than $r \slash \varepsilon$ columns.  Here is a partial statement of our main result (\Cref{thm:main_bound}):

\begin{theorem}[Randomly pivoted Cholesky: simplified bound] \label{thm:simple_bound}
Fix $r \in \mathbb{N}$ and 
$\varepsilon > 0$, and let $\mat{A}$ be a psd matrix.
The column Nystr\"om approximation $\mat{\hat{A}}^{(k)}$ produced by \RPCholesky (\Cref{alg:rpcholesky})
attains the error bound~\eqref{eq:1+eps}
provided that the number of columns, $k$, satisfies 
\begin{equation} \label{eq:rpc-bd-intro}
    k \geq \frac{r}{\varepsilon} + r \log\biggl(\frac{1}{\varepsilon \eta}\biggr)
    \quad\text{where}\quad
    \eta \coloneqq \frac{\tr(\mat{A} - \lowrank{\mat{A}}_r)}{\tr(\mat{A})}.
\end{equation}
\end{theorem}

\begin{table}[t]
  \centering
  \renewcommand{\arraystretch}{1.1}
  \begin{tabular}{ccc}
    \toprule
    Method & Number $k$ of columns to achieve \eqref{eq:1+eps} & Reference\\ \midrule
    Greedy method & $(1 - (1 + \varepsilon)\eta) N$ & \Cref{thm:greedy} \\
    Uniform sampling* & $\frac{r-1}{\varepsilon \eta} + \frac{1}{\varepsilon}$ & \cite{FKV04}, \Cref{thm:diagonal} \\
    DPP sampling & $\frac{r}{\varepsilon} + r - 1$ &
    \cite{BW09,GS12}, \Cref{thm:near-optimal}
    \\
    RLS sampling${}^\dagger$ & $65 \bigl(\frac{r}{\varepsilon} + r\bigr) \log\bigl(\frac{4}{\eta} \bigl(\frac{r}{\varepsilon} + r\bigr)\bigr)$ & \Cref{thm:musco} \\
    \midrule
    \RPCholesky & $\frac{r}{\varepsilon} + r \log \bigl(\frac{1}{\varepsilon \eta}\bigr)$ 
    & \Cref{thm:main_bound}  \\
    \RPCholesky & $\frac{r}{\varepsilon} + r + r \log_+ \bigl(\frac{2^r}{\varepsilon}\bigr)$ 
    & \Cref{thm:main_bound} \\
    \midrule
    Lower bound & $\frac{r}{\varepsilon}$ & \cite{DV06,GS12}, \Cref{thm:lower-bound}
 \\\bottomrule
  \end{tabular}
  \caption{\textbf{Bounds for column Nystr{\"o}m approximations.}  Upper bounds and a lower bound on the number of columns that several Nystr{\"o}m approximation schemes use to produce an $(r,\varepsilon)$-approximation \eqref{eq:1+eps}.  The parameter $\eta$ is the relative error (see \Cref{thm:simple_bound}).  All logarithms have base $\mathrm{e}$, and $\log_+(x) \coloneqq \max\{\log x, 0\}$ for $x > 0$.
  *The result for uniform sampling assumes the diagonal entries of $\mat{A}$ are all equal.
  ${}^\dagger$See \Cref{sec:rls} for discussion.}
  \label{tab:comparison}
\end{table}

For comparison, \Cref{tab:comparison} presents the best available upper bounds on the number $k$ of columns for \RPCholesky and other column Nystr{\"o}m approximation methods
to achieve~\eqref{eq:1+eps}.
These bounds are expressed in terms of $r$, $\varepsilon$, $N$, and the relative approximation error $\eta$ defined in \eqref{eq:rpc-bd-intro}.
DPP sampling satisfies the strongest error bounds of all the methods in \Cref{tab:comparison}.  To achieve an $(r, \varepsilon)$-approximation with DPP sampling,
it suffices to take the number $k$ of columns as
\begin{equation} \label{eq:cannot_improve}
    k \geq \frac{r}{\varepsilon} + r - 1.
\end{equation}
\RPCholesky satisfies the second strongest error bound~\eqref{eq:rpc-bd-intro}.
The main difference between the DPP result \eqref{eq:cannot_improve} and the \RPCholesky result \eqref{eq:rpc-bd-intro}
is the multiplicative factor $\log(1/\eta)$ present in the latter.
However, because the relative error $\eta$ appears inside the logarithm, this factor has only a modest impact on the computational scaling.
Indeed, $\log (1/\eta)$ is between 2.4 and 26 for all twenty examples in \Cref{tab:many-matrices}.
We establish \Cref{thm:simple_bound} in \Cref{sec:theoretical-analysis},
which contains additional results and discussion.

The bound on the approximation rank for \RPCholesky is at least $65\times$ smaller than the bound for RLS sampling.
However, this quantitative dependence may be a result of the proof technique that overstates the difference between the methods.
See \Cref{sec:rls} for the proof of the RLS error bounds and related discussion.

The error bounds for \RPCholesky are also \emph{significantly stronger} than the bounds for the greedy method and uniform sampling.
For a worst-case matrix $\mat{A}$, the greedy method requires $\Theta(N)$ columns to approach the best rank-$r$ approximation error (\Cref{sec:greedy}),
while the uniform sampling method requires $\Theta(r/\eta)$ columns (\Cref{sec:uniform}).
In contrast, \RPCholesky uses a number of columns that is independent of the dimension $N$ and depends only logarithmically on the relative error $\eta$.
These results help explain why \RPCholesky does not exhibit the same failure modes as the greedy method and uniform sampling.

\section{History, Related Work, and Extensions} \label{sec:context}

To understand the history of the \RPCholesky algorithm,
we must reinterpret it as a randomly pivoted QR algorithm.
Indeed, almost all of the existing theoretical and numerical work
that is relevant to \RPCholesky is framed in terms of
randomly pivoted QR algorithms, which---unlike \RPCholesky---require reading all entries of the input matrix.  In this section, we will explore
this connection, discuss prior work, and describe
related algorithms.

\subsection{Nystr{\"o}m approximations and projection approximations}
\label{sec:close_relationship}

We begin with an alternative perspective on the
column Nystr{\"o}m approximation.
Let $\mat{A} \in \complex^{N \times N}$ be a psd matrix,
and select any factorization $\mat{A} = \mat{B}^* \mat{B}$
where the factor $\mat{B} \in \complex^{M \times N}$.
For any subset $\set{S} \subseteq \{1, \dots, N \}$ of column
indices, the Nystr{\"o}m approximation $\mat{\hat{A}} = \mat{A}(:, \set{S}) \mat{A}(\set{S}, \set{S})^{\dagger} \mat{A}(\set{S}, :)$
of the target matrix $\mat{A}$ admits the representation
\begin{equation*}
\mat{\hat{A}} = \mat{B}^* \mat{\Pi}_{\mat{B}(:, \set{S})} \mat{B},
\end{equation*}
where $\mat{\Pi}_{\mat{M}}$ denotes the orthogonal projector
onto $\operatorname{range}(\mat{M})$.  To check this claim,
set $\mat{M} = \mat{B}(:, \set{S})$ and decompose the projector
as $\mat{\Pi}_{\mat{M}} = \mat{M}(\mat{M}^* \mat{M})^{\dagger} \mat{M}^*$.

Equivalently, the Nystr{\"o}m approximation takes the form
\[
\mat{\hat{A}} = \mat{\hat{B}}^* \mat{\hat{B}}
\quad\text{where}\quad
\mat{\hat{B}} = \mat{\Pi}_{\mat{B}(:, \set{S})} \mat{B}.
\]
We call $\mat{\hat{B}}$ a \textit{column projection approximation}
of $\mat{B}$ with respect to the column index set $\set{S}$.
The trace-norm error in the Nystr{\"o}m approximation
can be expressed in terms of the projection approximation:
\[
\tr(\mat{A} - \mat{\hat{A}})
    = \Vert \mat{B} - \mat{\hat{B}} \Vert_{\mathrm{F}}^2.
\]
Therefore, the problem of finding a set $\set{S}$ of $k$ columns
to minimize the trace-norm error in the Nystr{\"o}m approximation
of $\mat{A}$ is the same as the problem of finding a set $\set{S}$ of $k$
columns to minimize the squared Frobenius-norm error in the
projection approximation of the factor $\mat{B}$.
Additionally,
the projection approximation $\mat{\hat{B}}$
is the best Frobenius-norm approximation of $\mat{B}$
with the same range as $\mat{B}(:, \set{S})$.

\subsection{Partial Cholesky and partial QR}

Just as we compute the column Nystr{\"o}m approximation by means of
the pivoted partial Cholesky algorithm, we can compute a column projection
approximation via the classical column-pivoted partial QR algorithm %algorithm
(\Cref{alg:pivoted-qr} in the appendix).

Here is a conceptual description.
For an input matrix $\mat{B}$, the column-pivoted partial QR algorithm initializes the approximation $\mat{\hat{B}}^{(0)} = \mat{0}$ and the residual $\mat{B}^{(0)} = \mat{B}$.  At each step $i$, we choose a column index $s_i$ using some pivot rule (\Cref{sec:qr-pivot-rules}).  The updated approximation $\mat{\hat{B}}^{(i)}$ is the projection approximation of $\mat{B}$ with respect to the selected columns $\{s_1, \dots, s_i\}$.  The updated residual is $\mat{B}^{(i)} = \mat{B} - \mat{\hat{B}}^{(i)}$.  We repeat for $k$ steps or until we trigger a stopping criterion.

The relationship between pivoted partial QR and pivoted partial Cholesky
is classical \cite[\S5.2]{high90c}:

\begin{property}[Pivoted partial Cholesky and pivoted partial QR] \label{prop:chol-qr}
Assume that $\mat{A} = \mat{B}^* \mat{B}$.  Suppose
pivoted partial Cholesky (\Cref{alg:cholesky}) selects pivot set $\set{S}$
and outputs an approximation $\mat{\hat{A}}$, while
partial QR (\Cref{alg:pivoted-qr}) selects the same pivot set $\set{S}$
and outputs an approximation $\mat{\hat{B}}$.  Then the approximations
satisfy $\mat{\hat{A}} = \mat{\hat{B}}^* \mat{\hat{B}}$.
\end{property}

While pivoted partial QR and pivoted partial Cholesky are algebraically related, these procedures should ideally be applied to different matrices and have different computational costs.
To construct a rank-$k$ approximation of a rectangular matrix $\mat{B} \in \complex^{M\times N}$, pivoted partial QR reads all $MN$ entries of $\mat{B}$ and expends $\order(kMN)$ additional arithmetic operations.
By contrast, pivoted partial Cholesky construct a rank-$k$ approximation of a psd matrix $\mat{A} \in \complex^{N \times N}$ by looking at just $kN$ matrix entries (ignoring the pivot rule) and utilizing $\mathcal{O}(k^2 N)$ additional operations.
    
% While pivoted partial QR and pivoted partial Cholesky are closely related, these procedures have rather different computational costs.
% To construct a rank-$k$ approximation of a matrix $\mat{B} \in \complex^{M\times N}$, pivoted partial QR reads all $MN$ entries of $\mat{B}$ and expends $\order(kMN)$ arithmetic operations.
% By contrast, pivoted partial Cholesky on a psd matrix $\mat{A} \in \complex^{n\times n}$ only looks at $kN$ matrix entries (ignoring the pivot rule) and requires $\mathcal{O}(k^2 N)$ arithmetic operations to construct a rank-$k$ approximation.

% How do the computational costs of pivoted partial QR compare
% with pivoted partial Cholesky?  Suppose that $\mat{A} = \mat{B}^* \mat{B}$
% with $\mat{B} \in \complex^{M \times N}$, and assume
% that have direct access to the entries of the Gram matrix $\mat{A}$.
% %
% Pivoted partial QR repeatedly evaluates all $MN$ entries of the matrix $\mat{B}$,
% while pivoted partial Cholesky only looks at $kN$ entries of the Gram matrix $\mat{A}$
% (ignoring the pivot rule).
% %
% Pivoted partial QR expends $\mathcal{O}(kMN)$ additional arithmetic,
% while pivoted partial Cholesky only uses $\mathcal{O}(k^2 N)$ operations.
% %
% The storage costs of $\mathcal{O}(k(M+N))$ and $\mathcal{O}(kN)$
% are similar.

% In summary, the pivoted partial Cholesky algorithm is a much faster alternative to the partial QR algorithm if we can access the Gram matrix $\mat{A} = \mat{B}^*\mat{B}$
% and we are concerned about entry evaluation costs.

\subsection{Pivot rules}
\label{sec:qr-pivot-rules}

The goal of column-pivoted partial QR is to identify the
``most important'' columns of a rectangular matrix.
There is an extensive literature on
pivot selection rules for QR decompositions (see~\cite{GE96:Efficient-Algorithms} and the references therein).
The most sophisticated methods, called strong rank-revealing QR algorithms, enjoy powerful approximation guarantees but are intricate and computationally expensive. 

We can also consider simpler strategies that are computationally cheaper, akin to the diagonal
pivot rules used in pivoted partial Cholesky.  If $\mat{A} = \mat{B}^* \mat{B}$,
then the diagonal entries of $\mat{A}$ agree with the
squared column norms of $\mat{B}$.  That is,
\[
\mat{A}(j,j) = \Vert \mat{B}(:, j) \Vert_2^2
\quad\text{for each $j = 1, \dots, N$.}
\]
This correspondence allows us to equip partial QR with
analogs of the partial Cholesky pivoting strategies.
In particular, greedy pivot rules are classical~\cite[\S 1]{high90c}.

From our current vantage, it is natural to
consider a \textit{randomly pivoted QR method}
(\Cref{alg:adaptive} in the appendix with $T = 1$) that is akin to randomly
pivoted Cholesky (\Cref{alg:rpcholesky}).
At each step $i$ of this method,
we sample the next pivot $s_{i}$ in proportion
to the squared column norms of the residual $\mat{B}^{(i-1)}$.

\subsection{Blocking} \label{sec:block}

A standard strategy for accelerating (column-pivoted) QR methods
is to select a block of columns to eliminate
at each step~\cite[Sec.~5.2.3]{GVL13:Matrix-Computations-4ed}.
In particular, we can consider a blocked variant of
randomly pivoted QR (\Cref{alg:adaptive} in the appendix with $T > 1$).
Let $T\ge 1$ be a block size parameter.
At each step $i$, we sample $T$ pivot columns independently
with probability proportional to the squared column norms
of the residual $\mat{B}^{(i-1)}$.  We project out these
columns and repeat.

Analogously, we can develop a blocked version of
\RPCholesky (\Cref{alg:rpcholesky_blocking} with $T > 1$).
At each step $i$, we sample $T$ pivot columns independently
with probability proportional to the diagonal entries
of the residual $\mat{A}^{(i-1)}$.  We eliminate these
columns and repeat.

Both of these methods require careful implementation
to manage potential issues with numerical stability.
In particular, block \RPCholesky repeatedly computes a full Cholesky decomposition of a square matrix that might be numerically rank-deficient.
We can address this issue by adding a positive multiple of the identity to this matrix; see \Cref{alg:rpcholesky_blocking} for details.
Block randomly pivoted QR, on the other hand, can suffer from loss of orthogonality in the computed $\mat{Q}$ matrix; see 
%any 
a standard numerical linear algebra reference (e.g., \cite[Ch.~5]{GVL13:Matrix-Computations-4ed}) for discussion on stably computing a QR decomposition.

How does block \RPCholesky compare to simple \RPCholesky?
Usually, the block \RPCholesky method is faster.
When block \RPCholesky with block size $T=50$ is applied to a $10^5 \times 10^5$ kernel matrix (see \Cref{sec:krr}), we obtain a $5\times$ speedup using the $\ell_1$ Laplace kernel \eqref{eq:l1_laplace} and a $20\times$ speedup after switching to the Gaussian kernel.
As demonstrated in \Cref{tab:many-matrices}, the approximation quality of block \RPCholesky and simple \RPCholesky is similar for many inputs.
However, block \RPCholesky can produce a significantly worse approximation on some inputs, leading to a $10^6\times$ higher trace-norm error when applied to the \texttt{COMET\_MC\_SAMPLE} example, for example.
The theoretical and empirical properties of block \RPCholesky are studied in the followup paper \cite{ETW24a}, which introduces an ``accelerated \RPCholesky'' method that remedies the deficits of block \RPCholesky.

\subsection{Randomly pivoted QR: Origins}

We believe that the randomly pivoted QR method was
first proposed in the theoretical computer
science literature on column subset selection problems.
In 2004, Frieze et al.~\cite{FKV04} studied projection approximations
where a set of column indices is chosen randomly by sampling in
proportion to the squared column norms of the input matrix.
In 2006, Deshpande et al.~\cite{DRVW06, DV06} described a procedure
that applies the Frieze et al.~approximation iteratively,
projecting out the contributions of previously selected columns
at each step.
They called the resulting method ``adaptive sampling'',
in contrast to the one-shot sampling method of Frieze et al.
Their approach is essentially the same as block randomly pivoted QR,
modulo implementation details.
As we will explain, the blocking is central to their proposal.
We have chosen to use the terminology
``(block) randomly pivoted QR''
in this work because it clarifies the relationship with standard
linear algebra algorithms.

\begin{algorithm}[t]
  \caption{\RPCholesky: Block variant} \label{alg:rpcholesky_blocking}
  \textbf{Input:} Psd matrix $\mat{A} \in \mathbb{C}^{N\times N}$; block size $T$; tolerance $\eta$ or approximation rank $k$ which is a multiple of $T$
  
  \textbf{Output:} Pivot set $\set{S}$; matrix $\mat{F}$ defining Nystr\"om approximation $\mat{\hat{A}} = \mat{F}\mat{F}^*$ 
  \begin{algorithmic}
    \State Initialize $\mat{F} \leftarrow \mat{0}_{N \times k}$,
    $\set{S} \leftarrow \emptyset$, and
    $\vec{d} \leftarrow \diag \mat{A}$
    \Comment{Evaluate diagonal of input matrix}
    \For{$i = 0$ to $k/T - 1$}
    \Comment{Alternatively, run until $\sum_{j=1}^N \vec{d}(j) \le \eta \tr \mat{A}$}
    \State Sample $s_{iT + 1}, \ldots, s_{iT + T} \stackrel{\rm iid}{\sim} \vec{d} / \sum_{j=1}^N \vec{d}(j)$
    \Comment{Probability prop. to diagonal elements of residual}
    \State $\set{S}' \leftarrow \Call{Unique}{\{s_{iT + 1}, \ldots, s_{iT + T}\}}$
    \State $\set{S} \leftarrow \set{S} \cup \set{S}'$
    \State $\mat{G} \leftarrow \mat{A}(:,\set{S}')$
    \Comment{Evaluate columns $\set{S}'$ of input matrix}
    \State $\mat{G} \leftarrow \mat{G} - \mat{F} \mat{F}(\set{S}', :)^*$
    \Comment{Remove overlap with previously chosen columns}
    \State $\mat{R} \leftarrow \Call{Chol}{\mat{G}(\set{S}',:) + \varepsilon_{\rm mach}\tr(\mat{G}(\set{S}',:))\mathbf{I}}$ \Comment{Stabilized Cholesky $\mat{G}(\set{S}',:) \approx \mat{R}^*\mat{R}$}
    \State $\mat{F}(:,iT + 1:iT + |\set{S}'|) \leftarrow \mat{G} \mat{R}^{-1}$
    \Comment{Update approximation}
    \State $\vec{d} \leftarrow \vec{d} - \Call{SquaredRowNorms}{\mat{G} \mat{R}^{-1}}$
    \Comment{Track diagonal of residual matrix}
    \State $\vec{d}\leftarrow \max\{\vec{d},\vec{0}\}$ \Comment{Ensure diagonal remains nonnegative}
    \EndFor
    \State  Remove zero columns from $\mat{F}$
  \end{algorithmic}
\end{algorithm}

\subsection{Randomly pivoted QR: Theory}
\label{sec:rpqr-theory}

In 2006, the original papers~\cite{DRVW06,DV06} on
randomly pivoted QR (i.e., adaptive sampling)
focused on proving approximation guarantees.
Here is a typical result.

\begin{proposition}[\protect{Deshpande et al.~\cite[Thm.~1.2]{DRVW06}}]
\label{prop:drvw}
Fix a matrix $\mat{B} \in \real^{M \times N}$,
a target approximation rank $r$,
and a tolerance $\varepsilon \in (0,1)$.
Set the block size $T \geq r/\varepsilon$.
After $s$ steps, the block randomly pivoted
QR method (\Cref{alg:adaptive}) produces
a random projection approximation $\mat{\hat{B}}$
with rank $k = s\cdot T$ that satisfies
\[
\mathbb{E} \Vert \mat{B} - \mat{\hat{B}} \Vert_{\mathrm{F}}^2
    \leq (1 - \varepsilon)^{-1} \Vert \mat{B} - \lowrank{\mat{B}}_r \Vert_{\mathrm{F}}^2 + \varepsilon^s \Vert \mat{B} \Vert_{\mathrm{F}}^2.
\]
\end{proposition}

Using \Cref{prop:chol-qr}, we obtain a parallel result
for block \RPCholesky (\Cref{alg:rpcholesky_blocking}).  Fix a psd input matrix $\mat{A} \in \real^{N \times N}$,
an approximation rank $r$, and a tolerance $\varepsilon \in (0,1)$.
Set the block size $T \geq r/\varepsilon$.
After choosing $k/T$ blocks of columns, block \RPCholesky
produces a random Nystr{\"o}m approximation $\mat{\hat{A}}$
with rank $k$ that satisfies
\[
\mathbb{E} \tr( \mat{A} - \mat{\hat{A}} )
    \leq (1 - \varepsilon)^{-1} \tr( \mat{A} - \lowrank{\mat{A}}_r )
    + \varepsilon^{k/T} \tr(\mat{A}).
\]
Assuming $\varepsilon \leq 1/2$, this result guarantees an $(r, 3\varepsilon)$-approximation
with rank
\[
k = \frac{r}{\varepsilon} + r \cdot \frac{\log(1/\eta)}{\varepsilon \log(1/\varepsilon)}
\quad\text{where}\quad
\eta = \frac{\tr(\mat{A} - \lowrank{\mat{A}}_r)}{\tr(\mat{A})}.
\]
For comparison, our result \Cref{thm:simple_bound} guarantees an $(r,\varepsilon)$-approximation with approximation rank
$k = r/\varepsilon + r \log(1/(\eta \varepsilon))$,
which is always better.

More seriously, Proposition~\ref{prop:drvw} requires
the user to fix the approximation rank $r$ and error
tolerance $\varepsilon$ in advance.
The block size $T$ is adapted to both parameters.
Furthermore, the statement is vacuous
in case the block size $T = 1$.
In other words, the existing theory is silent about the
versions of these algorithms that we recommend for use in practice, which have fixed block size and stopping rules that depend
on the observed error.

Our paper provides the first analysis of \RPCholesky
that does not require unrealistic parameter choices
and that addresses the fundamental case where the block size $T = 1$.
See \Cref{sec:theoretical-analysis} for the details, including a new error bound (\Cref{cor:main_bound}) for randomly pivoted QR with $T = 1$.
In the followup paper \cite{ETW24a}, we extend our analysis
to handle the case of a fixed block size $T > 1$.

\subsection{Randomly pivoted QR: Empirical work}

In the period from 2009--2013,
researchers in applied machine learning explored the
empirical performance of randomly pivoted QR for
approximating kernel matrices.
In sharp contrast to our findings for \RPCholesky (\Cref{sec:rpc}),
their conclusions were pessimistic.

We must stress that the existing numerical
work applies randomly pivoted QR to a kernel matrix to obtain a column projection
approximation at a cost of $\mathcal{O}(k N^2)$ operations
(they run \cref{alg:adaptive} with input $\mat{A}$).
\textbf{These studies do not apply \RPCholesky
to obtain a column Nystr{\"o}m
approximation at the lesser cost of $\mathcal{O}(k^2 N)$ operations.}

In their review of kernel approximation, Kumar et al.\
reported~\cite[Tab.~3]{KMT12}
that randomly pivoted QR (\Cref{alg:adaptive})
was $60\times$ to $800\times$ slower than the Nystr{\"o}m
approximation~\eqref{eq:nystrom} with uniformly sampled
columns.  For data sets with
$N > 4000$ data points, they did not even run the
randomly pivoted QR algorithm because they considered
it impractical.  Summarizing their findings, they
emphasized ``the computational and storage burdens''
(p.~990) and they 
%complained 
stated
that the algorithm
``requires a full pass through [the kernel matrix] $\mat{K}$
at each iteration and is thus inefficient for large $\mat{K}$''
(p.~989).

The literature describes some attempts~\cite{KMT09b,KMT09a,KMT12,WZ13}
to improve randomly pivoted QR, but the
algorithm fell into disuse over the subsequent decade.
%\RPCholesky never received any attention at all.

\subsection{Randomly pivoted Cholesky: Origins}

In 2017, Musco \& Woodruff~\cite[p.~3]{MW17} briefly noted that
one can perform ``adaptive sampling'' more efficiently,
given access to the Gram matrix.
Their observation suggests an algorithm similar to
block \RPCholesky (\Cref{alg:rpcholesky_blocking}).
This paper does not include an implementation or report any numerical experiments.

Randomly pivoted Cholesky also appears in a 2020 paper of Poulson \cite{Pou20}.
Rather than using \RPCholesky for low-rank approximation, Poulson uses \RPCholesky to sample from a projection DPP.
In Poulson's work, the input matrix $\mat{A}$ is always a rank-$k$ orthoprojector, \RPCholesky is always run for exactly $k$ steps, and the computational output is the set $\set{S}$ of pivots; the factor $\mat{F}$ is discarded.

To the best of our knowledge, the papers~\cite{MW17,Pou20}
are the sole references to \RPCholesky in the literature.
Neither paper documents numerical experiments or provides a theoretical analysis of \RPCholesky for the low-rank approximation task.

\subsection{Comparison with random Fourier features}
Random Fourier features (RFF) \cite{RR07} is a popular Monte Carlo method for the low-rank approximation of a psd kernel matrix (\Cref{sec:kernel_basics}) associated with a translationally invariant kernel function.
The main disadvantage of RFF that the approximation error $\mat{A} \approx \mat{\hat{A}}$ converges at the Monte Carlo rate $\order(k^{-1/2})$, whereas \RPCholesky is \emph{spectrally accurate}, producing approximations comparable to a best low-rank approximation.
% As an advantage, RFF approximation is very efficient, requiring only $\order(kN)$ operations.
For applications where a high-accuracy approximation is important, RFF fares significantly worse than Nystr\"om methods like \RPCholesky \cite{diaz2023robust}.
However, RFF can be useful in applications where a fast, crude approximation of $\mat{A}$ is sufficient.
% As another distinction, \RPCholesky can be applied to all psd matrices, even those that do not arise from a translationally invariant kernel.

\section{Applications to Kernel Machine Learning}
\label{sec:rpc}

In this section, we undertake
a numerical study to evaluate the performance of \RPCholesky
on benchmark kernel computations from scientific machine 
learning.
\Cref{sec:krr} treats a kernel ridge regression problem that arises in quantum chemistry, and \Cref{sec:spectral_clustering} discusses a kernel spectral clustering problem from molecular biophysics.
%
% We believe that these are the first investigations
% of the empirical performance of \RPCholesky
% for low-rank kernel matrix approximation.

\subsection{Kernel methods: Basics} \label{sec:kernel_basics}

Kernel methods~\cite{SS02} are designed for analyzing data in a general 
domain $\mathcal{X}$, equipped with a \textit{kernel function}
$K : \mathcal{X} \times \mathcal{X} \to \complex$.
We interpret the kernel function as a measure of similarity
between a pair of data points.
Suppose that $\vec{x}_1, \dots, \vec{x}_N \in \mathcal{X}$
is a list of $N$ data points.  We can form a
\textit{kernel matrix} $\mat{A} \in \complex^{N \times N}$
that tabulates the pairwise similarities:
\[
\mat{A}(i,j) \coloneqq K(\vec{x}_i, \vec{x}_j)
\quad\text{for $1 \leq i, j \leq N$.}
\]
We say that the kernel function $K$ is \textit{positive definite}
if the kernel matrix $\mat{A}$ is psd for every family of
$N$ data points in $\mathcal{X}$ and every natural number $N$.
For example, the inner-product kernel and the Gaussian kernel
are both positive-definite kernels on $\complex^d$:
\[
\begin{aligned}
K(\vec{x}, \vec{y}) &= \vec{x}^* \vec{y}
&&\quad\text{(inner-product kernel)}; \\
%\quad\text{and}\quad
K(\vec{x}, \vec{y}) &= \exp\biggl(-\tfrac{1}{2 \sigma^2} \lVert \vec{x} - \vec{y} \rVert_2^2 \biggr)
&&\quad\text{(Gaussian kernel)}.
\end{aligned}
\]
The parameter $\sigma > 0$ is called the \textit{bandwidth}
of the Gaussian kernel.
Kernel methods reduce data analysis tasks in $\mathcal{X}$
to linear algebra computations on the kernel matrix $\mat{A}$.

We have seen that \RPCholesky 
quickly and reliably
finds a 
% near-optimal 
low-rank approximation of
a psd kernel matrix.
Equivalently, the pivots $\set{S}$ of \RPCholesky identify a modest number of 
% salient 
data points $\{\vec{x}_s : s \in \set{S} \}$ that can be used to summarize the
data set.
Therefore, we can incorporate \RPCholesky into the computational pipeline to obtain more scalable algorithms for kernel machine learning.
We will elaborate on this idea in the next two sections.

\subsection{Kernel ridge regression} \label{sec:krr}

One powerful application for \RPCholesky is to accelerate \emph{kernel ridge regression} (KRR)~\cite[\S4.9.1]{SS02}.  We will study an application
in quantum chemistry.

\subsubsection{Functional regression}

KRR is a nonlinear extension of least-squares regression that approximates an unknown input--output map
using a positive-definite kernel function $K : \mathcal{X} \times \mathcal{X} \to \complex$
and input--output pairs $(\vec{x}_1, y_1),\ldots,(\vec{x}_N, y_N)\in \mathcal{X}\times \complex$.
As output, KRR provides a prediction function of the form
\begin{equation}
    \label{eq:krr_ansatz}
    f(\cdot \, ; \, \vec{\beta}) \coloneqq \sum_{i=1}^N \beta_i K\bigl(\vec{x}_i, \cdot\bigr),
\end{equation}
with the coefficient vector $\vec{\beta}$ chosen to minimize a regularized least-squares loss:
\begin{equation*} \label{eq:minimization}
    \min_{\vec{\beta} \in \mathbb{C}^N} \frac{1}{N}
    \sum_{j=1}^N \biggl\lvert 
    f\bigl(\vec{x}_j\, ;\,\vec{\beta}\bigr) - y_j
    \biggr\rvert^2 
    + \lambda\, \sum_{i,j = 1}^N \beta_i \beta_j K\bigl(\vec{x}_i, \vec{x}_j\bigr).
\end{equation*}
Explicitly, 
the vector $\vec{\beta}$ is the solution to a linear system
\begin{equation} \label{eq:krr_linear_system}
    \vec{\beta} = (\mat{A} + \lambda N \, \Id)^{-1} \vec{y},
\end{equation}
where $\mat{A}$ is the $N\times N$ kernel matrix
induced by the input data $\vec{x}^{(i)}$
and $\vec{y}$ is the vector of output values.

\subsubsection{Restricted KRR via \RPCholesky}

Directly computing the vector $\vec{\beta}$ %coefficients
via \eqref{eq:krr_linear_system} would require solving
a dense linear system at $\order(N^3)$ cost. %but
As a faster alternative, we can solve a \emph{restricted} version of the KRR problem at $\order(k^2N)$ cost, where $k$ is a user-definable parameter.
Restricted KRR was proposed by Smola and Bartlett \cite{smola2000sparse} and developed in subsequent works \cite{rudi2017falkon,RCCR18,diaz2023robust}.

In restricted KRR, we first identify a set $\set{S} = \{s_1, \ldots, s_k\}\subseteq \{1,\ldots,N\}$ of $k$ \emph{landmarks} that achieve good coverage over the data set.
Next, we find a restricted prediction function
\begin{equation} \label{eq:krr_acc}
    \hat{f}\bigl(\cdot \, ; \, \vec{\hat{\beta}}\, \bigr) 
    = \sum_{i = 1}^k \hat{\beta}_i K\bigl(\vec{x}_{s_i}, \cdot\bigr)
\end{equation}
by solving the regularized least-squares problem
\begin{equation*}
    \min_{\vec{\hat{\beta}}\in\complex^k} \frac{1}{N}
    \sum_{j=1}^N \biggl\lvert  \hat{f}\bigl(\vec{x}_j \, ; \, \vec{\hat{\beta}}\, \bigr) - y_j
    \biggr\rvert^2 
    + \lambda\, \sum_{i,j = 1}^k \hat{\beta}_i \hat{\beta}_j K\bigl(\vec{x}_{s_i}, \vec{x}_{s_j}\bigr).
\end{equation*}
The coefficient vector $\vec{\hat{\beta}} \in \mathbb{C}^k$ is given by a smaller linear system
\begin{equation*}
    \vec{\hat{\beta}} = \bigl( \mat{A}(\set{S},:)\mat{A}(:,\set{S})
    + \lambda N\, \mat{A}(\set{S},\set{S}) \bigr)^{-1} 
    \mat{A}(\set{S}, :) \vec{y}.
\end{equation*}
involving a $k \times k$ matrix, which is relatively inexpensive to solve.
Forming and solving this system requires $\order(k^2N)$ operations.
Evaluating the prediction function \eqref{eq:krr_acc} for restricted KRR requires just $\order(k)$ operations,
which improves on the $\order(N)$ cost of evaluating \eqref{eq:krr_ansatz}.

In past work, the landmark set $\set{S}$ has been selected by uniform random sampling \cite{rudi2017falkon,diaz2023robust}, ridge leverage score sampling \cite{RCCR18}, or greedy procedures \cite{smola2000sparse}.
To improve on these approaches, we propose choosing the landmarks $\set{S}$ to be the pivot set chosen by \RPCholesky, resulting in the \RPCholesky-accelerated KRR method shown in \Cref{alg:krr}.
We demonstrate below that the \RPCholesky-based approach leads to improved out-of-sample prediction accuracy compared to previous landmark selection approaches.

\begin{algorithm}[t]
  \caption{\RPCholesky-accelerated kernel ridge regression} \label{alg:krr}
  \textbf{Input:} Data points $\set{X} = \{\vec{x}_1,\ldots,\vec{x}_N\} \subseteq \mathbb{C}^d$; output values $\vec{y}\in\mathbb{C}^N$; approximation rank $k$; regularization parameter $\lambda > 0$
  
  \textbf{Output:} Pivots $\set{S}$ and coefficients $\vec{\hat{\beta}}$ defining a prediction function $\hat{f}(\cdot)$ by \eqref{eq:krr_acc}
  \begin{algorithmic}
    \State $\mat{A} \leftarrow \Call{KernelMatrix}{\set{X}}$
    \State $(\sim,\set{S}) \leftarrow \Call{RPCholesky}{\mat{A},k}$
    \State $\vec{\hat{\beta}} \leftarrow \bigl(\mat{A}(\set{S},:)\mat{A}(:,\set{S}) + \lambda N \, \mat{A}(\set{S},\set{S})\bigr)^{-1} \mat{A}(\set{S},:) \vec{y}$
  \end{algorithmic}
\end{algorithm}

\subsubsection{QM9 data}

To showcase the effectiveness of \RPCholesky-accelerated KRR, we use \Cref{alg:krr} to predict the highest occupied molecular orbital (HOMO) energy of organic molecules from the QM9 data set \cite{RDRv14,RvBR12}.
The HOMO energy quantifies the electron-donating capacity of a molecule, and it is traditionally obtained using expensive first-principles calculations.
As a modern alternative, a recent ``Editor's Pick'' journal article \cite{STR+19} proposes applying KRR to predict HOMO energies, and the authors train their prediction function on the QM9 data set due to its large size and molecular diversity.
Here, we evaluate the \RPCholesky-accelerated KRR method on the HOMO prediction task with the QM9 data.

To represent molecules as vectors for KRR, we use a standard feature set based on the Coulomb repulsions between the atomic nuclei and the nuclear charges \cite[{\S}III.A]{STR+19}.
We standardize the data and, following  \cite{STR+19}, evaluate the similarity between data points using the positive-definite $\ell_1$ Laplace kernel:
\begin{equation} \label{eq:l1_laplace}
    K(\vec{x},\vec{y}) = \exp \biggl(  - \tfrac{1}{\sigma} \sum\nolimits_{j=1}^d \bigl\lvert \vec{x}(j) - \vec{y}(j) \bigr\rvert \biggr).
\end{equation}
We divide the data into
100,000 data points for training and roughly 33,000 data points for testing and set the bandwidth $\sigma$ and the ridge parameter $\lambda$ using cross-validation.

Forming and storing the full kernel matrix for the QM9 data set would require 40 GB and 4 trillion arithmetic operations (20,000$\times$ the operation count for the \textbf{Smile} matrix from \Cref{sec:compare}).
Because of this high computational cost,
previous authors \cite{STR+19} applied KRR using a random subsample of $N \leq \text{64,000}$ training points,
but they remarked that the approximation quality improves with the size of the data (see their Fig.~8).
In contrast, our \RPCholesky-accelerated computational approach allows us to use all $N = \text{100,000}$ training points at a modest computational cost (5 mins on a laptop computer for $k = 1000$).

\begin{figure}[t]
    \begin{subfigure}{0.49\textwidth}
    \centering
    \includegraphics[width = \textwidth]{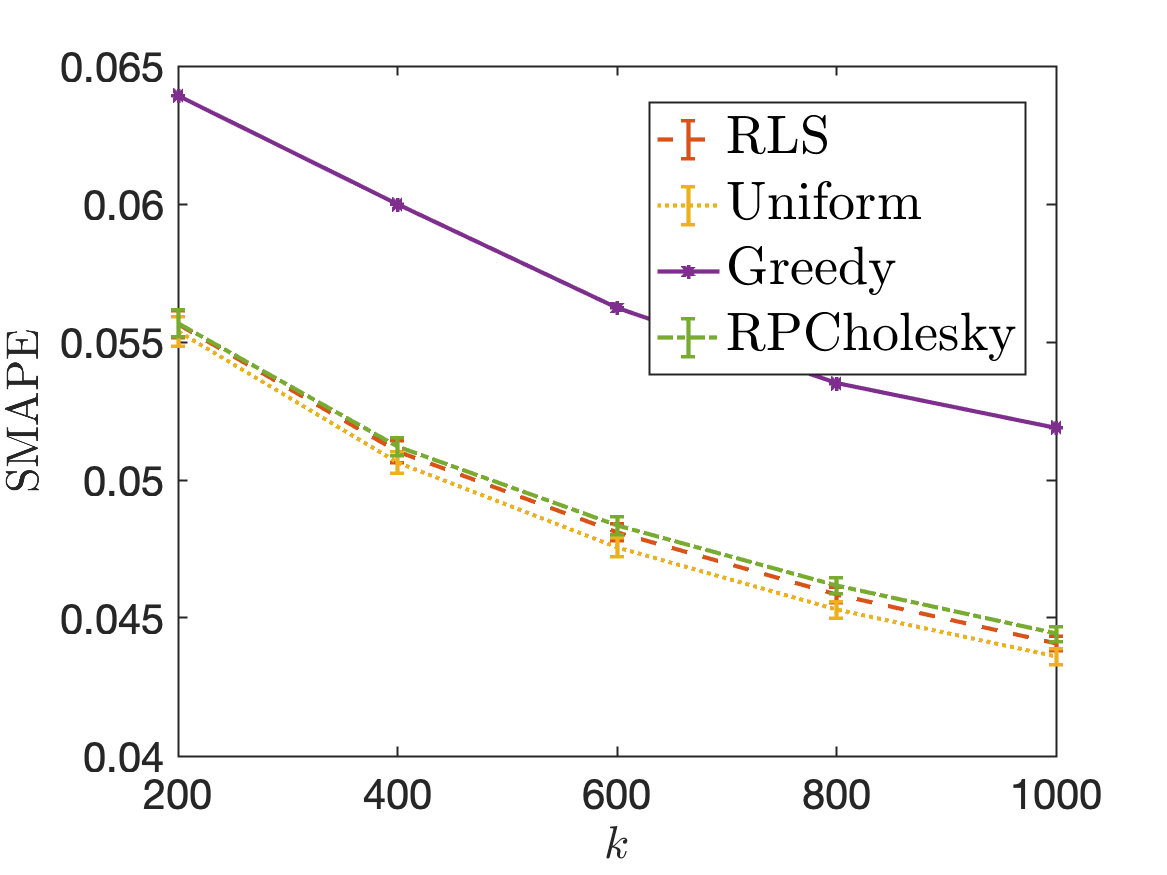}
    \end{subfigure}
    \hfill
    \begin{subfigure}{0.49\textwidth}
    \centering
    \includegraphics[width = \textwidth]{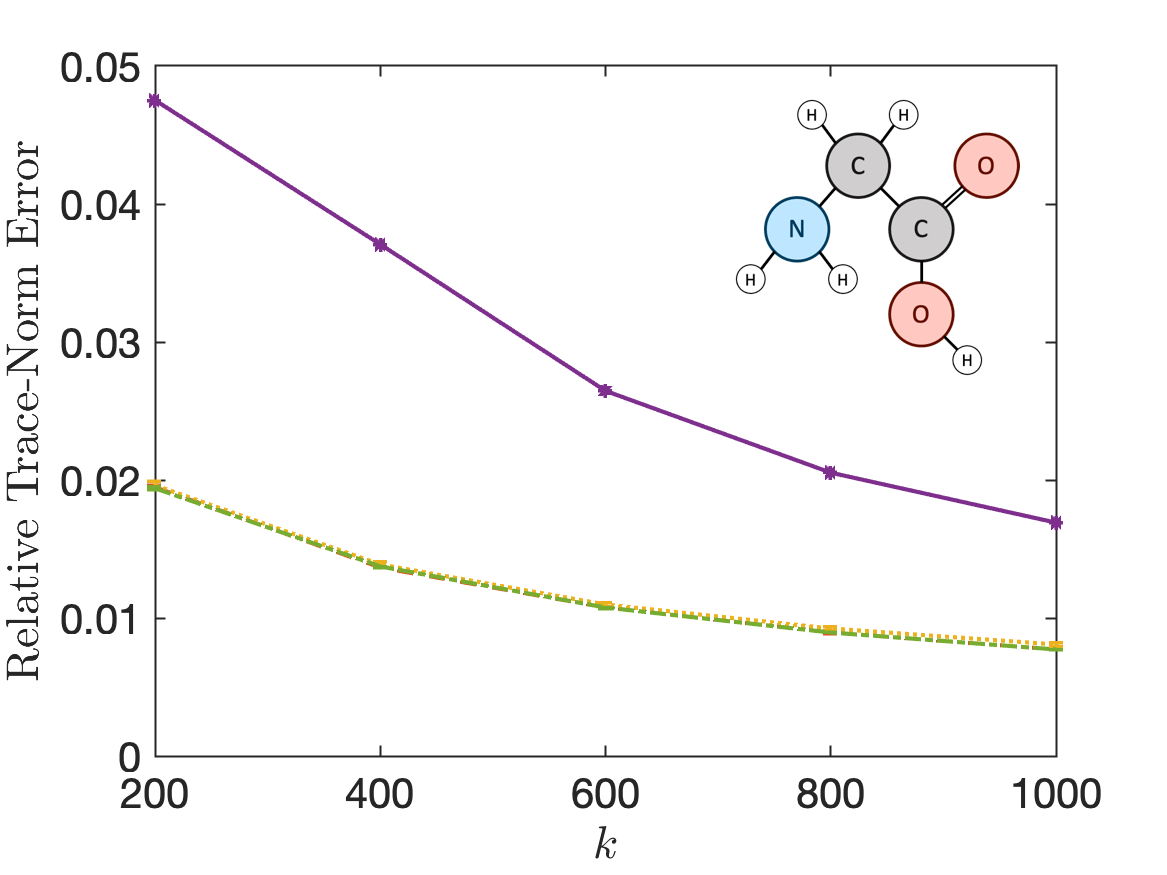}
    \end{subfigure}
    \caption{\textbf{Kernel ridge regression for QM9 data.}
    \textit{Left:} Prediction error \eqref{eq:smape} for several Nystr\"om algorithms.
    \textit{Right:} Relative trace-norm error.}
%    for different Nystr\"om algorithms.
    \label{fig:chemistry}
\end{figure}

\Cref{fig:chemistry} displays the results for the test portion of the data set ($N_{\rm test} = 3.3 \times 10^4$ data points).
The out-of-sample prediction errors are measured using
the symmetric mean absolute percentage error:
\begin{equation} \label{eq:smape}
    \operatorname{SMAPE} = \frac{1}{N_{\rm test}} \sum_{i=1}^{N_{\rm test}} \frac{\biggl| y^{\rm test}_i - \hat{f}\bigl(\vec{x}_i^{\rm test} \, ; \, \vec{\hat{\beta}} \bigr)  \biggr|}{\tfrac{1}{2} \biggl| y^{\rm test}_i\biggr| + \tfrac{1}{2} \biggr| \hat{f}\bigl(\vec{x}_i^{\rm test} \, ; \, \vec{\hat{\beta}} \bigr) \biggr|}.
\end{equation}
The SMAPE for \RPCholesky, uniform sampling, and RLS sampling are similar, with uniform sampling being the slight favorite.
The greedy method has notably worse performance than the three other methods, and we were unable to use DPP sampling because of the large values $N = 10^5$ and $k = 10^3$.

\begin{table}[t]
    \centering
    \caption{\textbf{Out-of sample prediction for QM9 data.} Symmetric absolute percentage error for the nine largest molecules in the test portion of the QM9 data set.
    The smallest errors in each row are marked in bold.
}
    \label{tab:big_molecules}
    \begin{tabular}{rrcccc} \toprule
        Compound \# & Composition & Uniform & Greedy & RLS & \RPCholesky\\ \midrule
        1,996 & \texttt{CC(C)CC1CO1} & 0.481 & \textbf{0.034} & 0.036 & 0.035 \\
        8,664 & \texttt{CC(CO)(C=O)C=O} & 0.475 & 0.039 & \textbf{0.030} & 0.033 \\
        13,812 & \texttt{CC(O)C1CC1CO} & 0.473 & 0.045 & 0.023 & \textbf{0.022} \\
        64,333 & \texttt{CC1(C)CCOC(=N)O1} & 0.538 & \textbf{0.032} & 0.084 & 0.077 \\
        81,711 & \texttt{OC1C2CC1OCCO2} & 0.536 & \textbf{0.007} & 0.019 & 0.018 \\
        109,816 & \texttt{CCC1C(O)C1CC\#C} & 0.516 & 0.062 & 0.018 & \textbf{0.016} \\
        118,229 & \texttt{COCC(C)OC(C)C} & 0.529 & 0.051 & 0.018 & \textbf{0.017} \\
        122,340 & \texttt{CC1C(CCCO)N1C} & 0.533 & \textbf{0.002} & 0.035 & 0.027 \\
        131,819 & \texttt{OCCCN1C=NC=N1} & 0.486 & 0.081 & 0.028 & \textbf{0.026} \\ \midrule 
        Average & & 0.507 & 0.039 & 0.033 & \textbf{0.030}
    \end{tabular}
\end{table}

Nevertheless, \RPCholesky leads to higher accuracy than uniform sampling for the \emph{extremal} molecules in the test set which have the greatest number of atoms (29 atoms).
% the four methods do not have the same generalization performance at the same approximation rank $k$.
% \Cref{tab:big_molecules} shows the prediction error for the molecules in the data set with the largest number of atoms (29 atoms).
Across these nine molecules, 
\Cref{tab:big_molecules} shows that
\RPCholesky is 10\% to 30\% more accurate than RLS and greedy pivoting, and \RPCholesky achieves \textbf{17$\times$ smaller prediction errors} than uniform sampling.
This observation suggests that \RPCholesky is more effective at representing less populated regions of data space, as seen earlier in the \textbf{Smile} example (\Cref{sec:compare}).
The importance of sampling diverse data points in kernel ridge regression to boost outlier predictive performance
was also emphasized in the recent work \cite{FSS21}.

\subsection{Kernel spectral clustering} \label{sec:spectral_clustering}

We can also use \RPCholesky to accelerate \emph{kernel spectral clustering} \cite{SM00,von07a}.  We will study an application in molecular biophysics.

\subsubsection{Kernel clustering}

In kernel spectral clustering, we use a positive-definite kernel function $K : \mathcal{X} \times \mathcal{X} \to \complex$ to compute similarities between data points $\vec{x}_1,\ldots,\vec{x}_N$.
Then we find a low-dimensional embedding $\bm{V} \in \mathbb{C}^{N \times m}$ of the $N$ data points into $m$-dimensional Euclidean space that preserves the kernel-based similarities as well as possible.
Afterward, we apply the conventional $k$-means algorithm \cite{AV07} to cluster the rows of $\bm{V}$.

Specifically, the embedding matrix $\bm{V}$ is chosen to minimize the kernel-based distortion
\begin{equation*}
    \frac{1}{2} \sum\nolimits_{i,j = 1}^N K(\bm{x}_i, \bm{x}_j) \lVert \bm{V}(i, :) - \bm{V}(j, :) \rVert^2
\end{equation*}
while also satisfying the isotropy condition:
\begin{equation*}
    \sum\nolimits_{i = 1}^N \left( \sum\nolimits_{j=1}^N K(\bm{x}_i, \bm{x}_j)\right) \bm{V}(i, :)^* \bm{V}(i, :) = \mathbf{I}.
\end{equation*}
The exact solution is described in~\cite{belkin2003laplacian}.
We construct the symmetrically scaled transition matrix $\mat{H} = \mat{D}^{-1/2} \mat{A} \mat{D}^{-1/2}$, where $\mat{A} \in \mathbb{C}^{N \times N}$ is the kernel matrix
and $\mat{D} \in \mathbb{C}^{N \times N}$ is the diagonal matrix that lists the row sums of $\mat{A}$.  Then we calculate the $m$ dominant eigenvectors $\mat{U} = \begin{bmatrix} {\vec{u}}_1 & \cdots & {\vec{u}}_m \end{bmatrix} \in \complex^{N \times m}$ of the transition matrix $\mat{H}$.
The optimal embedding matrix $\bm{V}$ is obtained from the diagonal rescaling $\mat{V} = \bm{D}^{-1/2} \mat{U}$.
We note that there are several nonequivalent versions of spectral clustering; we use the present version because it is effective and widely used in biochemistry \cite{GHR21,RZMC11} and amenable to acceleration by \RPCholesky.
% Other eigendecomposition-based clustering methods based on psd kernels (e.g., kernel PCA) are also amenable to acceleration by \RPCholesky, but other methods (e.g., some graph-based methods) do not involve a psd matrix and thus are innaccessible by our technques.

\subsubsection{Accelerated kernel clustering via \RPCholesky}

Directly computing the eigendecomposition of $\mat{H} = \mat{D}^{-1/2} \mat{A} \mat{D}^{-1/2}$ would require $\order(N^3)$ operations.
However, there is a faster approach due to Fowlkes et al.~\cite{fowlkes2004spectral} that requires just $\order(k^2 N)$ operations, where $k$ is a parameter.

In this approach, we replace the kernel matrix $\mat{A}$ with a rank-$k$ approximation $\mat{\hat{A}}^{(k)}$ and replace
the diagonal matrix $\mat{D}$ with the diagonal matrix $\mat{\hat{D}}$ listing the row sums of $\mat{\hat{A}}^{(k)}$.
We form the approximate transition matrix $\mat{\hat{H}} = \mat{\hat{D}}^{-1/2} \mat{\hat{A}}^{(k)} \mat{\hat{D}}^{-1/2}$
and compute its $m$ dominant eigenvectors
$\mat{\hat{U}} = \begin{bmatrix} \hat{\vec{u}}_1 & \dots & \hat{\vec{u}}_m \end{bmatrix}$.
Just as before,
we obtain an embedding $\mat{\hat{V}} = \mat{\hat{D}}^{-1/2} \mat{\hat{U}}$,
and we apply k-means clustering to the rows of $\mat{\hat{V}}$.

Fowlkes et al.~\cite{fowlkes2004spectral} used Nystr\"om approximation with uniform sampling to obtain the low-rank approximation $\mat{\hat{A}}^{(k)}$.  We propose to replace uniform sampling with \RPCholesky, and we will demonstrate that this modification can significantly enhance the clustering accuracy.  Our \RPCholesky-accelerated spectral clustering algorithm is presented in \Cref{alg:spectral_clustering}.

\begin{algorithm}[t]
  \caption{\RPCholesky-accelerated spectral clustering} \label{alg:spectral_clustering}
  \textbf{Input:} Data points $\set{X} = \{\vec{x}_1,\ldots,\vec{x}_N\} \subseteq \complex^d$; eigenvector count $m$; approximation rank $k$
  
  \textbf{Output:} Partition of labels $\{1, \ldots, N\}$ into clusters $\set{C}_1\cup \cdots \cup \set{C}_c$
  \begin{algorithmic}
    \State $\mat{A} \leftarrow \Call{KernelMatrix}{\set{X}}$
    \State $\mat{F} \leftarrow \Call{RPCholesky}{\mat{A},k}$
    \State $\mat{\hat{D}}\leftarrow \diag(\mat{F}(\mat{F}^* \onevec))$
    \State $\mat{G} \leftarrow \mat{\hat{D}}^{-1/2}\mat{F}$
    \State $(\mat{U},\sim,\sim) \leftarrow \Call{SVD}{\mat{G}, \texttt{`econ'}}$
    \State $\mat{\hat{V}} \leftarrow \mat{\hat{D}}^{-1 \slash 2}\mat{U}$
    \State $\set{C}_1,\ldots,\set{C}_c \leftarrow$ \Call{k-means}{$\mat{\hat{V}}(:,1:m)$}
    \Comment{Cluster the rows of $\mat{\hat{V}}(:,1:m)$}
  \end{algorithmic}
\end{algorithm}

\subsubsection{Alanine dipeptide trajectories}

Kernel spectral clustering has become a popular approach for interpreting molecular dynamics (MD) data sets in computational biochemistry \cite{GHR21}.
A typical MD data set consists of the $(x, y, z)$-positions of the backbone (non-hydrogen) atoms in a simulated biomolecule.
Spectral clustering is used to identify the metastable (long-lived) conformations of the molecule, which help determine the molecular functionality.

Alanine dipeptide (\ce{CH_3-CO-NH-C_{\alpha}HCH_3-CO-NH-CH_3}) is a commonly studied molecule which has emerged as a benchmark when developing and testing numerical methods.
The metastable states of alanine dipeptide are well-described by the dihedral angles $\phi$ between 
\ce{C}, \ce{N}, \ce{C_{\alpha}}, and \ce{C} and $\psi$ 
between \ce{N}, \ce{C_{\alpha}}, \ce{C}, and \ce{N}.
Yet even without access to this $\phi$--$\psi$ feature space,
we can identify the metastable states by applying spectral clustering using the $(x, y, z)$-positions of the backbone atoms.

We downloaded one of the $250$ns alanine dipeptide trajectories documented in \cite{WN18}, which includes the positions of the backbone atoms at intervals of $1$ps, leading to $N = 2.5 \times 10^5$ data points in $\real^{30}$.
Because of the large size of the data set, it would be extremely expensive to apply spectral clustering directly.
To make these computations tractable, the biochemistry literature often applies subsampling to the data and then runs spectral clustering codes for a day or more on high-performance workstations \cite[Sec.~4]{RZMC11}.
Here we document an alternative approach using \RPCholesky-accelerated spectral clustering, which uses all $N = 2.5 \times 10^5$ data points while retaining a modest computational cost (20 seconds on a laptop computer for $k = 150$).
% For this problem, the difference between $N^3$ and $k^2 N$ operations is a 
% % Thus, our approach to spectral clustering yields a 
% \textbf{speedup of 3,000,000$\times$}. 
% % as compared with~\cite{RZMC11}.
% \JAT{Confirm that this speedup is correct here and in the intro.  Going from 1 day to 10 seconds is only a 10,000x speedup.}

In our approach, we first quantify the similarity between alanine dipeptide configurations using a Gaussian kernel with bandwidth $\sigma = 0.1\,\text{nm}$.
We then apply \Cref{alg:spectral_clustering} to form a low-dimensional embedding and find clusters in the data.
Because the first three eigenvalues of $\mat{\hat{D}}^{-1/2} \mat{\hat{A}} \mat{\hat{D}}^{-1/2}$ are much larger than the rest, we cluster based on the dominant $m=3$ eigenvectors.
We use the k-means algorithm to identify $c=4$ clusters and present the results in \Cref{fig:clustering}.

\begin{figure}[t]
    \centering
    \begin{minipage}{.66\textwidth}
        \centering
        \includegraphics[scale = .45]{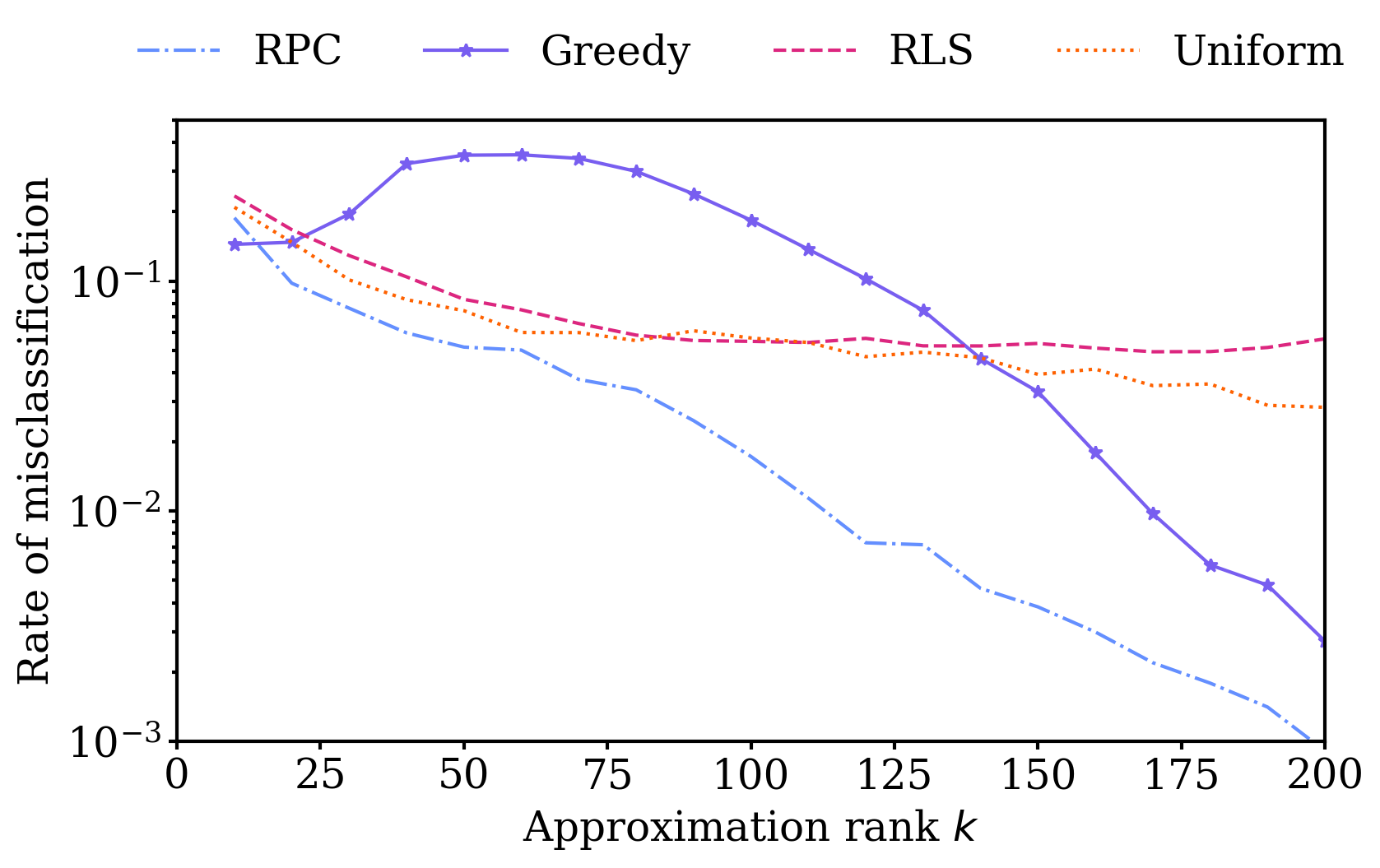}
    \end{minipage}
    \begin{minipage}{.32\textwidth}
        \centering
        \includegraphics[scale = .45]{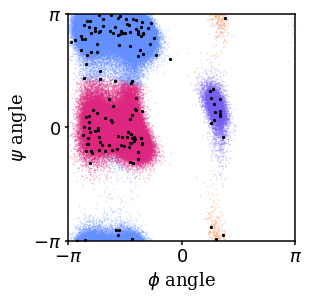}
        
        \RPCholesky
    \end{minipage}
    \vspace{.25cm}
    \newline
    \begin{minipage}{.32\textwidth}
        \centering
        \includegraphics[scale=.45]{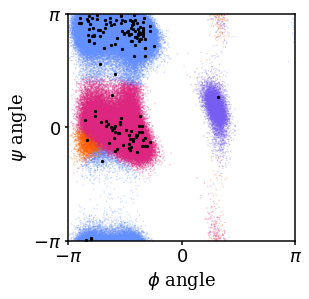}
        
        Uniform
    \end{minipage}
    \begin{minipage}{.32\textwidth}
        \centering
        \includegraphics[scale = .45]{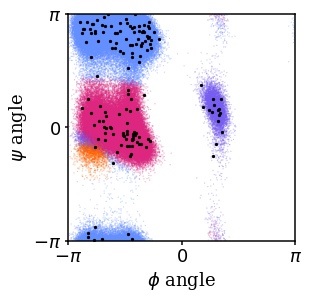}
        
        RLS
    \end{minipage}
    \begin{minipage}{.32\textwidth}
        \centering
        \includegraphics[scale = .45]{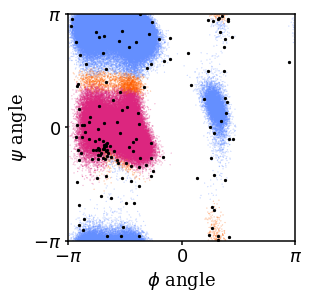}
        
        Greedy
    \end{minipage}
    \caption{\textbf{Spectral clustering for alanine dipeptide trajectories.}
    \textit{Top left:} Misclassification rate, averaged over 1000 independent trials.
    \textit{Top right:} Example of correct clustering ($<0.2\%$ misclassification) produced by \RPCholesky with rank $k = 150$.
    \textit{Bottom:} Incorrect clusterings ($>2\%$ misclassification) produced by uniform, RLS and greedy sampling with rank $k = 150$.
    Black dots mark data points selected as pivots.}
    \label{fig:clustering}
\end{figure}

To measure the error, we obtain reference clusters by running \RPCholesky with $k = 1000$ columns.
These reference clusters are consistent with~\cite[Fig.~4]{WN18}.
For each method and each approximation rank $10 \leq k \leq 200$, we calculate the fraction of incorrect labelings for the best permutation of the cluster labels.
We average over $1000$ independent trials and plot the resulting errors in \Cref{fig:clustering} (top left).
We find that \RPCholesky reliably produces a near-perfect clustering (top right panel) after reading just $k = 150$ of the $N = 2.5 \times 10^5$ columns.
In contrast, given the same approximation rank, uniform, RLS, and greedy sampling frequently produce an incorrect clustering (bottom panels).
With $k = 150$ columns, 
\textbf{\RPCholesky gives a 9$\times$ to 14$\times$ smaller misclassification rate} than uniform, RLS, or greedy sampling.

\subsubsection{Comparison with neural networks}

The recent journal article \cite{AJSW24a} compared an \RPCholesky-accelerated kernel method with a conventional neural network for the semisupervised clustering of alanine dipeptide trajectories.
Given a fixed number of parameters, the \RPCholesky-accelerated approach was found to improve the accuracy, speed, and interpretability.

\section{Theoretical Analysis of \RPCholesky}
\label{sec:theoretical-analysis}

Given the appealing computational profile and empirical performance of the \RPCholesky algorithm, we would like to understand when it is guaranteed to produce an accurate low-rank approximation. 
In this section, we will prove the following new result.

\begin{theorem}[Randomly pivoted Cholesky] \label{thm:main_bound}
Let $\mat{A}$ be a psd matrix.
Fix $r \in \mathbb{N}$ and  $\varepsilon > 0$.
The rank-$k$ column Nystr\"om approximation $\mat{\hat{A}}^{(k)}$ produced by
$k$ steps of \RPCholesky (\Cref{alg:rpcholesky})
attains the bound
\begin{equation*}
    \expect \tr \bigl(\mat{A} - \mat{\hat{A}}^{(k)} \bigr)
    \leq (1 + \varepsilon) \cdot \tr (\mat{A} - \lowrank{\mat{A}}_r),
\end{equation*}
provided that the number $k$ of columns satisfies 
\begin{equation} \label{eq:thm-k-bd}
    k \geq \frac{r}{\varepsilon} + \min\Biggl\{ r \log\biggl(\frac{1}{\varepsilon \eta}\biggr), r + r \log_+ \biggl(\frac{2^r}{\varepsilon}\biggr)\Biggr\}.
\end{equation}
The relative error $\eta$ is defined by $\eta \coloneqq \tr(\mat{A} - \lowrank{\mat{A}}_r) / \tr(\mat{A})$.
As usual, $\log_+(x) \coloneqq \max\{  \log x, 0 \}$ for $x > 0$,
and the logarithm has base $\mathrm{e}$.
\end{theorem}

\noindent
Let us emphasize that we do not need any prior knowledge to run \RPCholesky
and attain this approximation guarantee.  In fact, for any number $k$ of steps,
the error bound is valid for any pair ($r,\varepsilon$) that satisfies
the relation~\eqref{eq:thm-k-bd}.

The major takeaway from \Cref{thm:main_bound} is that \RPCholesky must produce
an $(r, \varepsilon)$-approximation as soon as the number $k$ of columns satisfies 
\begin{equation*}
\label{eq:main_bound}
    k \geq \frac{r}{\varepsilon} + r \log\biggl( \frac{1}{\varepsilon \eta} \biggr).
\end{equation*}
The logarithmic factor is typically a modest constant.
For example, $\log(1/\eta)$ is between $2.4$ and $26$ for all the examples in \Cref{tab:many-matrices}.
Therefore, this bound
is comparable with the minimal cost of $k \geq r / \varepsilon$ columns (\Cref{thm:lower-bound}).
Turning back to Table~\ref{tab:comparison}, we see that
\RPCholesky improves on the uniform sampling method,
where the number $k$ of columns can depend \textit{linearly}
on the relative error $1 / \eta$ (\Cref{sec:greedy}).
It also improves on the greedy method, in which the number $k$
of columns may be proportional to the dimension $N$ in the worst case (\Cref{sec:uniform}).  

\Cref{thm:main_bound} includes a second bound that demonstrates that \RPCholesky produces an $(r, \varepsilon)$-approximation
when the number $k$ of columns satisfies
\begin{equation}
\label{eq:alternative_bound}
    k \geq \frac{r}{\varepsilon} + r + r \log_+\biggl( \frac{2^r}{\varepsilon}\biggr).
\end{equation}
This alternative error bound is significant because it completely eliminates the dependence on the relative error $\eta$, at the cost of a quadratic dependence on $r$.
% When $r$ is small, \eqref{eq:alternative_bound} is competitive with the cost of
% $k \geq r / \varepsilon + r - 1$ columns for DPP sampling.
% However, when $r$ is large, \eqref{eq:alternative_bound} deteriorates due to the quadratic dependence on $r$.
We believe this quadratic dependence is a limitation of the proof technique, rather than a feature of the algorithm.

In the special case that $\rank(\mat{A}) \leq r$,
our analysis (\Cref{lem:single_step}) ensures that
$\mat{A} = \mat{\hat{A}}^{(k)}$ for any $k \geq r$.
We note that both greedy and DPP sampling achieve a perfect approximation quality when $k \geq \rank(\mat{A})$, but the same is not true for uniform sampling or for RLS sampling.

Due to the close relationship between \RPCholesky and randomly pivoted QR (\Cref{sec:close_relationship}), our analysis also leads to the following new error bound:
\begin{corollary}[Randomly pivoted QR] \label{cor:main_bound}
Fix a matrix $\mat{B} \in \complex^{M \times N}$, a target approximation rank $r$, and a tolerance $\varepsilon > 0$.
The rank-$k$ column projection approximation $\mat{\hat{B}}^{(k)}$ produced by
$k$ steps of randomly pivoted QR (\Cref{alg:adaptive} with block size $T = 1$)
attains the bound
\begin{equation*}
    \expect \lVert \mat{B} - \mat{\hat{B}}^{(k)} \rVert^2_{\rm F}
    \leq (1 + \varepsilon) \cdot \lVert \mat{B} - \lowrank{\mat{B}}_r \rVert^2_{\rm F},
\end{equation*}
provided that the number $k$ of columns satisfies 
\begin{equation*} \label{eq:qr-thm-k-bd}
    k \geq \frac{r}{\varepsilon} + \min\biggl\{ r \log\biggl(\frac{1}{\varepsilon \eta}\biggr), r + r \log_+ \biggl(\frac{2^r}{\varepsilon}\biggr)\biggr\}.
\end{equation*}
The relative error $\eta$ is defined by $\eta \coloneqq \lVert \mat{B} - \lowrank{\mat{B}}_r \rVert^2_{\rm F} / \lVert \mat{B} \rVert^2_{\rm F}$.
\end{corollary}
In forthcoming work, we will extend 
% \Cref{thm:main_bound}
these results
to address 
block randomly pivoted QR (\Cref{alg:adaptive}) and
block \RPCholesky (\Cref{alg:rpcholesky_blocking})
with a fixed block size $T > 1$. This analysis is more
complicated and requires several additional ideas.

\subsection{\texorpdfstring{Proof of \Cref{thm:main_bound}}{Proof of Theorem 3.1}}
\label{sec:proof_1}

Let $\mat{A}$ be a psd input matrix.  Recall the definition~\eqref{eqn:chol-updates}
of the approximation $\mat{\hat{A}}^{(i)}$ and residual $\mat{A}^{(i)}$ matrices
generated by the pivoted partial Cholesky algorithm.  Recall that \RPCholesky samples
each pivot from the distribution \eqref{eq:sampling_dist}.

The proof of \Cref{thm:main_bound} is based on the properties of the \emph{expected residual function}:
\begin{equation} \label{eq:Phi}
  \mat{\Phi}(\mat{A}) \coloneqq \expect \bigl[\mat{A}^{(1)}\, \big| \, \mat{A}\bigr] 
  = \mat{A} - \frac{\mat{A}^2}{\tr\mat{A}}.
\end{equation}
This function returns the expectation of the residual $\mat{A}^{(1)}$ after applying one step of \RPCholesky to the psd matrix $\mat{A}$.
The equality \eqref{eq:Phi} follows from a short computation using the sampling distribution~\eqref{eq:sampling_dist} and the definition~\eqref{eqn:chol-updates} of the residual.
Note that $\mat{\Phi}$ is defined on psd matrices of any dimension.

The first lemma describes some basic facts about the expected residual function $\mat{\Phi}$.
We postpone the proof to \Cref{sec:concave_proof}.

\begin{lemma}[Expected residual] \label{lem:concave}
    The expected residual map $\mat{\Phi}$ defined in~\eqref{eq:Phi} is positive, monotone, and concave with respect to the psd order. That is, for all psd $\mat{A},\mat{H}$ with the same dimensions,
    \begin{gather}
    \mathbf{0} \preceq \mat{\Phi}(\mat{A}) \preceq \mat{\Phi}(\mat{A} + \mat{H}); \label{eq:monotonicity} \\
    \theta \mat{\Phi}(\mat{A}) + (1-\theta)\mat{\Phi}(\mat{H})
    \preceq \mat{\Phi}(\theta \mat{A} + (1-\theta) \mat{H})
    \quad\text{for all $\theta \in [0,1]$}. \label{eq:concavity}
    \end{gather}
\end{lemma}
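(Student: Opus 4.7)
My plan is to dispatch positivity, concavity, and monotonicity in order of difficulty: positivity is essentially immediate from the spectral decomposition, concavity follows from a single algebraic identity, and monotonicity (the main obstacle) will be reduced to a statement about the directional derivative of $\mat{\Phi}$.

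For positivity, the spectral decomposition $\mat{A}=\sum_i \lambda_i \vec{u}_i\vec{u}_i^*$ substituted into \cref{eq:Phi} gives $\mat{\Phi}(\mat{A})=\sum_i \lambda_i(1-\lambda_i/\tr\mat{A})\vec{u}_i\vec{u}_i^*$, and every coefficient is nonnegative because the largest eigenvalue of $\mat{A}$ is bounded by $\tr\mat{A}$. (Alternatively, $\mat{\Phi}(\mat{A})$ is the expectation of the psd residual $\mat{A}^{(1)}$, which is automatically psd.)

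For concavity \cref{eq:concavity}, the linear part $\theta\mat{A}+(1-\theta)\mat{H}$ cancels on both sides, so the task reduces to proving operator convexity of $\mat{A}\mapsto\mat{A}^2/\tr\mat{A}$ on the psd cone. I would verify the identity
\begin{equation*}
\theta\frac{\mat{A}^2}{\tr\mat{A}} + (1-\theta)\frac{\mat{H}^2}{\tr\mat{H}} - \frac{(\theta\mat{A}+(1-\theta)\mat{H})^2}{\theta\tr\mat{A}+(1-\theta)\tr\mat{H}} = \frac{\theta(1-\theta)\,\tr\mat{A}\,\tr\mat{H}}{\theta\tr\mat{A}+(1-\theta)\tr\mat{H}}\Bigl(\frac{\mat{A}}{\tr\mat{A}}-\frac{\mat{H}}{\tr\mat{H}}\Bigr)^2
\end{equation*}
by clearing the common denominator $(\theta\tr\mat{A}+(1-\theta)\tr\mat{H})\,\tr\mat{A}\,\tr\mat{H}$ and expanding both sides. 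Since $(\cdot)^2$ applied to a Hermitian matrix is psd and the scalar coefficient is nonnegative, the right-hand side is psd, which is the required inequality.

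For monotonicity \cref{eq:monotonicity}, I would first perform the reduction
\begin{equation*}
\mat{\Phi}(\mat{A}+\mat{H})-\mat{\Phi}(\mat{A}) = \int_0^1 D\mat{\Phi}(\mat{A}+t\mat{H})[\mat{H}]\,dt,
\end{equation*}
so it suffices to show $D\mat{\Phi}(\mat{B})[\mat{H}]\succeq\mathbf{0}$ for all psd $\mat{B},\mat{H}$. Differentiating \cref{eq:Phi} at $\mat{B}$ and abbreviating $\mat{C}\coloneqq\mat{B}/\tr\mat{B}$ yields $D\mat{\Phi}(\mat{B})[\mat{H}] = \mat{H} - \mat{C}\mat{H} - \mat{H}\mat{C} + \tr(\mat{H})\,\mat{C}^2$. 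By linearity in $\mat{H}$ it is enough to treat the rank-one case $\mat{H}=\vec{v}\vec{v}^*$, and for any test vector $\vec{w}$ a direct computation with $a\coloneqq\vec{w}^*\vec{v}$ and $b\coloneqq\vec{w}^*\mat{C}\vec{v}$ rearranges the quadratic form into the completion of the square
\begin{equation*}
\vec{w}^* D\mat{\Phi}(\mat{B})[\vec{v}\vec{v}^*]\vec{w} = |a-b|^2 + \bigl(\lVert\vec{v}\rVert^2\lVert\mat{C}\vec{w}\rVert^2 - |b|^2\bigr).
\end{equation*}
Cauchy--Schwarz applied to $b = (\mat{C}\vec{w})^*\vec{v}$ bounds $|b|^2\leq\lVert\vec{v}\rVert^2\lVert\mat{C}\vec{w}\rVert^2$, so the expression is nonnegative. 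The delicate step is recognizing the change of variables $\mat{C}=\mat{B}/\tr\mat{B}$ and the square-plus-Cauchy--Schwarz decomposition of the derivative; once that is in place, everything follows mechanically.
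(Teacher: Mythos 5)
Your proof is correct. The positivity and concavity arguments coincide with the paper's: positivity via the spectral bound $\lambda_{\max}(\mat{A}) \le \tr \mat{A}$, and concavity via exactly the same algebraic identity (the paper writes the right-hand side as $\frac{\theta\overline{\theta}}{\theta\tr\mat{A}+\overline{\theta}\tr\mat{H}}\bigl(\sqrt{\tr\mat{H}/\tr\mat{A}}\,\mat{A}-\sqrt{\tr\mat{A}/\tr\mat{H}}\,\mat{H}\bigr)^2$, which equals your expression after pulling the factor $\tr\mat{A}\tr\mat{H}$ out of the square). Where you genuinely diverge is monotonicity. The paper derives it as a two-line corollary of concavity: since $\mat{\Phi}$ is positively homogeneous, concavity at $\theta=1/2$ gives superadditivity, $\mat{\Phi}(\mat{A}+\mat{H}) = 2\,\mat{\Phi}\bigl(\tfrac{\mat{A}+\mat{H}}{2}\bigr) \succeq \mat{\Phi}(\mat{A})+\mat{\Phi}(\mat{H}) \succeq \mat{\Phi}(\mat{A})$, reusing positivity at the end. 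Your route instead integrates the directional derivative $D\mat{\Phi}(\mat{A}+t\mat{H})[\mat{H}]$ along the segment, reduces to rank-one perturbations by linearity of the derivative in $\mat{H}$, and closes with a completion of the square plus Cauchy--Schwarz; this is more computational but self-contained (it does not lean on concavity), and it yields the slightly stronger local fact that the derivative of $\mat{\Phi}$ in any psd direction is psd. Both are valid; the paper's homogeneity trick is the more economical of the two. One small point common to both arguments: the segment argument needs $\tr(\mat{A}+t\mat{H})>0$ for all $t\in[0,1]$, i.e.\ $\mat{A}\neq\mat{0}$, but the paper's identities carry the same implicit convention, so this is not a defect of your proposal.
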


The second lemma describes how the trace of the residual declines after multiple steps
of the \RPCholesky procedure.  This is the key new ingredient in our argument.  The proof appears in \Cref{sec:contraction_proof}.

\begin{lemma}[Contraction rate] \label{lem:trace_bound}
Consider the $k$-fold composition of the expected residual~\eqref{eq:Phi}:
\begin{equation*}
    \mat{\Phi}^{\circ k} \coloneqq \underbrace{\mat{\Phi} \circ \mat{\Phi} \circ \cdots \circ \mat{\Phi}}_{\textnormal{$k$ times}}
    \quad\text{for each $k \in \mathbb{N}$.}
\end{equation*}
Fix $r \in \mathbb{N}$.
For each psd matrix $\mat{H}$ and each $\Delta > 0$,
\begin{equation*}
    \tr \mat{\Phi}^{\circ k}(\mat{H}) \le \tr (\mat{H} - \lowrank{\mat{H}}_r) + \Delta \tr \mat{H} \quad \text{when} \quad
    k \ge \frac{r \tr \bigl(\mat{H} - \lowrank{\mat{H}}_r\bigr)}{\Delta \tr \mat{H}} + r \log_+ \biggl( \frac{1}{\Delta} \biggr).
\end{equation*}
\end{lemma}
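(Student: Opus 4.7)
My plan is to reduce the matrix recursion $\mat{A}_k := \mat{\Phi}^{\circ k}(\mat{H})$ to a scalar recursion for the trace $\tau_k := \tr \mat{A}_k$ and then apply a one-dimensional integral-comparison argument to bound the number of iterations.

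The first step is to diagonalize. Because $\mat{\Phi}(\mat{A}) = \mat{A} - \mat{A}^2/\tr\mat{A}$ is a rational function of its argument, it preserves the eigenbasis of $\mat{H}$: writing $\mat{H} = \sum_i \lambda_i \vec{v}_i\vec{v}_i^*$ with $\lambda_1 \ge \lambda_2 \ge \cdots$, we have $\mat{A}_k = \sum_i \mu_i^{(k)} \vec{v}_i\vec{v}_i^*$ where the eigenvalues obey $\mu_i^{(k+1)} = \mu_i^{(k)}(1 - \mu_i^{(k)}/\tau_k)$. Since $\mu_i^{(k)} \le \tau_k$, each sequence $\{\mu_i^{(k)}\}_k$ is nonnegative and non-increasing. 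Setting $\epsilon := \tr(\mat{H} - \lowrank{\mat{H}}_r) = \sum_{i>r}\lambda_i$ and $\rho_k := \sum_{i=1}^r \mu_i^{(k)}$, monotonicity yields the two crucial bounds $\tau_k - \rho_k = \sum_{i>r}\mu_i^{(k)} \le \epsilon$ (hence $\rho_k \ge \tau_k - \epsilon$) and, by Jensen's inequality, $\tr(\mat{A}_k^2) = \sum_i (\mu_i^{(k)})^2 \ge \rho_k^2/r$.

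Combining these with the identity $\tau_{k+1} = \tau_k - \tr(\mat{A}_k^2)/\tau_k$ gives, with $u_k := \tau_k - \epsilon$,
\begin{equation*}
    u_k - u_{k+1} \ge \frac{u_k^2}{r(u_k+\epsilon)},
\end{equation*}
valid whenever $\tau_k > \epsilon$ (in the complementary case the target $\tau_k \le \epsilon + \Delta\tr\mat{H}$ already holds). Because $g(u) := r(u+\epsilon)/u^2$ is strictly decreasing on $(0,\infty)$, the recurrence says $(u_k-u_{k+1})\,g(u_k) \ge 1$, and telescoping against the integral yields
\begin{equation*}
    m \le \sum_{k=0}^{m-1}(u_k - u_{k+1})\,g(u_k) \le \int_{u_m}^{u_0} g(u)\,du = r\log\!\biggl(\frac{u_0}{u_m}\biggr) + r\epsilon\!\left(\frac{1}{u_m} - \frac{1}{u_0}\right).
\end{equation*}
Using $u_0 \le \tr\mat{H}$ and dropping $-r\epsilon/u_0 \le 0$, the contrapositive gives the advertised sufficient condition: if $m \ge r\epsilon/(\Delta\tr\mat{H}) + r\log_+(1/\Delta)$, then $u_m \le \Delta\tr\mat{H}$, i.e., $\tau_m \le \epsilon + \Delta\tr\mat{H}$.

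The step I expect to be delicate is extracting the correct constant in front of the logarithmic factor. A routine two-phase argument—splitting the iterations at $u_k = \epsilon$ into a geometric regime and a slow regime—produces a bound of the right shape but with $2r\log_+(1/\Delta)$, because in the geometric regime the contraction factor is only $1 - 1/(2r)$ rather than $1 - 1/r$. The Riemann-sum argument above avoids this loss by handling both regimes simultaneously: the integrand splits cleanly as $g(u) = r/u + r\epsilon/u^2$, and the two pieces produce the logarithmic bound $r\log_+(1/\Delta)$ and the additive bound $r\epsilon/(\Delta\tr\mat{H})$, respectively, in one unified estimate.
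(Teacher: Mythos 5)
Your proof is correct. It shares the overall skeleton of the paper's argument---diagonalize, reduce to a scalar recursion of the form $x \mapsto x - x^2/(r(x+b))$, and integrate $\int (r/u + rb/u^2)\,\diff u$ to count iterations---but the way you arrive at the scalar differential inequality is genuinely different. The paper invokes the concavity and permutation invariance of $\mat{\Lambda} \mapsto \tr \mat{\Phi}^{\circ k}(\mat{\Lambda})$ to argue that the worst case is a two-level spectrum (top $r$ eigenvalues averaged to $a/r$, tail averaged to $b/q$), then sends the tail multiplicity $q \to \infty$ to freeze the tail at $b$ and obtain the recursion for $\overline{a}^{(k)}$. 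You bypass that worst-case reduction entirely: individual monotonicity of each eigenvalue sequence gives $\sum_{i>r}\mu_i^{(k)} \le \epsilon$, and Cauchy--Schwarz on the top $r$ eigenvalues gives $\tr(\mat{A}_k^2) \ge \rho_k^2/r$, which together yield $u_k - u_{k+1} \ge u_k^2/(r(u_k+\epsilon))$ in two lines. Your endgame also differs slightly---a discrete Riemann-sum/telescoping bound against the decreasing integrand $g(u) = r(u+\epsilon)/u^2$ rather than the paper's comparison of the discrete iteration to a continuous-time ODE trajectory---and it lands on the identical integral, hence the identical constants. What each buys: your route is more elementary and self-contained (it does not need the concavity lemma for this step, though the paper still needs it for the Jensen argument in the main theorem), while the paper's reduction additionally identifies the extremal spectra, which it reuses almost verbatim in the DPP lower-bound analysis. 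The edge cases you flag (iterations where $\tau_k \le \epsilon$, and $\Delta \ge 1$) are handled correctly by monotonicity of $\tau_k$ and the $\log_+$.
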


Last, we present a bound which shows that the error after $k$ steps of \RPCholesky
is comparable with the error in the best rank-$k$ approximation.
This lemma improves on an earlier result~\cite[Prop.~2]{DV06} by reducing a $(k+1)!$ factor to $2^k$, but the proof is entirely different.
This bound is responsible for the final term in \cref{eq:alternative_bound}, and the proof appears in \Cref{sec:doubling_proof}.

\begin{lemma}[Error doubling] \label{lem:single_step}
  For each psd matrix $\mat{A}$, the residual matrix $\mat{A}^{(k)}$ after applying $k$ steps of \RPCholesky satisfies
  \begin{equation*} \label{eq:repeated_doubling}
    \expect \tr \mat{A}^{(k)} \le 2^k \tr(\mat{A} - \lowrank{\mat{A}}_k)
    \quad\text{for each $k \in \mathbb{N}$.}
  \end{equation*}
\end{lemma}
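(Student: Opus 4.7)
The plan is to prove the bound by induction on $k$. The base case $k = 0$ is immediate because $\mat{A}^{(0)} = \mat{A}$ and $\lowrank{\mat{A}}_0 = \mat{0}$. For the inductive step, I would condition on the first pivot $s$, drawn with probability $p_s = a_{ss}/\tr\mat{A}$. Given $s$, the remaining $k-1$ steps of \RPCholesky act on the Schur complement $\mat{A}^{(s)}$, so the inductive hypothesis applied to $\mat{A}^{(s)}$ delivers
\begin{equation*}
\expect\bigl[\tr \mat{A}^{(k)} \bigm| s_1 = s\bigr] \le 2^{k-1} \tr\bigl(\mat{A}^{(s)} - \lowrank{\mat{A}^{(s)}}_{k-1}\bigr),
\end{equation*}
so after taking the outer expectation the whole argument reduces to the \emph{key averaging inequality}
\begin{equation*}
\sum_s p_s \, \tr\bigl(\mat{A}^{(s)} - \lowrank{\mat{A}^{(s)}}_{k-1}\bigr) \le 2 \, \tr\bigl(\mat{A} - \lowrank{\mat{A}}_k\bigr).
\end{equation*}

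To establish this inequality I would choose an explicit suboptimal rank-$(k-1)$ competitor to $\lowrank{\mat{A}^{(s)}}_{k-1}$, namely the Schur complement $\mat{C}^{(s)}$ of $\mat{C} := \lowrank{\mat{A}}_k$. Schur complements are monotone under the PSD order, which follows from the variational description $\mat{A}^{(s)} = \max\{\mat{X} : \mat{0} \preceq \mat{X} \preceq \mat{A}, \, \mat{X}\mat{e}_s = \mat{0}\}$, so $\mat{C}^{(s)} \preceq \mat{A}^{(s)}$ and $\mat{C}^{(s)}$ has rank at most $k-1$ whenever $c_{ss} > 0$. This lets me upper-bound $\tr(\mat{A}^{(s)} - \lowrank{\mat{A}^{(s)}}_{k-1})$ by $\tr(\mat{A}^{(s)} - \mat{C}^{(s)}) = \tr(\mat{A} - \mat{C}) - \|\mat{A}(:,s)\|^2/a_{ss} + \|\mat{C}(:,s)\|^2/c_{ss}$. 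Averaging against $p_s$ collapses the second place to $\|\mat{A}\|_F^2/\tr\mat{A}$; for the third term I would use the joint eigendecomposition $\mat{A} = \sum_i \lambda_i \tilde{\mat{u}}_i \tilde{\mat{u}}_i^*$, split the pivot contributions into $i \le k$ and $i > k$, and apply the elementary inequality $\sum_{i \le k} \lambda_i^2 \tilde{u}_{i,s}^2 \big/ \sum_{i \le k} \lambda_i \tilde{u}_{i,s}^2 \le \lambda_1 \le \tr\mat{A}$. All the bookkeeping cancels to leave exactly $2\,\tr(\mat{A} - \mat{C})$.

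The main obstacle is the bookkeeping in this last algebraic step: one must verify that the seemingly large negative contribution $-\|\mat{A}\|_F^2/\tr\mat{A}$ cancels the ``in-range'' piece $\|\mat{C}\|_F^2/\tr\mat{A}$ coming from the weighted-average bound, leaving only an ``out-of-range'' residual $\sum_{i > k} \lambda_i(\lambda_1 - \lambda_i)/\tr\mat{A}$ that is absorbed into one extra copy of $\tr(\mat{A} - \mat{C})$ via $\lambda_1 \le \tr\mat{A}$. The edge case $c_{ss} = 0$, which forces $\mat{C}\mat{e}_s = \mat{0}$, is handled by replacing $\mat{C}^{(s)}$ with the rank-$(k-1)$ truncation $\sum_{i \le k-1} \lambda_i \tilde{\mat{u}}_i \tilde{\mat{u}}_i^*$ of $\mat{C}$, which still satisfies $\mat{X}\mat{e}_s = \mat{0}$ and therefore $\mat{X} \preceq \mat{A}^{(s)}$. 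Finally, I note that a purely Jensen-type argument routed through the concave map $\mat{\Phi}$ of \cref{lem:concave} cannot succeed, because on a rank-$k$ matrix one has $\mat{A}^{(k)} = \mat{0}$ almost surely while $\mat{\Phi}^{\circ k}(\mat{A})$ is typically strictly positive; exploiting the rank-reducing nature of the actual random trajectory (and not just its conditional mean) is essential.
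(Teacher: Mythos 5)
Your proposal is correct, and while it shares the paper's overall skeleton---reduce to the one-step inequality $\expect\bigl[\tr\bigl(\mat{A}^{(1)} - \lowrank{\mat{A}^{(1)}}_{k-1}\bigr)\bigr] \le 2\tr(\mat{A} - \lowrank{\mat{A}}_k)$ and iterate---it proves that key inequality by a genuinely different mechanism. The paper introduces a hierarchical resampling of the pivot (a ``good'' event driven by $\diag(\mat{AP})$ and a ``bad'' event driven by $\diag(\mat{AP}_\perp)$, where $\mat{P}$ projects onto the top-$k$ eigenspace), handles the bad event by Weyl monotonicity, and handles the good event via the Ky Fan principle with the explicit test vector $\mat{P}\mathbf{e}_{s_1}/\norm{\mat{P}\mathbf{e}_{s_1}}$; the two events contribute the terms $\lambda_k(\mat{A})/\tr\mat{A}$ and $\lambda_1(\mat{A})/\tr\mat{A}$ that sum to at most $1$. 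You instead exhibit an explicit rank-$(k-1)$ competitor, the Schur complement of $\mat{C} = \lowrank{\mat{A}}_k$ at the pivot, justified by the extremal (shorted-operator) characterization of Schur complements, and then average directly; your ``in-range/out-of-range'' cancellation $\norm{\mat{C}}_{\rm F}^2 - \norm{\mat{A}}_{\rm F}^2 + \lambda_1 \tr(\mat{A}-\mat{C}) = \sum_{i>k}\lambda_i(\lambda_1 - \lambda_i) \le \tr\mat{A}\cdot\tr(\mat{A}-\mat{C})$ checks out, as does the $c_{ss}=0$ fallback (which plays the role of the paper's bad event and contributes at most $\lambda_k \tr(\mat{A}-\mat{C})/\tr\mat{A}$ extra). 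What your route buys is that it avoids the two-stage sampling trick entirely and makes the source of the factor $2$ transparent as a single spectral residual; what the paper's route buys is the slightly sharper intermediate constant $1 + (\lambda_1(\mat{A})+\lambda_k(\mat{A}))/\tr\mat{A}$ stated explicitly before it is weakened to $2$. Your closing observation that a Jensen argument through the concave map of \cref{lem:concave} cannot prove this lemma---because $\mat{\Phi}^{\circ k}(\mat{A})$ need not vanish on rank-$k$ inputs while $\mat{A}^{(k)}$ does---is correct and is precisely why the paper also abandons $\mat{\Phi}$ for this step.
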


With these results at hand, we quickly establish the main error bound for \RPCholesky.

\begin{proof}[Proof of \Cref{thm:main_bound}]
  Fix a psd matrix $\mat{A}$.  By the concavity~\eqref{eq:concavity} of the expected residual map~\eqref{eq:Phi} and a matrix version of Jensen's inequality~\cite[Thm.~4.16]{Car09}, the residual matrices satisfy
  \begin{equation*}
    \expect \mat{A}^{(j)} = \expect\bigl[\expect \bigl[ \mat{A}^{(j)} \,\big|\, \mat{A}^{(j-1)} \bigr]\bigr] = \expect \mat{\Phi}(\mat{A}^{(j-1)}) \preceq \mat{\Phi}(\expect \mat{A}^{(j-1)})
    \quad\text{for each $j \in \{1, \dots, k\}$.}
  \end{equation*}
  Next, by monotonicity~\eqref{eq:monotonicity} of $\mat{\Phi}$ and the last display,
  \begin{equation*}
  \label{eq:intermediate}
      \expect \mat{A}^{(k)} 
      \preceq \mat{\Phi}(\expect \mat{A}^{(k-1)})
      \preceq \mat{\Phi} \circ \mat{\Phi}(\expect \mat{A}^{(k-2)})
      \preceq \cdots \preceq \mat{\Phi}^{\circ k}(\mat{A}).
  \end{equation*}
  Using the fact that the trace is linear and preserves the psd order,
  \begin{equation*}
      \expect \tr\bigl(\mat{A} - \mat{\hat{A}}^{(k)} \bigr) =
      \expect \tr \mat{A}^{(k)} \leq \tr \mat{\Phi}^{\circ k}(\mat{A}).
  \end{equation*}
  Next, we apply \Cref{lem:trace_bound} with $\mat{H} = \mat{A}$ and $\Delta = \varepsilon \eta$ to see that
  \begin{equation*}
  \expect \tr\bigl(\mat{A} - \mat{\hat{A}}^{(k)}\bigr) 
  \leq (1 + \varepsilon) \cdot \tr (\mat{A} - \lowrank{\mat{A}}_r)
  \quad\text{when}\quad
  k \geq \frac{r}{\varepsilon} + r \log_+\biggl(\frac{1}{\varepsilon \eta}\biggr),
  \end{equation*}
  where we have recognized the relative error $\eta$.
  Last, we can replace $\log_+$ with $\log$ because the statement holds trivially for any Nystr\"om approximation with any number of columns if $\varepsilon \eta \geq 1$.
  
  By a similar argument,
  \begin{equation*}
  \label{eq:k-r}
    \expect \tr\bigl(\mat{A} - \mat{\hat{A}}^{(k)}\bigr) = \expect \tr \mat{A}^{(k)} \leq \tr \mat{\Phi}^{\circ (k - r)}\bigl(\expect \mat{A}^{(r)}\bigr).
  \end{equation*}
  We can apply \Cref{lem:trace_bound} with $\mat{H} = \expect \mat{A}^{(r)}$
  and $\Delta = \varepsilon \eta \cdot \tr \mat{A} \slash \expect \tr \mat{A}^{(r)}$
  to see that
  \begin{equation}
  \label{eq:combine1}
      \expect \tr\bigl(\mat{A} - \mat{\hat{A}}^{(k)}\bigr) \leq \tr\bigl( \expect \mat{A}^{(r)} - \lowrank{ \expect \mat{A}^{(r)} }_r \bigr)
      + \varepsilon \tr (\mat{A} - \lowrank{\mat{A}}_r)
   \end{equation}
   provided the number of columns satisfies
   \begin{equation}
   \label{eq:combine2}
      k-r \geq \frac{r \tr\bigl( \expect \mat{A}^{(r)} - \lowrank{ \expect \mat{A}^{(r)} }_r \bigr)}
      {\varepsilon \tr (\mat{A} - \lowrank{\mat{A}}_r)} + r \log_+\biggl(\frac{\expect \tr \mat{A}^{(r)}}{\varepsilon \tr (\mat{A} - \lowrank{\mat{A}}_r)}\biggr).
  \end{equation}
  This bound can be simplified as follows.
  Observe that the (random) residual matrix $\mat{A}^{(r)}$ is a Schur complement of $\mat{A}$, so it satisfies $\expect \mat{A}^{(r)} \preceq \mat{A}$.
  By the Ky Fan variational principle~\cite[Thm.~8.17]{Zha11}, the best rank-$r$ approximation error in the trace norm is monotone with respect to the psd order, so
  \begin{equation*}
  \tr\bigl( \expect \mat{A}^{(r)} - \lowrank{ \expect \mat{A}^{(r)} }_r \bigr)
    \leq \tr( \mat{A} - \lowrank{ \mat{A} }_r ).
  \end{equation*}
  Additionally, \Cref{lem:single_step} guarantees that $\expect \tr \mat{A}^{(r)} \leq 2^r \tr( \mat{A} - \lowrank{\mat{A}}_r)$.
  Using these facts,
  we can simplify \eqref{eq:combine1}--\eqref{eq:combine2} to show that
  \begin{equation*}
  \expect \tr\bigl(\mat{A} - \mat{\hat{A}}^{(k)}\bigr) \leq 
  (1 + \varepsilon) \cdot \tr (\mat{A} - \lowrank{\mat{A}}_r)
  \quad\text{when}\quad
  k \geq r + \frac{r}{\varepsilon} + r \log_+\biggl(\frac{2^r}{\varepsilon}\biggr),
  \end{equation*}
  This completes the proof of the second bound.
\end{proof}

\subsection{\texorpdfstring{Proof of \Cref{lem:concave}}{Proof of Lemma 3.2}} \label{sec:concave_proof}
    Let $\mat{A}, \mat{H}$ be psd matrices, and recall the definition~\eqref{eq:Phi} of the expected residual map $\mat{\Phi}$.
    First, to prove that $\mat{\Phi}$ is positive, note that 
  \begin{equation*}
  \mat{\Phi}(\mat{H}) = \biggl( \Id - \frac{\mat{H}}{\tr \mat{H}} \biggr)\mat{H}  \succeq \mathbf{0}. 
  \end{equation*}
  Next, to establish concavity, we choose $\theta \in [0, 1]$, set $\overline{\theta} \coloneqq 1-\theta$, and make the calculation
  \begin{equation*}
    \mat{\Phi}(\theta \mat{A} + \overline{\theta} \mat{H})
    - \theta \mat{\Phi}(\mat{A})
    - \overline{\theta} \mat{\Phi}(\mat{H}) \\
    = \frac{\theta\overline{\theta}}{\theta \tr \mat{A} + \overline{\theta} \tr \mat{H}}
    \biggl( \sqrt{\frac{\tr \mat{H}}{\tr \mat{A}}} \mat{A} - \sqrt{\frac{\tr \mat{A}}{\tr \mat{H}}} \mat{H} \biggr)^2 \succeq \mathbf{0}.
  \end{equation*}
  Last, to establish monotonicity, observe that $\mat{\Phi}$ is positive homogeneous; that is, $\mat{\Phi}(\tau \mat{A}) = \tau \mat{\Phi}(\mat{A})$ for $\tau \ge 0$.
  Invoking the concavity property~\eqref{eq:concavity} with $\theta = 1/2$,
  \begin{equation*}
    \mat{\Phi}(\mat{A} + \mat{H}) = 2 \, \mat{\Phi} \biggl( \frac{\mat{A}+\mat{H}}{2} \biggr) \succeq \mat{\Phi}(\mat{A}) + \mat{\Phi}(\mat{H}) \succeq \mat{\Phi}(\mat{A}).
  \end{equation*}
  We have used the positivity of $\mat{\Phi}(\mat{H})$ in the last step. \hfill $\proofbox$

\subsection{\texorpdfstring{Proof of \Cref{lem:trace_bound}}{Proof of Lemma 3.3}}
\label{sec:contraction_proof}

We break the proof into several steps.

\subsubsection{Step 1: Reduction to diagonal case}

    First, we show that it suffices to consider the case of a diagonal matrix.  Let $\mat{H}$ be an $N \times N$ psd matrix
    with eigendecomposition $\mat{H} = \mat{V}\mat{\Lambda}\mat{V}^*$.  The definition~\eqref{eq:Phi} of the expected residual map implies that
    \begin{equation*}
      \mat{\Phi}(\mat{H}) = \mat{V} \mat{\Phi}(\mat{\Lambda}) \mat{V}^*.
    \end{equation*}
    By iteration, the same relation holds with $\mat{\Phi}^{\circ k}$ in place of $\mat{\Phi}$.
    In particular, $\tr \mat{\Phi}^{\circ k}(\mat{H}) = \tr \mat{\Phi}^{\circ k}(\mat{\Lambda})$.
    Therefore, we may restrict our attention to the diagonal case where $\mat{H} = \mat{\Lambda}$.

\subsubsection{Step 2: Identification of worst-case matrix}

    Second, we obtain an upper bound on $\tr \mat{\Phi}^{\circ k}(\mat{\Lambda})$.
    We accomplish this goal by identifying the worst-case set of eigenvalues.
    Because the map $\mat{\Lambda} \mapsto \tr \mat{\Phi}^{\circ k}(\mat{\Lambda})$
    is concave and invariant under permutations of its arguments,
    averaging together some of the eigenvalues $\lambda_1, \ldots, \lambda_N$ of $\mat{\Lambda}$ can only increase the value of $\tr \mat{\Phi}^{\circ k}(\mat{\Lambda})$.
    Therefore, by introducing the function
    \begin{equation*}
      f_k(a,b,r,q) \coloneqq \tr \mat{\Phi}^{\circ k}\biggl( \diag\biggl(\,\underbrace{\frac{a}{r},\ldots,\frac{a}{r}}_{\textnormal{$r$ times}},\underbrace{\frac{b}{q},\ldots,\frac{b}{q}}_{\textnormal{$q$ times}}\,\biggr) \biggr),
    \end{equation*}
    we obtain the upper bound
    \begin{equation}
    \label{eq:combine_with}
      \tr \mat{\Phi}^{\circ k}(\mat{\Lambda}) 
      \leq f_k( \tr \lowrank{\mat{\Lambda}}_r,
      \tr ( \mat{\Lambda} - \lowrank{\mat{\Lambda}}_r), r, N - r).
    \end{equation}
    For further reference, we note that $f_k(a, b, r, q)$ is weakly increasing as a function of $q$.
    Indeed, for every tuple $(a, b, r, q)$, 
    \begin{align*}
        f_k(a, b, r, q)
        &= \tr \mat{\Phi}^{\circ k}\biggl( \diag\biggl(\,\underbrace{\frac{a}{r},\ldots,\frac{a}{r}}_{\textnormal{$r$ times}},\underbrace{\frac{b}{q},\ldots,\frac{b}{q}}_{\textnormal{$q$ times}}\,\biggr)\biggr) = \tr \mat{\Phi}^{\circ k}\biggl( \diag\biggl(\,\underbrace{\frac{a}{r},\ldots,\frac{a}{r}}_{\textnormal{$r$ times}},\underbrace{\frac{b}{q},\ldots,\frac{b}{q}}_{\textnormal{$q$ times}},0\biggr)\biggr) \\
          &\le \tr \mat{\Phi}^{\circ k}\biggl( \diag\biggl(\,\underbrace{\frac{a}{r},\ldots,\frac{a}{r}}_{\textnormal{$r$ times}},\underbrace{\frac{b}{q+1},\ldots,\frac{b}{q+1}}_{\textnormal{$q+1$ times}}\,\biggr)\biggr) = f_k(a,b,r,q+1).
    \end{align*}
    We have exploited the fact that $\mat{\Phi}$ is defined for matrices of every dimension.

\subsubsection{Step 3: Dynamics of the error}

    Next, we derive a worst-case expression for the error $f_k(a, b, r, q)$.
    We accomplish this task by identifying a discrete-time dynamical system
    that describes the evolution of the residuals.
    For each $k = 0, 1, 2, \ldots$,
    define the nonnegative quantities $a^{(k)}$ and $b^{(k)}$ via the relation
    \begin{equation*}
      \mat{\Phi}^{\circ k} \biggl( \diag \biggl(\, \underbrace{\frac{a}{r},\ldots,\frac{a}{r}}_{\text{$r$ times}}, \underbrace{\frac{b}{q},\ldots,\frac{b}{q}}_{\text{$q$ times}}\,\biggr) \biggr)
      \eqqcolon \diag \biggl(\, \underbrace{\frac{a^{(k)}}{r},\ldots,\frac{a^{(k)}}{r}}_{\text{$r$ times}}, \underbrace{\frac{b ^{(k)}}{q},\ldots,\frac{b^{(k)}}{q}}_{\text{$q$ times}} \,\biggr).
    \end{equation*}
    By the definition~\eqref{eq:Phi} of the expected residual map $\mat{\Phi}$, the quantities $a^{(k)}$ and $b^{(k)}$ satisfy the recurrence relations
    \begin{equation*}
      a^{(k)} - a^{(k-1)} = -\frac{ \bigl( a^{(k-1)} \bigr)^2 }{r\bigl(a^{(k-1)} + b^{(k-1)}\bigr)}
      \quad\text{and}\quad
      b^{(k)} - b^{(k-1)} = -\frac{ \bigl( b^{(k-1)} \bigr)^2 }{q\bigl(a^{(k-1)} + b^{(k-1)}\bigr)}
    \end{equation*}
    with initial conditions $a^{(0)} = a$ and $b^{(0)} = b$.
    This construction guarantees $f_k(a, b, r, q) = a^{(k)} + b^{(k)}$.
    Additionally, the quantities $a^{(k)}$ and $b^{(k)}$ converge as $q \rightarrow \infty$
    to limiting values $\overline{a}^{(k)}$ and $\overline{b}^{(k)} \equiv b$, where the sequence $\overline{a}^{(k)}$ satisfies
    \begin{equation*}
      \overline{a}^{(k)} - \overline{a}^{(k-1)} = -\frac{ \bigl( \overline{a}^{(k-1)} \bigr)^2 }{r\bigl(\overline{a}^{(k-1)} + b\bigr)}
      \quad\text{with initial condition $\overline{a}^{(0)} = a$.}
    \end{equation*}
    It follows that 
    \begin{equation*}
        f_k(a, b, r, q)
        \leq \overline{a}^{(k)} + b \leq a + b.
    \end{equation*}
    We have used the facts that $f_{k}(a,b,r,q)$ is increasing in $q$ and $\overline{a}^{(k)}$ is decreasing in $k$.

\subsubsection{Step 4: Comparison with continuous-time dynamics}

    All that remains is to determine how quickly $\overline{a}^{(k)}$ decreases as a function of $k$.
    To that end, we pass from discrete time to continuous time.
    At each instant $t = 0, 1, 2, \ldots$,
    the discrete-time process $\overline{a}^{(t)}$ is bounded from above
    by the continuous-time process $x(t)$ satisfying the ODE
    \begin{equation*}
      \frac{\diff}{\diff t} x(t) = -\frac{x(t)^2}{r(x(t) + b)} \quad\text{with initial condition $x(0) = a$.}
    \end{equation*}
    The comparison between discrete- and continuous-time processes holds because $x \mapsto -x^2 \slash (rx + rb)$ is decreasing over the range $x \in (0, \infty)$.
    Next, assuming $\Delta (a + b) \leq a$, we can use separation of variables to solve for the time $t_{\star}$ at which $x(t_{\star}) = \Delta (a + b)$:
    \begin{multline*}
      t_{\star} = 
      \int_{x=a}^{\Delta (a + b)}
      -\frac{r(x + b)}{x^2} \,\diff x
      = rb \biggl( \frac{1}{\Delta (a + b)} - \frac{1}{a}\biggr) + r \log \biggl( \frac{a}{\Delta (a + b)} \biggr)
      \leq \frac{rb}{\Delta( a + b)} 
      + r \log\biggl( \frac{1}{\Delta} \biggr).
    \end{multline*}
    It follows (even for $\Delta (a + b) > a$) that
    \begin{equation*}
      \overline{a}^{(k)} \leq \Delta (a + b)
      \quad \textnormal{when} \quad
    k \geq
    \frac{rb}{\Delta (a + b)} + r \log_+\biggl( \frac{1}{\Delta} \biggr).
    \end{equation*}
    To conclude, substitute $a = \tr \lowrank{\mat{\Lambda}}_r$ and $b = \tr(\mat{\Lambda} - \lowrank{\mat{\Lambda}}_r)$ and combine with \eqref{eq:combine_with}.
    \hfill $\proofbox$
\subsection{\texorpdfstring{Proof of \Cref{lem:single_step}}{Proof of Lemma 3.1}} \label{sec:doubling_proof}

Let $\mat{P}$ denote the orthogonal projection onto the $k$ dominant eigenvectors of $\mat{A}$ and set $\mat{P}_\perp \coloneqq \Id - \mat{P}$.
Apply the Ky Fan variational principle \cite[Thm.~8.17]{Zha11} to write
\begin{equation*}
\tr\bigl(\mat{A}^{(1)} - \lowrank{\mat{A}^{(1)}}_{k-1}\bigr) 
= \min \Biggl\{ \sum_{j=k}^N \vec{u}_j^* \mat{A}^{(1)} \vec{u}_j \,:\, \vec{u}_k,\ldots,\vec{u}_N \textrm{ orthonormal} \Biggr\}.
\end{equation*}
Choose the vectors $\vec{u}_{k+1},\ldots,\vec{u}_N$ to be unit-norm eigenvectors $\vec{v}_{k+1}(\mat{A}),\ldots,\vec{v}_N(\mat{A})$ associated with the smallest eigenvalues.
To choose the last vector $\vec{u}_k$, let us separately consider the cases $\mat{P}(s_1, s_1) = 0$ and $\mat{P}(s_1, s_1) > 0$, where $\mat{P}(i,i)$ denotes the $(i, i)$-entry of $\bm{P}$.

On the event $\mat{P}(s_1,s_1) = 0$, choose $\vec{u}_k \coloneqq \vec{v}_1(\mat{A})$ and apply the crude bound
\begin{equation} \label{eq:combo1}
\begin{split}
\tr\bigl( \mat{A}^{(1)} - \lowrank{\mat{A}^{(1)}}_{k-1} \bigr)
&\leq \sum_{j = k}^{N} \vec{u}_j^* \mat{A}^{(1)} \vec{u}_j \\
&\leq \sum_{j = k}^{N} \vec{u}_j^* \mat{A} \vec{u}_j
=
\tr(\mat{A} - \lowrank{\mat{A}}_{k}) + \lambda_1(\mat{A}).
\end{split}
\end{equation}
This relation holds because $\mat{A}^{(1)} \preceq \mat{A}$ and $\sum_{j=k+1}^N \lambda_j(\mat{A}) = \tr(\mat{A} - \lowrank{\mat{A}}_{k})$.

On the event $\mat{P}(s_1,s_1) > 0$, 
choose $\vec{u}_k \coloneqq \mat{P} \mathbf{e}_{s_1} / \lVert \smash{\mat{P}\mathbf{e}_{s_1}} \rVert$
, and observe that $\vec{u}_k$ is orthonormal to the other vectors by the choice of $\mat{P}$.
This gives the bound
\begin{equation} \label{eq:towards_good}
\begin{split}
\tr\bigl(\mat{A}^{(1)} - \lowrank{\mat{A}^{(1)}}_{k-1}\bigr) 
&\leq \sum_{j=k+1}^N \vec{u}_j^* \mat{A}^{(1)} \vec{u}_j + \frac{\mathbf{e}_{s_1}^* \mat{P}\mat{A}^{(1)}\mat{P}\mathbf{e}_{s_1}}{\mathbf{e}_{s_1}^*\mat{P}\mathbf{e}_{s_1}} \\
&\leq \tr(\mat{A} - \lowrank{\mat{A}}_k) + \frac{(\mat{P}\mat{A}^{(1)}\mat{P})(s_1,s_1)}{\mat{P}(s_1,s_1)}.
\end{split}
\end{equation}
Using the facts that $\mat{A}^{(1)} = \mat{A} - \mat{A} \mathbf{e}_{s_1}\mathbf{e}_{s_1}^* \mat{A} / \mat{A}(s_1,s_1)$ and $\mat{PA} = \mat{AP}$, it follows
\begin{equation*}
(\mat{P}\mat{A}^{(1)}\mat{P})(s_1,s_1) = (\mat{A}\mat{P})(s_1,s_1) - \frac{((\mat{A}\mat{P})(s_1,s_1))^2}{\mat{A}(s_1,s_1)}
= \frac{(\mat{A}\mat{P})(s_1,s_1)(\mat{A}\mat{P}_\perp)(s_1,s_1)}{\mat{A}(s_1,s_1)}.
\end{equation*}
Combine the bounds \eqref{eq:combo1} and \eqref{eq:towards_good} and sum over the selection probabilities
$\prob\{s_1 = i\} = \mat{A}(i,i) / \tr \mat{A}$ to evaluate
\begin{equation} \label{eq:combo2}
\begin{split}
\expect \bigl[ \tr\bigl(\mat{A}^{(1)} - \lowrank{\mat{A}^{(1)}}_{k-1}\bigr) \bigr]
&\le \tr(\mat{A} - \lowrank{\mat{A}}_k)
+ \sum_{\mat{P}(i,i) = 0}
\frac{\mat{A}(i,i)}{\tr \mat{A}} \cdot
\lambda_1(\mat{A})
+ \sum_{\mat{P}(i,i) > 0}
\frac{\mat{A}(i,i)}{\tr \mat{A}} \cdot \frac{(\mat{A}\mat{P})(i,i)(\mat{A}\mat{P}_\perp)(i,i)}{\mat{A}(i,i) \mat{P}(i,i)} \\
&\le \tr(\mat{A} - \lowrank{\mat{A}}_k)
+ \sum_{i=1}^N \frac{(\mat{A}\mat{P}_\perp)(i,i)}{\tr \mat{A}} \cdot \lambda_1(\mat{A}) \\
&= \biggl( 1 + \frac{\lambda_1(\mat{A})}{\tr \mat{A}} \biggr) \tr(\mat{A} - \lowrank{\mat{A}}_k).
\end{split}
\end{equation}
The middle line uses the inequalities
\begin{equation*}
    \mat{P}\mathbf{e}_i = \vec{0}
    \implies
    \mat{A}(i,i) = (\mat{A} \mat{P})(i,i) + (\mat{A} \mat{P}_{\perp})(i,i)
    = (\mat{A} \mat{P}_{\perp})(i,i).
\end{equation*}
and $(\mat{A}\mat{P})(i,i) \le \lambda_1(\mat{A}) \mat{P}(i,i)$.

Since $\lambda_1(\mat{A}) \le \tr \mat{A}$, the bound  \eqref{eq:combo2} can be weakened to give
\begin{equation*}
\expect \tr\bigl(\mat{A}^{(1)} - \lowrank{\mat{A}^{(1)}}_{k-1}\bigr)
\leq 2 \tr(\mat{A} - \lowrank{\mat{A}}_{k}).
\end{equation*}
By iterating, we conclude that
\begin{equation*}
\expect \tr \mat{A}^{(k)} 
\le 2 \expect \tr\bigl(\mat{A}^{(k-1)} - \lowrank{\mat{A}^{(k-1)}}_1 \bigr)
\le 4 \expect \tr\bigl(\mat{A}^{(k-2)} - \lowrank{\mat{A}^{(k-2)}}_2 \bigr) \\
\le \cdots \leq
2^k \tr(\mat{A} - \lowrank{\mat{A}}_k).
\end{equation*}
This estimate is the statement of the doubling bound. \hfill $\proofbox$

\section{Conclusion} \label{sec:conclusion}

This work has demonstrated the utility of \RPCholesky for low-rank approximation of a psd matrix $\mat{A} \in \mathbb{C}^{N \times N}$.
\RPCholesky %can be programmed in just a few lines of code, and it
allows us to accelerate many kernel algorithms, such as kernel ridge regression and kernel spectral clustering.
\RPCholesky reduces the computational cost of these kernel methods from $\order(N^3)$ operations to just
$\order(k^2 N)$ operations,
where the approximation rank $k$ can be much smaller than the matrix dimension $N$.

Numerical experiments suggest that \RPCholesky improves over other column Nystr\"om approximation algorithms in terms of floating-point operations and memory footprint.
Given a fixed approximation rank $k$, \RPCholesky requires a very small number of entry evaluations, just $(k + 1) N$.
\RPCholesky typically produces approximations that match or improve on the greedy method, uniform sampling, RLS sampling, and DPP sampling.
Moreover, theoretical error bounds guarantee that \RPCholesky converges nearly as fast as possible in the expected trace norm.

Taken as a whole, this work paves the way for greater use of \RPCholesky in the future.
Additionally, we believe \RPCholesky can be pushed even further, for example, through combinations with accelerated methods for prediction, clustering, and other learning tasks.
The future is indeed bright for this simple yet surprisingly effective algorithm.

\section*{Acknowledgements}

We thank Mateo D{\'i}az, Zachary Frangella, Marc Gilles, Eitan Levin, Eliza O'Reilly, Jonathan Weare, and Aaron Dinner for helpful discussions and corrections.

\appendix

\section{Details of numerical experiments} \label{app:numerical_details}
The numerical experiments in \Cref{sec:compare,sec:rpc} are implemented in Python with code available at \url{https://github.com/eepperly/Randomly-Pivoted-Cholesky}.
Below we provide further implementation details for \RPCholesky, DPP sampling, RLS sampling, and greedy pivoting:

\textbf{\RPCholesky}. All the \RPCholesky results in \Cref{sec:compare,sec:rpc} use the simple \RPCholesky method (\Cref{alg:rpcholesky}) with block size $T = 1$.

\textbf{DPP sampling}. We use the DPP samplers from the DPPy Python package \cite{GPBV19}.
However, the samplers from this package all produce error messages when tried on certain inputs, particularly when $k$ is large and $\mat{A}$ is nearly low-rank.
The theoretically fastest \texttt{alpha} sampler cannot be used for our numerical comparisons since it has error messages for both the \textbf{Smile} and \textbf{Spiral} examples.
For this reason, we used the \texttt{vfx} sampler \cite{DCV19} when assessing the entry access cost of DPP sampling, and we produced \Cref{fig:comparison} using the comparatively slow \texttt{GS} sampler which requires a full eigendecomposition of $\mat{A}$.
All these DPP samplers generate exact samples from the $k$-DPP distribution \eqref{eq:exact_dpp}; we did not test with inexact samplers based on MCMC \cite{AGR16}.

\textbf{RLS sampling}.
Several RLS sampling algorithms have been introduced in the literature,
including recursive ridge leverage scores (RRLS) \cite{MM17}, SQUEAK \cite{CLV17}, and BLESS \cite{RCCR18}.
The analysis and experiments in this paper are based on the RRLS algorithm \cite{MM17}, which is implemented in the \texttt{recursiveNystrom} method \cite{VMM19} for Python.
For comparison, 
we have also implemented
RLS sampling by directly forming the matrix
$\mat{A} (\mat{A} + \lambda \Id)^{-1}$ and sampling $k$ indices with probability weighted defined by the ridge leverage scores \eqref{eq:rls}.
Even with the ridge leverage scores computed exactly in this way, the relative trace-norm error for the \textbf{Smile} example with $k = 100$ is still roughly $10^{-2}$, five orders of magnitude larger than the error due to \RPCholesky.
This indicates that the poor performance of RLS on this example is not due to errors in the approximation of the ridge leverage scores.

\textbf{Greedy pivoting}.
Following the original release of this work as an arXiv preprint, S.\ Steinerberger \cite{Ste24} pointed out that the greedy pivoting method exhibits degenerate behavior if many diagonal entries of the residual matrix are tied for the largest value, up to floating point errors.
In case of a tie, existing implementations of the greedy algorithm select columns according to the default ordering, always choosing the first column with the maximal diagonal value.
The impact of the ordering can be removed by randomly pre-shuffling the data $\{\vec{x}_1,\ldots,\vec{x}_N\}$, which we did for the \textbf{Smile} and \textbf{Spiral} data sets in \cref{fig:comparison}.
Pre-shuffling slightly improved the performance of greedy pivoting for the \textbf{Spiral} data.
We note that existing implementations of greedy pivoting, such as LAPACK's \texttt{pstrf}, do not perform this shuffling and process the data in the given order.

\section{Pseudocode for QR methods}
Pseucode for column-pivoted partial QR and block randomly pivoted QR are provided as \Cref{alg:pivoted-qr} and \Cref{alg:adaptive}.

\begin{algorithm}[t]
  \caption{Column-pivoted partial QR \label{alg:pivoted-qr}}
  \textbf{Input:} Matrix $\mat{B} \in \mathbb{C}^{M \times N}$; approximation rank $k$
  
  \textbf{Output:} Pivot set $\set{S} = \{s_1, \dots, s_k\}$; matrices $\mat{Q} \in \mathbb{C}^{M \times k}$ and $\mat{R} \in \mathbb{C}^{k \times N}$ defining approximation $\mat{\hat{B}} = \mat{Q} \mat{R}$
  \begin{algorithmic}
    \State Initialize $\mat{Q} \leftarrow \mat{0}_{M \times k}$ and $\mat{R} \leftarrow \mat{0}_{k \times N}$
    \For{$i = 1$ to $k$}
    \State Select a pivot column $s_i \in \{1, \dots, N\}$
    \Comment{See \Cref{sec:qr-pivot-rules}}
    \State $\vec{g} \leftarrow \mat{B}(:, s_i)$
    \Comment{Extract $s_i$ column of input matrix}
    \State $\vec{g} \leftarrow \vec{g} - \mat{Q}(:, 1:i-1) \mat{R}(1:i-1, s_i)$
    \Comment{Remove projection on previously chosen columns}
    \State $\vec{g} \leftarrow \vec{g} / \lVert \vec{g} \rVert$
    \Comment{Normalize}
    \State $\mat{Q}(:, i) \leftarrow \vec{g}$
    \Comment{Update approximation}
    \State $\mat{R}(i, :) \leftarrow \vec{g}^* \mat{B}$
    \EndFor
  \end{algorithmic}
\end{algorithm}

\begin{algorithm}[t]
  \caption{Block randomly pivoted partial QR (aka adaptive sampling~\cite{DRVW06}) 
  \label{alg:adaptive}}
  \textbf{Input:} Matrix $\mat{B} \in \mathbb{C}^{M \times N}$; block size $T$; approximation rank $k$ which is a multiple of $T$
  
  \textbf{Output:} Pivot set $\set{S}$; matrices $\mat{Q}$ and $\mat{R}$ defining approximation $\mat{\hat{B}} = \mat{Q} \mat{R}$
  \begin{algorithmic}
    \State Initialize $\mat{Q} \leftarrow \mat{0}_{M \times k}$, $\mat{R} \leftarrow \mat{0}_{k \times N}$, and $\set{S} \leftarrow \emptyset$
    \State Initialize $\vec{p} \leftarrow \Call{SquaredColumnNorms}{\mat{B}}$
    \For{$i = 0$ to $k/T - 1$}
    \State 
    Sample $s_{iT + 1}, \ldots, s_{iT + T} \stackrel{\rm iid}{\sim} \vec{p} / \sum_{j=1}^N \vec{p}(j)$
    \Comment{Probability prop.~to squared column norms of residual}
    \State $\set{S}' \leftarrow \Call{Unique}{\{s_{iT + 1}, \ldots, s_{iT + T}\}}$
    \State $\set{S} \leftarrow \set{S} \cup \set{S}'$
    \State $\mat{G} \leftarrow \mat{B}(:, \set{S}^{'})$
    \Comment{Evaluate columns $\set{S}'$ of input matrix}
    \State $\mat{G} \leftarrow \mat{G} - \mat{Q} \mat{R}(:, \set{S}')$
    \Comment{Remove projections on previously chosen columns}
    \State $\mat{G} \leftarrow \Call{Orth}{\mat{G}}$
    \Comment{Orthonormalize}
    \State $\mat{Q}(:, (iT + 1):(iT + |\set{S}'|)) \leftarrow \mat{G}$
    \Comment{Update approximation}
    \State $\mat{R}((iT + 1):(iT + |\set{S}'|), :) \leftarrow \mat{G}^* \mat{B}$
    \State $\vec{p} \leftarrow \vec{p} - \Call{SquaredColumnNorms}{\mat{G}^* \mat{B}}$
    \Comment{Track squared column norms of residual}
    \State $\vec{p}\leftarrow \max\{\vec{p},\vec{0}\}$ \Comment{Ensure $\vec{p}$ remains nonnegative}
    \EndFor
  \State Remove zero columns from $\mat{Q}$ and zero rows from $\mat{R}$
  \end{algorithmic}
\end{algorithm}

\section{Comparison with other methods}
\label{sec:related-work}
In this section, we give an example which shows that any Nystr\"om method needs at least $r/\varepsilon$ columns to guarantee a $(r, \varepsilon)$-approximation (\Cref{sec:lower_bound}).
Then, we analyze how many columns are needed to obtain a $(r, \varepsilon)$-approximation using the greedy method (\Cref{sec:greedy}), uniform sampling (\Cref{sec:uniform}), DPP sampling (\Cref{sec:dpp}), and RLS sampling (\Cref{sec:rls}).
The proofs in this section consolidate the existing literature, and we have attempted to streamline the derivations and obtain sharper constants.
\subsection{Lower bound}
\label{sec:lower_bound}

Here, we prove a lower bound on the number of columns needed for any Nystr\"om method to achieve a $(r, \varepsilon)$-approximation of a worst-case matrix.

\begin{theorem}[Nystr\"om lower bound \cite{DV06,GS12}] \label{thm:lower-bound}
Fix $r \geq 1$ and $\varepsilon > 0$. 
There exists a psd matrix $\mat{A} \in \complex^{N \times N}$ such that
any rank-$k$ Nystr\"om approximation with 
\begin{equation*}
k < r/\varepsilon
\end{equation*}
columns has error 
$\tr \bigl(\mat{A} - \mat{A}^{(k)}\bigr) > (1 + \varepsilon) \cdot \tr \bigl(\mat{A} - \lowrank{\mat{A}}_r\bigr)$.
\end{theorem}

\begin{proof}
We adapt the proof of \cite[Lemma~6.2]{GS12}.
We consider the matrix
\begin{equation*}
\mat{A} = \underbrace{\begin{pmatrix}
    \mat{B} \\
    & \mat{B} \\
    & & \ddots \\
    & & & \mat{B}
    \end{pmatrix}}_{Mr \times Mr}, \quad \text{where} \quad
    \mat{B} = 
    \underbrace{\begin{pmatrix}
    1 & \delta & \cdots & \delta \\
    \delta & 1 & & \vdots \\
    \vdots & & \ddots & \\
    \delta & \cdots & & 1
    \end{pmatrix}}_{r \times r}.
\end{equation*}
We consider the rank-$k$ Nystr\"om approximation 
$\mat{\hat{A}}^{(k)}$
that selects $k_1$ columns from the first block, $k_2$ columns from the second block, etc.
The approximation error satisfies
\begin{equation*}
    \frac{\tr(\mat{A} - \mat{\hat{A}}^{(k)})}
    {\tr (\mat{A} - \lowrank{\mat{A}}_r)}
    = \frac{1}{r} \sum_{i=1}^r \frac{M - k_i}{M - 1} \biggl(1 + \frac{1}{\delta^{-1} + k_i - 1}\biggr)
    \geq \frac{M - k}{M - 1} \cdot \frac{1}{r} \sum_{i=1}^r \biggl(1 + \frac{1}{\delta^{-1} + k_i - 1}\biggr)
\end{equation*}
We use the convexity of $f(x) = \frac{1}{x}$ to calculate
\begin{equation*}
    \frac{1}{r} \sum_{i=1}^r \biggl(1 + \frac{1}{\delta^{-1} + k_i - 1}\biggr)
    \geq 1 + \frac{1}{\delta^{-1} + k \slash r - 1}.
\end{equation*}
Last, we choose $M$ large enough and $\delta$ close enough to $1$ so that
\begin{equation*}
    \frac{\tr(\mat{A} - \mat{\hat{A}}^{(k)})}
    {\tr (\mat{A} - \lowrank{\mat{A}}_r)}
    > 1 + \varepsilon
\end{equation*}
for each $k < \frac{r}{\varepsilon}$.
\end{proof}

\subsection{The greedy method}
\label{sec:greedy}

The \emph{greedy method} \eqref{eq:greedy} is a column Nystr\"om approximation with a long history in numerical analysis \cite{high90c} under the name \textit{complete pivoting} or \textit{diagonal pivoting}.
The papers \cite{BJ05,FS01} popularized the method in the context of kernel computations.
Despite its popularity, however, the greedy method is known to fail when applied to certain input matrices \cite[Ex.~2.1]{high90c},
and the greedy method exhibits poor performance for most of the kernel matrices appearing in
\Cref{sec:compare,sec:krr,sec:spectral_clustering}.
Below in \Cref{thm:greedy}, we construct a worst-case matrix $\mat{A}$ that is approximated at a slow $1 - k/N$ rate using the greedy method.

\begin{theorem}[Greedy method] \label{thm:greedy}
Fix $r \geq 1$ and $\varepsilon > 0$.
Then, the greedy method has the following properties:
\begin{enumerate}[label=(\alph*)]
    \item \label{item:greedy_a} For any psd input matrix $\mat{A} \in \mathbb{C}^{N \times N}$, the greedy method with
    \begin{equation*}
    k \geq (1 - (1 + \varepsilon) \eta) N
    \end{equation*}
    columns produces an approximation satisfying $\expect \tr \bigl(\mat{A} - \mat{A}^{(k)}\bigr) \leq (1 + \varepsilon) \cdot \tr \bigl(\mat{A} - \lowrank{\mat{A}}_r\bigr)$.
    \item \label{item:greedy_b} There exists a psd matrix $\mat{A} \in \mathbb{C}^{N \times N}$ such that the greedy method with
    \begin{equation*}
        k < (1 - (1 + \varepsilon) \eta) N
    \end{equation*}
    columns has error
    $\expect \tr \bigl(\mat{A} - \mat{A}^{(k)}\bigr) > (1 + \varepsilon) \cdot \tr \bigl(\mat{A} - \lowrank{\mat{A}}_r\bigr)$.
\end{enumerate}
As usual, we have defined the relative error $\eta \coloneqq \tr(\mat{A} - \lowrank{\mat{A}}_r) / \tr(\mat{A})$.
\end{theorem}
\begin{proof}
To prove part \ref{item:greedy_a}, observe at each iteration $1 \leq i \leq k$ there are at most $N - i + 1$ nonzero entries in the diagonal of the residual matrix $\mat{A}^{(i - 1)}$,
and the largest entry is incorporated into the pivot set $\set{S}$.
Consequently,
\begin{equation*}
    \tr \mat{A}^{(i)} \leq \Bigl(1 - \frac{1}{N - i + 1}\Bigr) \tr \mat{A}^{(i-1)}
    = \frac{N - i}{N - i + 1} \tr \mat{A}^{(i-1)}.
\end{equation*}
By induction, it follows that $\tr \mat{A}^{(k)} \leq \frac{N - k}{N} \tr \mat{A}$ and
\begin{equation*}
    \frac{\tr \mat{A}^{(k)}}
    {\tr(\mat{A} - \lowrank{\mat{A}}_r)}
    \leq \biggl(1 - \frac{k}{N}\biggr) \, \frac{\tr \mat{A}}{\tr(\mat{A} - \lowrank{\mat{A}}_r)} = \frac{1 - \frac{k}{N}}{\eta}.
\end{equation*}
If $k \geq (1 - (1 + \varepsilon) \eta) N$, the right-hand side is bounded by $1 + \varepsilon$, establishing part \ref{item:greedy_a}.

To prove part \ref{item:greedy_b}, consider the matrix
\begin{equation*}
    \label{eq:worst_case_greedy}
  \mat{A} = 
  \begin{bmatrix}
    \mat{B} & & & \\
    & \mat{C} & & \\
    & & \ddots & \\
    & & & \mat{C}
  \end{bmatrix},
  \quad \text{where} \quad
  \mat{B} = 
  \underbrace{\begin{bmatrix}
    1 & & & \\
    & 1 & & \\
    & & \ddots & \\
    & & & 1
  \end{bmatrix}}_{(N - rM) \times (N - rM)},
  \quad
  \mat{C} = \underbrace{\begin{bmatrix}
      1 & 1 & \cdots & 1 \\
      1 & 1 & \cdots & 1 \\
      \vdots & \vdots & \ddots & \vdots \\
      1 & 1 & \cdots & 1
  \end{bmatrix}}_{M \times M}.
\end{equation*}
In lieu of a good tie-breaking rule, the greedy method chooses entries from the $\mat{B}$ block
before the $\mat{C}$ blocks.
An explicit calculation shows
\begin{equation*}
    \frac{\tr (\mat{A} - \mat{\hat{A}}^{(k)})}{\tr(\mat{A} - \lowrank{\mat{A}}_r)}
    = \frac{N - k}{N - M - r + 1}
    = \frac{1 - \frac{k}{N}}{\eta}
\end{equation*}
for each $k \leq N - M$.
This relative error strictly exceeds $1 + \varepsilon$
as long as $k < (1 - (1 + \varepsilon) \eta) N$, which establishes part \ref{item:greedy_b}.
\end{proof}

\subsection{Uniform sampling}
\label{sec:uniform}

Another popular column Nystr\"om approximation method is \emph{uniform sampling} \cite{WS00,DM05}.
In this method, $k$ columns are selected uniformly at random, either with or without replacement.
Theoretically and empirically, the accuracy is higher using uniform sampling without replacement, which avoids the issue of duplicate column selections \cite{KMT09a}.
Uniform sampling leads to accurate approximations when the dominant $r$ eigenvectors have mass that is spread out equally over all the coordinates (a property known as ``incoherence'') \cite{Git11}.
However, uniform sampling leads to inaccurate approximations when the dominant eigenvectors have mass that is highly concentrated on a subset of the vertices.

Uniform sampling is typically applied to a kernel matrix $\mat{A}$ with ones on the diagonal,
but \emph{diagonal sampling} \cite{FKV04} is a more general method that randomly selects pivots with probabilities proportional to $\diag \mat{A}$.
Diagonal sampling is guaranteed to produce a $(r, \varepsilon)$-approximation when the number of columns satisfies $k \geq (r-1)/(\epsilon \eta) + 1/\varepsilon$, as we will prove in \Cref{thm:diagonal}.
This bound shows that diagonal sampling accurately approximates the dominant rank-one component of a psd matrix, but it can produce inaccurate approximations of the dominant rank-$r$ component for $r > 1$.
A better option for approximating the dominant rank-$r$ component with $r > 1$ is \RPCholesky, which is equivalent to performing diagonal sampling iteratively on the residual matrix.

\begin{theorem}[Diagonal sampling \cite{FKV04}] \label{thm:diagonal}
Fix $r \geq 1$ and $\varepsilon > 0$.
Then, diagonal sampling has the following error properties:
\begin{enumerate}[label=(\alph*)]
    \item \label{item:uniform_a} 
    For any psd input matrix $\mat{A} \in \mathbb{C}^{N \times N}$, diagonal sampling with
    \begin{equation*}
    k \geq \frac{r-1}{\eta} \varepsilon^{-1} + \varepsilon^{-1}
    \end{equation*}
    columns produces an approximation satisfying $\expect \tr \bigl(\mat{A} - \mat{A}^{(k)}\bigr) \leq (1 + \varepsilon) \cdot \tr (\mat{A} - \lowrank{\mat{A}}_r)$.
    \item \label{item:uniform_b} If $r \geq 2$, there exists a psd matrix $\mat{A} \in \mathbb{C}^{N \times N}$
    such that diagonal sampling with
    \begin{equation*}
    k < \frac{r-1}{\eta} (\sqrt{\varepsilon^{-1} + 1} - 1)^2
    \end{equation*}
    columns has error $\expect \tr \bigl(\mat{A} - \mat{A}^{(k)}\bigr) > (1 + \varepsilon) \cdot \tr (\mat{A} - \lowrank{\mat{A}}_r)$.
\end{enumerate}
As usual, we have defined the relative error $\eta \coloneqq \tr(\mat{A} - \lowrank{\mat{A}}_r) / \tr(\mat{A})$.
\end{theorem}
\begin{proof}
Part \ref{item:uniform_a} improves on the earlier error bound \cite[Eq.~(4)]{FKV04}, and it is proved using a more detailed argument with the same technique.
We assume the sampling is conducted with replacement, which leads to higher error.
The trace-norm error takes the form
\begin{equation*}
  \tr \bigl(\mat{A} - \mat{A}(:,\set{S}) \mat{A}(\set{S},\set{S})^\dagger \mat{A}(\set{S},:)\bigr)
  = \tr \bigl(\mat{A}^{1 \slash 2}
  \bigl(\Id - \mat{\Pi}_{\mat{A}^{1 \slash 2}(:,\set{S})}\bigr)
  \mat{A}^{1 \slash 2}\bigr)
  = \lVert \mat{A}^{1 \slash 2}
  \bigl(\Id - \mat{\Pi}_{\mat{A}^{1 \slash 2}(:,\set{S})}\bigr) \rVert^2_{\rm F},
\end{equation*}
where $\mat{\Pi}_{\mat{A}^{1 \slash 2}(:,\set{S})}$ is the orthogonal projector onto the range of $\mat{A}^{1 \slash 2}(:,\set{S})$.
Since $\mat{A}^{1 \slash 2} \mat{\Pi}_{\mat{A}^{1 \slash 2}(:,\set{S})}$ is the optimal Frobenius norm approximation of $\mat{A}^{1 \slash 2}$ in the row space of $\mat{A}^{1 \slash 2}(\set{S}, :)$, we observe the inequality
\newcommand{\ieigenvec}{{\vec{v}_i}}
\begin{equation*}
  \bigl\lVert \mat{A}^{1 \slash 2}
  \bigl(\Id - \mat{\Pi}_{\mat{A}^{1 \slash 2}(:,\set{S})}\bigr) \bigr\rVert^2_{\rm F}
  \leq \Biggl\lVert \mat{A}^{1 \slash 2}
  - \Biggl(\sum_{j=1}^r \vec{v}_j \vec{v}_j^* \Biggr) \Biggl(\frac{\tr \mat{A}}{k} \sum_{j=1}^k \frac{\mathbf{e}_{s_j} \mathbf{e}_{s_j}^*}{\mat{A}(s_j, s_j)} \Biggr) \mat{A}^{1 \slash 2} \Biggr\rVert^2_{\rm F},
\end{equation*}
where $\mathbf{e}_{s_i}$ is the unit vector in the direction of the $i$th random pivot and $\ieigenvec$ denotes the $i$th eigenvector of $\mat{A}$.
Taking transposes and using the orthonormal basis of eigenvectors $\vec{v}_1, \ldots, \vec{v}_N$, we calculate
\begin{align*}
  & \Biggl\lVert \mat{A}^{1 \slash 2}
  - \Biggl(\sum_{j=1}^r \vec{v}_j \vec{v}_j^* \Biggr) \Biggl(\frac{\tr \mat{A}}{k} \sum_{j=1}^k \frac{\mathbf{e}_{s_j} \mathbf{e}_{s_j}^*}{\mat{A}(s_j, s_j)} \Biggr) \mat{A}^{1 \slash 2} \Biggr\rVert^2_{\rm F} \\
  &= \sum_{i=1}^r \Biggl\lVert \mat{A}^{1 \slash 2} \biggl(\Id
  - \frac{\tr \mat{A}}{k} \sum_{j=1}^k \frac{\mathbf{e}_{s_j} \mathbf{e}_{s_j}^*}{\mat{A}(s_j, s_j)} \Biggr) \vec{v}_i \Biggr\rVert^2
  + \sum_{i=r+1}^N \bigl\lVert \mat{A}^{1 \slash 2} \vec{v}_i \bigr\rVert^2
\end{align*}
Using the characterization of $\tr\bigl(\mat{A} - \lowrank{\mat{A}}_r\bigr)$ in terms of the eigenvalues $\lambda_1(\mat{A}) \geq \cdots \geq \lambda_N(\mat{A})$, we find
\begin{equation*}
    \sum_{i=r+1}^N \bigl\lVert \mat{A}^{1 \slash 2} \vec{v}_i \bigr\rVert^2 
    = \sum_{i=r+1}^N \lambda_i(\mat{A})
    = \tr\bigl(\mat{A} - \lowrank{\mat{A}}_r\bigr)
\end{equation*}
Next, observe that the random vectors
\begin{equation*}
    \mat{A}^{1 \slash 2} \Biggl(\tr \mat{A}
    \frac{\mathbf{e}_{s_j} \mathbf{e}_{s_j}^*}{\mat{A}(s_j, s_j)} \vec{v}_i \Biggr)
\end{equation*}
are independent for $i = 1, \ldots, k$,
and each vector has mean $\mat{A}^{1 \slash 2} \vec{v}_i$
and expected square norm $\tr \mat{A}$.
This allows us to calculate
\begin{equation*}
  \expect \Biggl\lVert \mat{A}^{1 \slash 2}
  \Biggl(\Id - \frac{\tr \mat{A}}{k} \sum_{j=1}^k \frac{\mathbf{e}_{s_j} \mathbf{e}_{s_j}^*}{\mat{A}(s_j, s_j)}\Biggr) \vec{v}_i \Biggr\rVert^2
  = \frac{\tr \mat{A} - \lambda_i(\mat{A})}{k}.
\end{equation*}
Summing over $i = 1, \ldots, r$ guarantees the error bound
\begin{equation*}
  \expect \tr\bigl(\mat{A} - \mat{\hat{A}^{(k)}}\bigr) \leq \frac{r - 1}{k} \tr \mat{A} + \Bigl(1 + \frac{1}{k}\Bigr) \tr(\mat{A} - \lowrank{\mat{A}}_r),
\end{equation*}
which completes part \ref{item:uniform_a} of the theorem.

To prove part \ref{item:uniform_b}, we assume the sampling is conducted without replacement and consider the $N \times N$ matrix
\begin{equation*}
\mat{A} = \begin{bmatrix}
    \mat{B} \\
    & \ddots \\
    & & \mat{B} \\
    & & & \mat{C}
    \end{bmatrix},
    \quad
    \text{where}
    \quad
    \mat{B} = \underbrace{\begin{bmatrix}
        1 & \delta & \cdots & \delta \\
        \delta & 1 & \cdots & \delta \\
        \vdots & \vdots & \ddots & \vdots \\
        \delta & \delta & \cdots & 1
    \end{bmatrix}}_{M \times M},
    \quad
    \mat{C} = \underbrace{\begin{bmatrix}
        1 & 1 & \cdots & 1 \\
        1 & 1 & \cdots & 1 \\
        \vdots & \vdots & \ddots & \vdots \\
        1 & 1 & \cdots & 1
    \end{bmatrix}}_{N - M(r-1) \times N - M(r-1)}.
\end{equation*}
Next, consider the rank-$k$ Nystr\"om approximation 
$\mat{\hat{A}}^{(k)}$
that selects $k_1$ columns from the first block, $k_2$ columns from the second block, etc.
The Schur complement of $\mat{B}$ with respect to any $k_i$ distinct columns has 
trace 
\begin{equation*}
    (M - k_i) \biggl(1 + \frac{1}{\delta^{-1} + k_i  - 1}\biggr) (1 - \delta).
\end{equation*}
Using the fact $\tr (\mat{A} - \lowrank{\mat{A}}_r) = (1 - \delta)(r - 1)(M - 1)$, we calculate
\begin{equation*}
\frac{\tr(\mat{A} - \mat{\hat{A}}^{(k)})}{\tr (\mat{A} - \lowrank{\mat{A}}_r)}
\geq \frac{1}{r-1} \sum_{i=1}^{r-1} \frac{M - k_i}{M - 1} \biggl(1 + \frac{1}{\delta^{-1} + k_i - 1}\biggr).
\end{equation*}
We use the convexity of $f(x) = \frac{1}{x}$
and the fact that
$\expect k_i 
= \frac{M k}{N}$
for $1 \leq i \leq r - 1$,
calculate
\begin{equation*}
    \frac{\expect \tr(\mat{A} - \mat{\hat{A}}^{(k)})}{\tr (\mat{A} - \lowrank{\mat{A}}_r)}
    \geq
    \frac{M - k}{M - 1}
    \biggl(1 + \frac{1}{
    \delta^{-1} + \frac{Mk}{N} - 1}\biggr).
\end{equation*}
The worst case occurs when we take $\delta = \sqrt{\frac{\varepsilon}{\varepsilon + 1}}$ and let the dimensions $M$ and $N$ grow to infinity, with fixed aspect ratio.
Then, we use the identity $\eta N = (r-1)(M - 1)(1 - \delta)$ to show that
the right-hand side converges
\begin{equation*}
    \frac{M - k}{M - 1}
    \biggl(1 + \frac{1}{
    \delta^{-1} + \frac{Mk}{N} - 1}\biggr)
    \rightarrow 1 + \frac{1}{\delta^{-1} + \frac{k \eta}{(r - 1)(1 - \delta)} - 1}
\end{equation*}
To make this quantity smaller than $1 + \varepsilon$,
uniform sampling requires at least
\begin{equation*}
    k \geq \frac{(r - 1) (1 - \delta)}{\eta} \bigl[ \varepsilon^{-1} + 1 - \delta^{-1} \bigr]
    = \frac{r - 1}{\eta} (\sqrt{\varepsilon^{-1} + 1} - 1)^2
\end{equation*}
columns.
This completes the proof of part \ref{item:uniform_b}.
\end{proof}

\subsection{Determinantal point process sampling}
\label{sec:dpp}

Determinantal point process (DPP) sampling \cite{DM21a} is a column Nystr\"om approximation method that selects a pivot set of cardinality $|\set{S}| = k$ according to the distribution
\begin{equation}
\label{eq:exact_dpp}
\prob \bigl\{ \set{S} = \{s_1, \ldots, s_k\} \bigr\} = \frac{\det \mat{A}(\set{S}, \set{S})}
{\sum_{|\set{S}^{\prime}| = k}
\det \mat{A}(\set{S}^{\prime}, \set{S}^{\prime})}.
\end{equation}
DPP sampling has nearly optimal $(r, \varepsilon)$-approximation properties, as we will show in \Cref{thm:near-optimal}.
However, implementing DPP sampling for large $k$ values remains expensive relative to peer methods \cite{AGR16,DCV19}.

In the DPP sampling literature, there is a surprising connection between \RPCholesky and $k$-DPP sampling:
when we apply $k$ steps of \RPCholesky to a rank-$k$ orthogonal projection matrix,
we obtain \emph{exactly} the same distribution as $k$-DPP sampling
\cite{gillenwater2014thesis,Pou20}.
This observation leads to one of the standard strategies for $k$-DPP sampling, based on a reduction to rank-$k$ orthogonal projection matrices \cite{kulesza2012determinantal}:
\begin{enumerate}
    \item Calculate the full eigendecomposition of the target matrix.
    \item Randomly select a set of $k$ eigenvectors with probability proportional to the product of the $k$ associated eigenvalues.
    \item Form the orthogonal projection matrix using the $k$ eigenvectors.
    \item Apply \RPCholesky to the projection matrix to obtain the set $\set{S}$.
\end{enumerate}
In step 1, the full eigendecomposition requires $\mathcal{O}(N^3)$ operations.
It is much cheaper to apply \RPCholesky directly,
which is equivalent to performing $1$-DPP sampling iteratively on the residual matrix.

\begin{theorem}[$k$-DPP sampling \cite{BW09,GS12}] \label{thm:near-optimal}
Fix $r \geq 1$ and $\varepsilon > 0$.
Then, the Nystr\"om approximation produced by $k$-DPP sampling has the following properties:
\begin{enumerate}[label=(\alph*)]
    \item \label{item:dpp_a} For any psd input matrix $\mat{A} \in \mathbb{C}^{N \times N}$, $k$-DPP sampling with
    \begin{equation*}
    k \geq r/\varepsilon + r - 1
    \end{equation*}
    columns produces an approximation satisfying $\expect \tr \bigl(\mat{A} - \mat{A}^{(k)}\bigr) \leq (1 + \varepsilon) \cdot \tr \bigl(\mat{A} - \lowrank{\mat{A}}_r\bigr)$.
    \item \label{item:dpp_b} There exists a psd matrix $\mat{A} \in \mathbb{C}^{N \times N}$ such that
    $k$-DPP sampling with 
    \begin{equation*}
    k < r/\varepsilon + r-1
    \end{equation*}
    columns leads to error $\expect \tr \bigl(\mat{A} - \mat{A}^{(k)}\bigr) > (1 + \varepsilon) \cdot \tr \bigl(\mat{A} - \lowrank{\mat{A}}_r\bigr)$.
\end{enumerate}
\end{theorem}
\begin{proof}
These results were essentially proved in \cite[Thm.~1]{BW09} and \cite{GS12}, but for completeness we provide a streamlined derivation here.
Let $\mat{A} \slash \mat{A}(\set{S}, \set{S})$ denote the Schur complement of $\mat{A}$ with respect to the coordinates $\set{S} = \{s_1, \ldots, s_k\}$.
Recall the Crabtree--Haynsworth determinant identity \cite[Lem.~1]{CH69}:
\begin{equation}
\label{eq:ch}
    (\mat{A} \slash \mat{A}(\set{S}, \set{S}))(i,j)
    = \frac{\det \mat{A}(\set{S} \cup \{i\}, \set{S} \cup \{j\})}
    {\det \mat{A}(\set{S}, \set{S})}
\end{equation}
for $i, j \notin \set{S}$.
Also recall the determinant identity \cite[Lem.~2.1]{GS12}:
\begin{equation}
\label{eq:sym}
    \sum_{|\set{S}| = k}
    \det \mat{A}(\set{S}, \set{S}) = e_k(\lambda_1(\mat{A}), \ldots, \lambda_N(\mat{A})),
\end{equation}
where
\begin{equation*}
    e_k(\lambda_1(\mat{A}), \ldots, \lambda_N(\mat{A}))
    = \sum_{|\set{S}| = k} \prod_{i \in \set{S}} \lambda_i(\mat{A})
\end{equation*}
is the $k$th elementary symmetric polynomial evaluated on the eigenvalues of $\mat{A}$.
Using \eqref{eq:ch} and \eqref{eq:sym}, we can calculate the
error of $k$-DPP sampling exactly: 
\begin{align*}
    \expect \tr(\mat{A} - \mat{\hat{A}}^{(k)}) 
    &= \frac{\sum_{|\set{S}| = k} \det \mat{A}(\set{S}, \set{S})
    \tr(\mat{A} \slash \mat{A}(\set{S}, \set{S}))}
    {\sum_{|\set{S}^{\prime}| = k}
    \det \mat{A}(\set{S}^{\prime}, \set{S}^{\prime})}
    = \frac{\sum_{|\set{S}| = k} \sum_{i \notin \set{S}} 
    \det \mat{A}(\set{S} \cup \{i\}, \set{S} \cup\{i\})}
    {\sum_{|\set{S}^{\prime}| = k}
    \det \mat{A}(\set{S}^{\prime}, \set{S}^{\prime})} \\
    &= 
    (k+1) \frac{\sum_{|\set{S}| = k+1}
    \det \mat{A}(\set{S}, \set{S})}
    {\sum_{|\set{S}^{\prime}| = k}
    \det \mat{A}(\set{S}^{\prime}, \set{S}^{\prime})} =
    (k+1) \frac{e_{k+1}(\lambda_1(\mat{A}), \ldots, \lambda_N(\mat{A}))}
    {e_k(\lambda_1(\mat{A}), \ldots, \lambda_N(\mat{A}))},
\end{align*}
Remarkably, this error is the same for a diagonal and non-diagonal matrix, so we might as well assume $\mat{A}$ is diagonal.
Next, as noted by \cite{GS12}, the function
\begin{equation*}
    f(x_1, \ldots, x_N)
    = \frac{e_{k+1}(x_1, \ldots, x_N)}
    {e_k(x_1, \ldots, x_N)}
\end{equation*}
is concave, non-decreasing in all of its arguments,
and invariant under permutations of its arguments.
Therefore, averaging together some of the arguments cannot decrease the value of $f$.
For every $(x_1, \ldots, x_N)$, it follows that
\begin{equation*}
    f(x_1, \ldots, x_N) 
    \leq f\Biggl(\underbrace{\sum_{i=1}^r \frac{x_i}{r}, \ldots, \sum_{i=1}^r \frac{x_i}{r}}_{r\,\text{times}},
    \underbrace{\sum_{i=r+1}^N \frac{x_i}{N-r}, \ldots, \sum_{i=r+1}^N \frac{x_i}{N-r}}_{N-r\,\text{times}}\Biggr)
\end{equation*}
Additionally, for every $(a, r, b, N)$,
\begin{align*}
    f\Bigl(\underbrace{\frac{a}{r}, \ldots, \frac{a}{r}}_{r\,\text{times}},
    \underbrace{\frac{b}{N-r}, \ldots, \frac{b}{N-r}}_{N-r\,\text{times}}\Bigr)
    &= f\Bigl(\underbrace{\frac{a}{r}, \ldots, \frac{a}{r}}_{r\,\text{times}},
    \underbrace{\frac{b}{N-r}, \ldots, \frac{b}{N-r}}_{N-r\,\text{times}}, 0\Bigr) \\
    &\leq f\Bigl(\underbrace{\frac{a}{r}, \ldots, \frac{a}{r}}_{r\,\text{times}},
    \underbrace{\frac{b}{N-r+1}, \ldots, \frac{b}{N-r+1}}_{N-r+1\,\text{times}}\Bigr).
\end{align*}
Consequently, $k$-DPP sampling achieves the worst-case error for the diagonal matrix
\begin{equation}
\label{eq:worst_case}
    \mat{A} = \diag\Bigl(\underbrace{\frac{a}{r}, \ldots, \frac{a}{r}}_{r\,\text{times}},
    \underbrace{\frac{b}{N-r}, \ldots, \frac{b}{N-r}}_{N-r\,\text{times}}\Bigr)
\end{equation}
in the limit as $a \rightarrow \infty$ and $N \rightarrow \infty$.
Assuming diagonal $\mat{A}$ and $k \geq r$, the dominant error arises when $k$-DPP sampling selects just $r - 1$ of the $r$ large diagonal entries.
The error is explicitly
\begin{align*}
    \lim_{N,a \rightarrow \infty} f\Bigl(\underbrace{\frac{a}{r}, \ldots, \frac{a}{r}}_{r\,\text{times}},
    \underbrace{\frac{b}{N-r}, \ldots, \frac{b}{N-r}}_{N-r\,\text{times}}\Bigr)
    &= \lim_{N,a \rightarrow \infty}
    \Biggl[
    b + \frac{a}{r} \cdot
    \frac{\binom{r}{r - 1} \binom{N - r}{k - r + 1} \bigl(\frac{a}{r}\bigr)^{r-1} \bigl(\frac{b}{N-r}\bigr)^{k-r+1}}
    {\binom{r}{r} \binom{N - r}{k - r} \bigl(\frac{a}{r}\bigr)^r \bigl(\frac{b}{N-r}\bigr)^{k-r}} \Biggr] \\
    &= \Bigl(1 + \frac{r}
    {k-r+1}\Bigr) b,
\end{align*}
whence
\begin{equation*}
    \expect \tr(\mat{A} - \mat{\hat{A}}^{(k)}) \leq \Bigl(1 + \frac{r}
    {k-r+1}\Bigr) b.
\end{equation*}
Because this is an explicit expression for the worst-case error, setting $k \geq \frac{r}{\varepsilon} + r - 1$ always guarantees an $(r, \varepsilon)$-approximation.
Conversely, when $k < \frac{r}{\varepsilon} + r - 1$,
$k$-DPP sampling fails to produce an $(r, \varepsilon)$-approximation for a diagonal matrix of the form \eqref{eq:worst_case}
with $N$ and $a$ chosen sufficiently high.
\end{proof}

\subsection{Ridge leverage score sampling}
\label{sec:rls}

Ridge leverage score (RLS) sampling \cite{AM15,CLV17,MM17,RCCR18} is a Nystr\"om approximation with a sampling distribution that is potentially more tractable than in DPP sampling. 
To perform RLS sampling with parameter $\lambda > 0$, we first introduce the vector of ridge leverage scores:
\begin{equation} \label{eq:rls}
    \vec{\ell}^{\lambda}
    = \diag (\mat{A} (\mat{A} + \lambda \Id)^{-1}).
\end{equation}
Then, we calculate the vector of sampling probabilities \cite{MM17}:
\begin{equation*}
    \vec{p} = \min\bigl\{1, f \cdot \vec{\ell}^{\lambda} \bigr\},
\end{equation*}
where $f > 1$ is the oversampling factor.
Last, we generate the coordinate set $\set{S} \subseteq \{1,\ldots,N\}$
by independently including each index $i$ with probability $\vec{p}(i)$.
The available implementations of RLS sampling \cite{CLV17,MM17,RCCR18} are all fairly complicated,
as they require selecting parameters $\lambda$ and $f$ and approximating the resulting RLS sampling distribution.
Because of these preprocessing steps, RLS sampling requires a significantly higher number of entry evaluations than \RPCholesky for a fixed approximation rank $k$.
% (\Cref{fig:entries}).  
In practice, we have found that RLS is also less reliable.

In 2017, Musco \& Musco analyzed RLS sampling and proved it produces good low-rank approximations at moderate cost.
Here is a slightly 
% modified and 
simplified version of one of their results \cite[Thm.~18]{MM17}:
\begin{theorem}[Ridge leverage score sampling: probability bound \cite{MM17}]
    For any psd matrix $\mat{A}$, there exist parameters $\lambda,f>0$ such that RLS sampling produces a column Nystr\"om approximation $\mat{\hat{A}}$ such that
    \begin{equation} \label{eq:probability_bound}
        \tr (\mat{A} - \mat{\hat{A}}) \le (1+\varepsilon) \tr(\mat{A} - \lowrank{\mat{A}}_r) \quad \text{with probability at least }1-\delta.
    \end{equation}
    The matrix $\mat{\hat{A}}$ has rank
    \begin{equation} \label{eq:musco_rank}
        k = \order\left( \frac{r}{\varepsilon} \log \left(\frac{r}{\delta\varepsilon}\right) \right).
    \end{equation}
\end{theorem}
This result is not directly comparable to our $(r,\varepsilon)$-approximation guarantees for \RPCholesky because the error bound \eqref{eq:probability_bound} controls the trace-norm error up to a failure probability rather than in expectation.
In addition, the Musco--Musco result does not have explicit constants for the approximation rank $k$.

To obtain results for RLS sampling that are directly comparable to our own, we reanalyzed RLS sampling, resulting in \Cref{thm:musco} below.
Our proof, following \cite[Thm.~3]{MM17}, uses the matrix Bernstein inequality to show that $\mathbb{P}\{\lVert \mat{A} - \mat{\hat{A}} \rVert > \lambda \}$ decreases exponentially fast as we increase the oversampling $f$.
By appropriately choosing $\lambda$ and $f$, we are able to guarantee an $(r, \varepsilon)$-approximation.
Our resulting error bounds for RLS sampling in \Cref{thm:musco} depend primarily on $r + r/\varepsilon$, similar to the bounds for DPP sampling.
In order to pass from the probability bound \eqref{eq:probability_bound} to an expectation bound \eqref{eq:1+eps}, the approximation rank $k$ acquires a logarithmic dependence on the inverse relative error $1/\eta$. 

\begin{theorem}[Ridge leverage score sampling: expectation bound, explicit constants]
\label{thm:musco}
Fix $r \geq 1$ and $\varepsilon > 0$.
For any psd input matrix $\mat{A} \in \mathbb{C}^{N \times N}$, the approximation $\mat{\hat{A}}$ produced by RLS sampling with parameters 
\begin{equation*}
    \lambda = \frac{\varepsilon}{2r} \tr (\mat{A} - \lowrank{\mat{A}}_r)
    \quad \text{and} \quad
    f = 27 \log\Bigl(\frac{4}{\eta} \Bigl(r + \frac{r}{\varepsilon} \Bigr) \Bigr)
\end{equation*}
has the following error properties:
\begin{enumerate}[label=(\alph*)]
\item \label{item:rls_a} With probability at least $1 - \varepsilon \eta / 2$,
the Nystr\"om approximation $\mat{\hat{A}}$ satisfies
\begin{equation*}
    % \frac{\lVert \mat{A} - \mat{\hat{A}} \rVert}{\tr (\mat{A} - \lowrank{\mat{A}}_r)} \leq \frac{\varepsilon}{2r},
    % \qquad
    \frac{\tr \bigl(\mat{A} - \mat{\hat{A}} \bigr)}
    {\tr (\mat{A} - \lowrank{\mat{A}}_r)} \leq 1 + \frac{\varepsilon}{2}.
    \qquad
    \rank \mat{\hat{A}} \leq 65 \Bigl(r + \frac{r}{\varepsilon}\Bigr) \log\Bigl(\frac{4}{\eta} \Bigl(r + \frac{r}{\varepsilon} \Bigr) \Bigr).
\end{equation*}
\item \label{item:rls_b} Define the truncation rank
\begin{equation*}
    k = 65 \Bigl(r + \frac{r}{\varepsilon}\Bigr) \log\Bigl(\frac{4}{\eta} \Bigl(r + \frac{r}{\varepsilon} \Bigr) \Bigr),
\end{equation*}
and set $\mat{\hat{A}}^{(k)} = \mat{\hat{A}}$ if $\rank \mat{\hat{A}} \leq k$ and $\mat{\hat{A}}^{(k)} = \mat{0}$ otherwise.
Then, $\mat{\hat{A}}^{(k)}$ is a Nystr\"om approximation with rank at most $k$, which satisfies $\expect \tr \bigl(\mat{A} - \mat{A}^{(k)}\bigr) \leq (1 + \varepsilon) \cdot \tr \bigl(\mat{A} - \lowrank{\mat{A}}_r\bigr)$.
\end{enumerate}
As usual, we have defined the relative error $\eta \coloneqq \tr(\mat{A} - \lowrank{\mat{A}}_r) / \tr(\mat{A})$.
\end{theorem}
\begin{proof}
    To prove part \ref{item:rls_a}, we start by bounding the rank of the Nystr\"om approximation.
    We observe $\rank \mat{\hat{A}} \leq |\set{S}|$,
    where $\set{S}$ denotes the set of indices sampled using RLS sampling.
    From the description of RLS sampling, $|\set{S}|$ is the sum of independent Bernoulli random variables, with expected value
    \begin{equation*}
        \mathbb{E} |\set{S}| = \sum_{i=1}^N \vec{p}(i) \leq f \sum_{i=1}^N \vec{\ell}^\lambda(i).
    \end{equation*}
    The sum of the leverage scores $\sum_{i=1}^N \ell_i^\lambda$ is bounded by
    \begin{equation}
    \label{eq:leverage}
        \sum_{i=1}^N \vec{\ell}^\lambda(i) = \tr\bigl(\mat{A}\bigl(\mat{A} + \lambda \Id\bigr)^{-1}\bigr)
        = \sum_{i=1}^N \frac{\lambda_i(\mat{A})}{\lambda + \lambda_i(\mat{A})}
        \leq r + \frac{1}{\lambda} \sum_{i>r} \lambda_i(\mat{A}) 
        \leq 2\Bigl(r + \frac{r}{\varepsilon}\Bigr),
    \end{equation}
    where we have substituted $\lambda = \varepsilon \sum_{i > r} \lambda_i(\mat{A}) / (2r)$.
    It follows that $\mathbb{E} |\set{S}| \leq t$, where
    \begin{equation*}
        t = 54 \Bigl(r + \frac{r}{\varepsilon}\Bigr) \log\Bigl(\frac{4}{\eta} \Bigl(r + \frac{r}{\varepsilon} \Bigr) \Bigr).
    \end{equation*}
    We apply Chernoff's inequality \cite[Thm.~2.3.1]{vershynin2018high} with $\delta \approx 0.199$
    to yield
    \begin{equation*}
        \mathbb{P}\Bigl\{|\set{S}| > (1 + \delta) t \Bigr\} 
        < {\rm e}^{-\mathbb{E} |\set{S}|}
        \Biggl(\frac{{\rm e}\, \mathbb{E} |\set{S}|}{(1 + \delta) t}\Biggr)^{(1 + \delta) t}
        \leq {\rm e}^{-t}
        \Biggl(\frac{{\rm e}\, t}{(1 + \delta) t}\Biggr)^{(1 + \delta) t}
        % = \Bigl(\frac{\rm e}{4}\Bigr)^{t/2}
        = {\rm e}^{-t / 54}
        \leq \frac{\varepsilon \eta}{4}.
    \end{equation*}
    With probability at least $1 - \varepsilon \eta / 4$,
    we have shown
    \begin{equation*}
        \rank \mat{\hat{A}} \leq (1 + \delta) t \leq 65 \Bigl(r + \frac{r}{\varepsilon}\Bigr) \log\Bigl(\frac{4}{\eta} \Bigl(r + \frac{r}{\varepsilon} \Bigr) \Bigr).
    \end{equation*}
    
    Next, we bound the spectral norm approximation error $\lVert \mat{A} - \mat{\hat{A}} \rVert$.
    To that end, consider the random rank-one matrices
    \begin{equation*}
        \mat{X}_i = 
        \begin{cases}
            \bigl(\frac{1}{\vec{p}(i)} - 1\bigr) \mat{B}(:, i) \mat{B}(i, :) & i \in \set{S}, \\
            - \mat{B}(:, i) \mat{B}(i, :), & i \notin \set{S},
        \end{cases}
        \quad \text{where} \quad
        \mat{B} = \mat{A}^{1/2} (\mat{A} + \lambda \Id)^{-1/2}.
    \end{equation*}
    Each matrix $\mat{X}_i$ is mean-zero for $1 \leq i \leq N$.
    The matrix $\mat{X}_i$ is exactly zero if the $i$th leverage score $\ell_i^\lambda$ is as large or larger than $1/f$.
    Otherwise, the matrix $\mat{X}_i$ is bounded from above by
    \begin{equation*}
        \lambda_{\max}(\mat{X}_i) \leq \frac{1}{\vec{p}(i)} \lVert \mat{B}(:, i) \mat{B}(i, :) \rVert = \frac{1}{\vec{p}(i)} \vec{\ell}^\lambda(i) = \frac{1}{f}.
    \end{equation*}
    and bounded from below by
    \begin{equation*}
        \lambda_{\min}(\mat{X}_i) \geq -\lVert \mat{B}(:, i) \mat{B}(i, :) \rVert = - \vec{\ell}^\lambda(i) \geq -\frac{1}{f}.
    \end{equation*}
    Hence, $\lVert \mat{X_i} \rVert \leq 1 / f$.
    We upper bound the variance of $\sum_{i=1}^N \mat{X}_i$ as
    \begin{align*}
        \sum_{i=1}^N \mathbb{E} \mat{X}_i^2
        &= \sum_{\vec{\ell}^\lambda(i) < 1/f} \Biggl(\frac{1}{\vec{p}(i)} - 1\Biggr) \mat{B}(:, i) \mat{B}(i, :) \mat{B}(:, i) \mat{B}(i, :) \\
        &\preceq \sum_{\vec{\ell}^\lambda(i) < 1/f} \frac{1}{\vec{p}(i)} \vec{\ell}^\lambda(i) \mat{B}(:, i) \mat{B}(i, :) 
        \preceq \frac{1}{f} \mat{A} (\mat{A} + \lambda \Id)^{-1}
    \end{align*}
    By increasing the largest eigenvalue of the right-hand side to $1/f$,
    we obtain a right-hand side matrix with spectral norm $1/f$ and trace at most $\frac{1}{f} \bigl(\sum_{i=1}^N \vec{\ell}^\lambda(i) + 1\bigr)$.
    Therefore, the matrix Bernstein inequality \cite[Thm.~7.7.1]{tropp2015introduction} gives
    \begin{align*}
        \mathbb{P}\Biggl\{\lambda_{\min}\Biggl(\sum_{i=1}^N \mat{X}_i \Biggr) < -\frac{1}{2} \Biggr\}
        &\leq 4 \Biggl(\sum_{i=1}^N \vec{\ell}^\lambda(i) + 1\Biggr) \exp\biggl(-\frac{3}{28} f \biggr) < \frac{\varepsilon \eta}{4}.
    \end{align*}
    Here, we have used the fact that
    \begin{equation*}
        \frac{28}{3} \log\Biggl(\frac{16}{\varepsilon \eta} \Biggl(\sum_{i=1}^N \vec{\ell}^\lambda(i) + 1\Biggr)\Biggr) \leq f = 27 \log\Biggl(\frac{4}{\eta} \Biggl(r + \frac{r}{\varepsilon} \Biggr) \Biggr),
    \end{equation*}
    which can be shown by a direct calculation using \eqref{eq:leverage}.
    With probability at least $1 - \varepsilon \eta / 4$, we have shown that $-\frac{1}{2} \Id \preceq \sum_{i=1}^N \mat{X}_i$. By multiplying both sides with $(\mat{A} + \lambda \Id)^{1/2}$, we obtain
    \begin{equation*}
        -\frac{1}{2} (\mat{A} + \lambda \Id)
        \preceq
        (\mat{A} + \lambda \Id)^{1/2} \Biggl(\sum_{i=1}^N \mat{X}_i\Biggr) (\mat{A} + \lambda \Id)^{1/2}.
    \end{equation*}
    Using the definition of $\mat{X}_i$, the right-hand side is exactly $\sum_{i \in \set{S}} \frac{1}{\vec{p}(i)} \mat{A}^{1/2}(:, i) \mat{A}^{1/2}(i, :) - \mat{A}$.
    Hence, we can simplify the expression to yield
    \begin{equation*}
        \mat{A}
        \preceq
        2 \sum_{i \in \set{S}} \frac{1}{\vec{p}(i)} \mat{A}^{1/2}(:, i) \mat{A}^{1/2}(i, :)
        + \lambda \Id.
    \end{equation*}
    By multiplying both sides with the orthogonal projection $\Id - \mat{\Pi}_{\mat{A}^{1/2}(:, \set{S})}$, we obtain
    \begin{equation*}
        (\Id - \mat{\Pi}_{\mat{A}^{1/2}(:, \set{S})}) \mat{A} (\Id - \mat{\Pi}_{\mat{A}^{1/2}(:, \set{S})})
        \preceq \lambda (\Id - \mat{\Pi}_{\mat{A}^{1/2}(:, \set{S})}).
    \end{equation*}
    The right-hand side is bounded from above by $\lambda \mat{I}$,
    so we have shown
    $\lVert (\Id - \mat{\Pi}_{\mat{A}^{1/2}(:, \set{S})}) \mat{A} (\Id - \mat{\Pi}_{\mat{A}^{1/2}(:, \set{S})}) \rVert \leq \lambda$.
    By considering the singular value decomposition for $(\Id - \mat{\Pi}_{\mat{A}^{1/2}(:, \set{S})}) \mat{A}^{1/2}$, we arrive at the spectral norm error bound
    \begin{equation*}
        \lVert \mat{A} - \mat{\hat{A}} \rVert
        = \lVert \mat{A}^{1/2} (\Id - \mat{\Pi}_{\mat{A}^{1/2}(:, \set{S})}) \mat{A}^{1/2} \rVert
        = \lVert (\Id - \mat{\Pi}_{\mat{A}^{1/2}(:, \set{S})}) \mat{A} (\Id - \mat{\Pi}_{\mat{A}^{1/2}(:, \set{S})}) \rVert \leq \lambda.
    \end{equation*}
    We can convert the spectral-norm error bound into a trace-norm error bound by calculating
    \begin{equation*}
        \tr (\mat{A} - \mat{\hat{A}}) 
        = \sum_{i=1}^N \lambda_i (\mat{A} - \mat{\hat{A}}) 
        \leq r \lambda + \sum_{i=r + 1}^N \lambda_i (\mat{A} )
        = \Bigl(1 + \frac{\varepsilon}{2}\Bigr) \cdot \tr (\mat{A} - \lowrank{\mat{A}}_r).
    \end{equation*}
    Here, we have used the Weyl monotonicity principle \cite[Thm.~8.11]{Zha11}, which guarantees that $\lambda_i(\mat{A} - \mat{\hat{A}}) \leq \lambda_i(\mat{A})$ for $1 \leq i \leq N$ since $\mat{\hat{A}} \succeq \mat{0}$.
    This completes the proof of part \ref{item:rls_a}.

    To prove part \ref{item:rls_b}, we consider the failure event which occurs when the rank or spectral norm approximation error exceeds the bounds in part \ref{item:rls_b}. Even on the failure event, the trace-norm error is bounded by
    \begin{equation*}
    \tr (\mat{A} - \mat{\hat{A}}^{(k)}) \leq \tr \mat{A}.
    \end{equation*}
    Since the failure event can only occur with probability $(\varepsilon \eta)/2$,
    we arrive at the conclusion
    \begin{align*}
        \expect \tr \bigl(\mat{A} - \mat{A}^{(k)}\bigr)
        &= \expect \bigl[ \tr \bigl(\mat{A} - \mat{A}^{(k)}\bigr) \mathds{1}\{\rm success\}\bigr]
        + \expect \bigl[ \tr \bigl(\mat{A} - \mat{A}^{(k)}\bigr) \mathds{1}\{\rm failure\}\bigr] \\
        &\leq \Bigl(1 + \frac{\varepsilon}{2}\Bigr) \cdot \tr (\mat{A} - \lowrank{\mat{A}}_r)
        + \frac{\varepsilon}{2} \cdot \tr (\mat{A} - \lowrank{\mat{A}}_r)
    \end{align*}
    This completes the proof of part \ref{item:rls_b}.
\end{proof}

% \newpage
\bibliographystyle{abbrvnat}
{\small
\bibliography{refs}}
%=================================================================
\newpage
\appendix

\end{document}